\newtheorem{proposition}{Proposition}[section]
\newtheorem{theorem}[proposition]{Theorem}
\newtheorem{corollary}[proposition]{Corollary}
\newtheorem{lemma}[proposition]{Lemma}
\theoremstyle{definition}
\theoremstyle{remark}
\newtheorem{remark}[proposition]{Remark}
\numberwithin{equation}{section}
\newcommand{\eps}{\varepsilon}
\newcommand{\R}{{\mathbb{R}}}
\newcommand{\dx}{\, \dif x}
\newcommand{\dxdt}{\dif x \dif t}
\newcommand{\dy}{\, \dif y}
\newcommand{\dyds}{\dif y \dif s}
\newcommand{\dytds}{\dif \tilde y \dif s}
\newcommand{\den}{8}
\newcommand{\dcal}{{\mathcal{d}}}
\newcommand{\Fcal}{{\mathcal{F}}}
\newcommand{\Mcal}{{\mathcal{M}}}
\newcommand{\Kcal}{{\mathcal{K}}}
\newcommand{\ca}{\textsf{(a)}}
\newcommand{\cb}{\textsf{(b)}}
\DeclareMathOperator{\divv}{div}
\DeclareMathOperator{\tr}{tr}
\newcommand{\ssubset}{\subset\joinrel\subset}
\title{On the improvement of H\"older seminorms in superquadratic Hamilton-Jacobi equations}
\author{Marco Cirant}
\begin{document}
 \maketitle
 
\begin{abstract} 
We show in this paper that maximal $L^q$-regularity for time-dependent viscous Hamilton-Jacobi equations with unbounded right-hand side and superquadratic $\gamma$-growth in the gradient holds in the full range $ q > (N+2)\frac{\gamma-1}\gamma$. Our approach is based on new $\frac{\gamma-2}{\gamma-1}$-H\"older estimates, which are consequence of the decay at small scales of suitable nonlinear space and time H\"older quotients. This is obtained by proving suitable oscillation estimates, that also give in turn some Liouville type results for entire solutions.
\end{abstract}
%
\section{Introduction}

The goal of the present paper is to address a problem of maximal $L^q$-regularity for semilinear equations of Hamilton-Jacobi (HJ) type
\begin{equation}\label{hj0}
-\partial_t u - \Delta u + h(x, t) |Du|^\gamma = f(x,t) \qquad \text{on $Q=\Omega \times (0,T)$}
\end{equation}
where $\Omega \subset \R^N$ is a bounded domain, $N\ge 1$, $h$ is a continuous function which does not change sign (and which will be assumed here to be strictly positive), $f$ is given in the Lebesgue space $L^q(Q)$, and the nonlinear gradient term is \textit{superquadratic}, i.e. $\gamma > 2$. We will show that for strong solutions $u$ in the parabolic Sobolev space $W^{2,1}_q$, it is possible to control (locally) the $L^q$-norms of $\partial_t u$, $D^2 u$ and $|Du|^\gamma$ with respect to the $L^q$-norm of $f$.

Equation \eqref{hj0} appears in several contexts, such as stochastic optimal control, where it is naturally backward in time. The regularity result we obtain here sits in between the realms of weak solutions (developed mostly below the so-called natural growth $\gamma = 2$), and viscosity solutions. It is impossible to summarize here the many, many contributions on the study of \eqref{hj0} and its quasi-linear generalizations, but it is rather well known how to deal with weak solutions, which involve the information $|Du|^\gamma \in L^1$, or viscosity ones, that structurally require $f$ to be at least continuous. The problem of maximal regularity in $L^q$, which has a genuine  flavor of \textit{strong solutions}, has been recently considered in \cite{CGpar}. In the superquadratic case $\gamma > 2$, that will be addressed here, previous results are not complete, since maximal regularity is expected to hold up to
\[
q > q_0 = (N+2)\frac{\gamma-1}{\gamma},
\]
while $q \ge (N+2)\frac{\gamma-1}{2}$ only has been covered in \cite{CGpar} (see also \cite{GoffiFrac} for results on nonlocal HJ equations). Here, we deal with the full range $q > q_0$. This extension requires a substantial improvement of the methods developed previously.

There is an elliptic counterpart of maximal regularity for HJ equations, which has been addressed in \cite{CG4, CV, G, GP}. For $q > N(\gamma-1)/\gamma$, that was conjectured to hold by P.-L. Lions a decade ago, and it has been achieved using nonlinear methods (such as the Bernstein method) \cite{CG4, G, GP} or blow-up techniques \cite{CV}. The stationary problem has deep differences with respect to the parabolic one, and ideas developed for the former have by no means obvious adaptations to the latter. For example, blow-up techniques are based on the validity of Liouville theorems. For stationary HJ equations, these are known from the eighties \cite{Lions85, SP}, and they are so powerful that they hold without any information on the growth of solutions at infinity. On the other hand, Liouville theorems for parabolic problems are known under rather restrictive conditions on the growth of the solutions \cite{SZ}, and they are even false for solutions having a linear behavior at infinity (see for example Remark \ref{betterliou}). The problem is even more delicate beyond the natural growth $\gamma > 2$, which is addressed here, where the diffusive part of the equation is typically regarded as negligible (indeed, several ``unnatural'' phenomena which are typical of first-order equations arise, see e.g. \cite{DAP, PSou}). Heuristically, many difficulties come from the very different behavior at small scales of the two operators $-\partial_t u - \Delta u $ and $-\partial_t u -  |Du|^\gamma$, and that is why one usually ``chooses'' between one of them, and regards the rest of the equation as a perturbation. Nevertheless, we wish to show that their effects are somehow intertwined at the $\alpha_0$-H\"older scale, where $\alpha_0 = \frac{\gamma-2}{\gamma-1}$.

Here, our way to maximal regularity goes indeed through the study of H\"older regularity, which is intimately connected with Liouville theorems, and in all of the three aspects our results are new, up to our knowledge. A few works, such as \cite{CSil, SV18}, addressed the problem of H\"older regularity for parabolic problems with $\gamma > 2$ and unbounded $f$, although with a perturbative viewpoint which excluded possible regularization effects coming from the diffusion, and under the sign condition $f \ge 0$ (see Remark \ref{fsign} on further comments on the issues related to the unboundedness of $f$ from below). Let us also mention \cite{CarCan}, where (first-order) counterexamples suggest that without any control on the modulus of continuity of $h$, there might be limitations on the H\"older regularity of solutions. Since our goal is to achieve $\alpha_0$-H\"older (which might be very close to Lipschitz, for large $\gamma$), we will admit estimates to depend on the modulus of continuity of $h$. First-order problems are also considered in \cite{CPT}, where Sobolev regularity of $Du$ is studied with unbounded $f$; it is worth noting that even without diffusion, the HJ equation works at different scales in space and time (that is why \cite{CPT} uses techniques a l\`a DiBenedetto, where cylinders depending on the solution itself are employed). Here, we also need to take care of the effects of the diffusion.

\smallskip

Let us now state the main result. Throughout the paper,
\[
\gamma > 2, 
\qquad h \in C(\overline \Omega \times [0,T]), \qquad 0 < h_0 \le h(x, t) \le h_1.
\]
\begin{theorem}\label{maxreg} Let $q_0 < q < N+2$. For every $K > 0$ and $Q' \ssubset Q$, there exists $C$ depending on $K, Q', Q, h, q_0$ such that if $u \in W^{2,1}_q(Q)$ solves \eqref{hj0} in the strong sense, with $\|u\|_{L^\infty(Q)}, \|f\|_{L^q( Q)} \le K$, then
\[
\|\partial_t u\|_{L^q( Q')} + \|D^2 u\|_{L^q( Q')} + \| |Du|^\gamma \|_{L^q( Q')}\le C.
\]
\end{theorem}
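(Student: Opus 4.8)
\emph{Plan.} The heart of the matter is an a priori \emph{parabolic} H\"older estimate at the exponent $\alpha_0=\frac{\gamma-2}{\gamma-1}$. The first step is to prove that, under the hypotheses of the theorem, $u$ is locally of class $C^{\alpha_0}$ with respect to the parabolic metric $d\big((x,t),(y,s)\big)=|x-y|+|t-s|^{1/2}$, and in fact with a \emph{uniform little‑H\"older modulus}: there is $\eta=\eta_K\colon(0,1]\to(0,\infty)$ with $\eta(\rho)\to 0$ as $\rho\to 0$ such that
\[
|u(x,t)-u(y,s)|\le \eta\big(d((x,t),(y,s))\big)\,d\big((x,t),(y,s)\big)^{\alpha_0}\qquad\text{for }(x,t),(y,s)\in Q''\ssubset Q.
\]
This is precisely the oscillation‑decay statement announced in the abstract, and it is the only genuinely new ingredient; granting it, the passage to maximal $L^q$-regularity is a rescaling argument, described next.

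Second, I localize and renormalize at the $\alpha_0$-scale. Fix $Q'\ssubset Q$ and, for $z=(x_0,t_0)\in Q'$ and $r>0$ small with the parabolic cylinder $Q_{2r}(z)\subset Q$, set $u_z(x,t)=r^{-\alpha_0}\big(u(x_0+rx,t_0+r^2t)-u(x_0,t_0)\big)$ on $Q_2$. A direct computation shows $-\partial_t u_z-\Delta u_z+h_z|Du_z|^\gamma=\tilde f_z$ on $Q_2$, with $h_z(x,t)=h(x_0+rx,t_0+r^2t)$ still satisfying $h_0\le h_z\le h_1$: the crucial point is that the power of $r$ in front of $|Du_z|^\gamma$ is $\gamma-2-\alpha_0(\gamma-1)=0$, so the equation is scale invariant. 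Meanwhile $\tilde f_z(x,t)=r^{2-\alpha_0}f(x_0+rx,t_0+r^2t)$ satisfies $\|\tilde f_z\|_{L^q(Q_2)}=r^{\delta}\|f\|_{L^q(Q_{2r}(z))}\le r^{\delta}K$ with $\delta=\frac{\gamma}{\gamma-1}-\frac{N+2}{q}>0$, this positivity being exactly the condition $q>q_0$. Finally, since the $C^{\alpha_0}$-seminorm is scale invariant, the estimate of Step~1 gives $\|u_z\|_{L^\infty(Q_2)}+[u_z]_{C^{\alpha_0}(Q_2)}\le C\,\eta(2r)$: the renormalized solution is uniformly \emph{small} in $C^{\alpha_0}(Q_2)$ once $r$ is small.

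Third comes interpolation and the Calder\'on--Zygmund step. By the parabolic Gagliardo--Nirenberg inequality,
\[
\|Du_z\|_{L^{\gamma q}(Q_{3/2})}\le C\,\|u_z\|_{W^{2,1}_q(Q_{7/4})}^{\theta}\,\|u_z\|_{C^{\alpha_0}(Q_{7/4})}^{1-\theta},
\]
and the matching of scaling exponents $1-\tfrac{N+2}{\gamma q}=\theta\big(2-\tfrac{N+2}{q}\big)+(1-\theta)\alpha_0$ forces $\theta=\tfrac1\gamma\in(0,1)$ --- this is the second place where the range $q>q_0$ is exactly what is needed, and it also pins down $C^{\alpha_0}$ as the correct companion space (here one also uses $q<N+2$, so that $W^{2,1}_q$ lies in the Sobolev rather than the Morrey regime and embeds into $W^{1,\gamma q}_{\mathrm{loc}}$, borderline at $q=q_0$). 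Raising to the $\gamma$-th power and using Step~2,
\[
\big\||Du_z|^\gamma\big\|_{L^q(Q_{3/2})}=\|Du_z\|_{L^{\gamma q}(Q_{3/2})}^\gamma\le C\,\big(C\eta(2r)\big)^{\gamma-1}\,\|u_z\|_{W^{2,1}_q(Q_{7/4})},
\]
so the coefficient in front of the top‑order norm is as small as we wish for $r$ small. Reading the equation for $u_z$ as a heat equation with right‑hand side $\tilde f_z-h_z|Du_z|^\gamma\in L^q$, applying interior $L^q$-regularity for $-\partial_t-\Delta$, absorbing the small top‑order term via the standard iteration over nested cylinders, and using $\|\tilde f_z\|_{L^q}\le K$ and $\|u_z\|_{L^\infty}\le C\eta(2r)$, one obtains $\|u_z\|_{W^{2,1}_q(Q_1)}\le C(K)$ for a \emph{fixed} $r=r(K,\eta)$, bounded also in terms of $\dist(Q',\partial_{\mathrm{par}}Q)$.

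Finally, undoing the scaling converts $\|u_z\|_{W^{2,1}_q(Q_1)}\le C(K)$ into bounds for $\|\partial_t u\|_{L^q}$, $\|D^2u\|_{L^q}$, $\||Du|^\gamma\|_{L^q}$ on $Q_r(z)$ with constants depending only on $K$ and the now‑fixed $r$; covering $Q'$ by finitely many such cylinders and summing $L^q$-norms finishes the proof. The only real obstacle is Step~1: establishing the $\alpha_0$-H\"older bound with a modulus that genuinely decays at small scales. This is where the competition between the diffusion and the superquadratic gradient term at the $\alpha_0$-scale must be exploited --- e.g. through De Giorgi / Krylov--Safonov oscillation estimates for the inequality $-\partial_t u-\Delta u\le f$ combined with barrier comparisons against the first‑order eikonal dynamics --- and it is also what yields the Liouville-type statements, obtained by blowing up a hypothetical non‑decaying configuration to an entire solution of the homogeneous equation with at most $\alpha_0$-growth.
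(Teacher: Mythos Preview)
Your reduction in Steps 2--4 is correct and is a clean variant of the paper's own closing argument. The paper's route from H\"older to $W^{2,1}_q$ (Proposition \ref{prop:holdertoW2q}) is slightly different: it assumes a bound on $[u]_{\alpha}$ with $\alpha = 2-\tfrac{N+2}{q} > \alpha_0$, interpolates $\|Dv\|_{L^p}$ between $\|D^2v\|_{L^q}$ and $[v]_\alpha$ with exponent $a = 1-\tfrac{q}{N+2}$, and uses that $a\gamma < 1$ to run a Simon-type iteration over nested cylinders. You instead interpolate against $[u]_{\alpha_0}$ with exponent exactly $\tfrac1\gamma$, so the top-order term appears \emph{linearly} but with a small prefactor coming from your little-H\"older modulus, and you absorb. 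Both mechanisms work; yours buys a one-shot absorption at the cost of needing smallness, the paper's buys freedom from smallness at the cost of an iteration.

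The gap is in Step 1. What you actually need is not the $C^{\alpha_0}$ bound but the \emph{little}-$C^{\alpha_0}$ bound $\eta(\rho)\to 0$, and this is not ``precisely the oscillation-decay statement announced in the abstract''. In the paper these are two separate theorems: the oscillation estimate (Proposition \ref{mainstima}) yields $[u]_{\alpha_0}\le C$ (Theorem \ref{alpha0reg}) and, independently, the Liouville property (Theorem \ref{liouville}); then a \emph{second} blow-up argument (Theorem \ref{alphareg}), using that Liouville theorem to kill the limit, upgrades $C^{\alpha_0}$ to $C^\alpha$ with $\alpha>\alpha_0$ --- which is exactly what gives your $\eta(\rho)\sim\rho^{\alpha-\alpha_0}\to 0$. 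So your black box is really two black boxes, and the second one (the improvement) is not obtainable from the first without the Liouville step in between. Your last paragraph gestures at the right ingredients but leaves the logical order unclear.

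Finally, the mechanism you sketch for the oscillation step (``De Giorgi / Krylov--Safonov for $-\partial_t u-\Delta u\le f$ plus eikonal barriers'') is not how the paper proceeds. The paper's oscillation estimate is obtained by the nonlinear adjoint method: one represents $w(0,0)$ via the dual Fokker--Planck density $m$ starting from $\delta_0$, and controls the space oscillation by ``bending'' $m$ to a competitor starting from $\delta_{y_0}$, so that the difference $w(y_0,0)-w(0,0)$ is bounded by Lagrangian, source, and boundary-flux terms that can all be made small by choosing the outer cylinder large. This is what allows one to reach the precise exponent $\alpha_0$; a Krylov--Safonov argument would give some small H\"older exponent, not $\alpha_0$.
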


The statement is also true when $q \ge  N+2$, see Remark \ref{beyondNplus2}. As we previously announced, the proof of this result is a consequence of some H\"older regularity properties, which we state here as follows:
\begin{theorem}\label{alpharegintro} Let $q_0 < q < N+2$. For every $K > 0$ and $Q' \ssubset Q$, there exists $C$ depending on $K, Q', Q, h, q_0$ such that if $u \in W^{2,1}_q(Q)$ solves \eqref{hj0} in the strong sense, with $\|u\|_{L^\infty(Q)}, \|f\|_{L^q(Q)} \le K$, then
\[
\frac{|u(x,t)-u(x',t')|}{|x-x'|^\alpha + |t-t'|^{\sfrac\alpha 2}} \le C
\]
for all $(x,t),(x',t') \in Q'$, where $\alpha = 2- \frac{N+2}{q}$.

If $q = q_0$, then the same conclusion holds, but $K$ might depend on $f$ not only through its norm (in fact, $K$ is independent of $f$ whenever $f$ varies in a uniformly integrable subset of $L^{q}(Q)$).
\end{theorem}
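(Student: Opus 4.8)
The plan is a blow-up (compactness--contradiction) argument, which reduces the H\"older estimate to the boundedness, at all small scales, of a nonlinear H\"older quotient and then closes the compactness step by Liouville theorems for entire solutions. Write $Q_r(z_0)=B_r(x_0)\times(t_0-r^2,t_0+r^2)$ for the parabolic cylinder at $z_0=(x_0,t_0)$, and note $\alpha=2-\frac{N+2}{q}\in(\alpha_0,1)$, with $\alpha=\alpha_0$ precisely when $q=q_0$ and $\alpha\to1$ as $q\to N+2$. Controlling the joint quotient in the statement is equivalent to the oscillation bound $\operatorname{osc}_{Q_r(z_0)}u\le C_0\,r^\alpha$ on all small cylinders; fixing $Q'\ssubset Q''\ssubset Q$, it therefore suffices to bound $\Phi(u;z_0,r):=r^{-\alpha}\operatorname{osc}_{Q_r(z_0)}u$ uniformly over $z_0\in Q''$, $r\le\rho_0$, and all admissible $u$ (one may equally track spatial and temporal oscillations separately, which the term $|Du|^\gamma$ couples nonlinearly --- whence ``nonlinear space and time quotients'').

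Suppose this supremum is infinite. Since $u\in W^{2,1}_q\hookrightarrow C^{\alpha,\alpha/2}_{\loc}$, the quantity $\Phi(u;\cdot,\cdot)$ is bounded for each fixed $u$, so one can select solutions $u_n$ (with $\|u_n\|_{L^\infty(Q)},\|f_n\|_{L^q(Q)}\le K$) and near-maximizers $(z_n,r_n)$ --- localized, as is standard, so that the rescaled cylinders exhaust $\R^{N+1}$ --- with $L_n:=\Phi(u_n;z_n,r_n)\to\infty$ and $\Phi(u_n;\zeta,\rho)\le 2L_n$ on the relevant cylinders. Since $L_n r_n^\alpha=\operatorname{osc}_{Q_{r_n}(z_n)}u_n\le 2K$, necessarily $r_n\to0$. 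Set
\[
v_n(y,s)=\tfrac{1}{L_n r_n^\alpha}\bigl(u_n(z_n+r_ny,\,t_n+r_n^2 s)-u_n(z_n)\bigr).
\]
Then $\operatorname{osc}_{Q_1}v_n=1$, and by near-maximality $\operatorname{osc}_{Q_\rho(\eta)}v_n\le(1+o(1))\rho^\alpha$ on cylinders of radius $\rho\le\rho_0/r_n\to\infty$; hence $(v_n)$ is locally uniformly bounded and equi-$C^{\alpha',\alpha'/2}$ for every $\alpha'<\alpha$, so along a subsequence $v_n\to v$ locally uniformly, with $v$ nonconstant ($\operatorname{osc}_{Q_1}v=1$) and $\operatorname{osc}_{Q_R}v\le C\,R^\alpha$ on all of $\R^{N+1}$.

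A direct computation shows that $v_n$ solves
\[
-\partial_s v_n-\Delta v_n+h_n\,\mu_n^{\gamma-1}\,|Dv_n|^\gamma=g_n,\qquad \mu_n:=L_n\,r_n^{\alpha-\alpha_0},
\]
with $h_n$ valued in $[h_0,h_1]$, and, since $\alpha$ is exactly the exponent at which $f\in L^q$ scales neutrally under the parabolic rescaling, $\|g_n\|_{L^q(Q_R)}=L_n^{-1}\|f_n\|_{L^q(Q_{Rr_n}(z_n))}\le K/L_n\to0$. This is where $q>q_0$ enters: because $\alpha>\alpha_0$ and $r_n\to0$, $\mu_n=L_n r_n^{\alpha-\alpha_0}$ is a competition between a vanishing scale factor and the diverging $L_n$, and one distinguishes two regimes. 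If $\mu_n$ stays bounded (subsequence $\mu_n\to\mu_\infty\ge0$), then --- using a priori gradient integrability for $v_n$ coming from the favorable sign $h_n\ge h_0>0$ (an integral Bernstein / testing bound, robust under the rescaling since $h_n$ stays in $[h_0,h_1]$), which is enough to pass to the limit in the nonlinear term --- $v$ is an \emph{entire} (sub)solution of $-\partial_s v-\Delta v+\mu_\infty^{\gamma-1}h_\infty|Dv|^\gamma=0$ (the heat equation when $\mu_\infty=0$) with $\operatorname{osc}_{Q_R}v=O(R^\alpha)=o(R)$, and a Liouville theorem forces $v$ constant, contradicting $\operatorname{osc}_{Q_1}v=1$. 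If instead $\mu_n\to\infty$, the superquadratic term dominates: one performs a second, \emph{intrinsic} rescaling tuned to the first-order operator $-\partial_t+h|D\cdot|^\gamma$ so that the gradient term has coefficient $1$; this forces an anisotropically short time scale under which the Laplacian acquires a vanishing coefficient ($\sim\mu_n^{-(\gamma-1)}$), so the limit is an entire, sublinear-oscillation solution of $-\partial_s w+h_\infty|Dw|^\gamma=0$, again constant by Liouville --- contradiction.

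I expect the genuinely hard part to be the gradient-dominant regime $\mu_n\to\infty$ together with the Liouville theorems it requires: parabolic Liouville results for $\gamma>2$ are known to fail for solutions with linear growth (cf.\ Remark~\ref{betterliou}), so the whole scheme rests on the strict sublinearity $\alpha<1$ (equivalently $q<N+2$), and one must land the limiting entire solution squarely inside the growth window where Liouville holds --- this is precisely the role of the oscillation estimates, which also yield those Liouville statements. A secondary subtlety is the endpoint $q=q_0$: there $\alpha=\alpha_0$, $\mu_n\equiv L_n$ diverges unconditionally, and both the right-hand side and the gradient bound operate at their critical summability, so one needs $\{f\}$ uniformly integrable in $L^{q_0}(Q)$ to recover compactness --- the borderline gradient estimate then depends on the equi-integrability modulus rather than on $\|f\|_{L^{q_0}}$, which accounts for the stated weaker dependence of the constant.
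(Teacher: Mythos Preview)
Your overall blow-up framework and the treatment of the regime $\mu_n$ bounded are close to the paper's second step (Theorem~\ref{alphareg}), where the $\alpha_0$-H\"older bound is taken as an \emph{input} precisely to force $\theta_n=\mu_n\le K^{\gamma-1}$ and then pass to the limit using the Liouville Theorem~\ref{liouville}. One correction there: compactness of $Dv_n$ in $L^{\gamma q}$ does not follow from ``integral Bernstein / testing'' (that only gives $|Dv_n|^\gamma\in L^1_{\loc}$); the paper obtains the needed $W^{2,1}_q$ bounds from Proposition~\ref{prop:holdertoW2q}, which converts the $\alpha$-H\"older control coming from near-maximality into second-order integrability.

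The substantive divergence is in how you handle $\mu_n\to\infty$, which is exactly the $q=q_0$ endpoint. The paper does \emph{not} pass to a limit and invoke a first-order Liouville theorem here. Instead, the $\alpha_0$-H\"older estimate (Theorem~\ref{alpha0reg}) is proved by a \emph{direct quantitative} argument: one rescales via the first-order scaling $(x,t)\mapsto(\bar x_n+r_ny,\ \bar t_n+r_n^\gamma M_n^{1-\gamma}s)$, which keeps $-\partial_t+|D\cdot|^\gamma$ invariant and sends the viscosity $\sigma_n\to0$, and then applies the oscillation estimates of Proposition~\ref{mainstima} to $w_n$ itself (for carefully chosen $z,\tau,R$), contradicting the normalization $|w_n(y_0,0)-w_n(0,0)|=1$ or $|w_n(0,1)-w_n(0,0)|=z$ \emph{before} any limit is taken. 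A key device is the nonlinear weighted seminorm $\llbracket\cdot\rrbracket_{\alpha_0,z;Q}$ (with the auxiliary parameter $z$ and the distance weight $\dcal_{\alpha_0}$), engineered so that the space and time quotients scale compatibly with the first-order operator; the standard parabolic quotient you use is invariant under the full operator and therefore cannot detect the improvement at the $\alpha_0$ scale.

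Your proposed route---second intrinsic rescaling plus Liouville for $-\partial_s w+h_\infty|Dw|^\gamma=0$---runs into genuine obstacles that the paper's strategy is designed to avoid: with $\sigma_n\to0$ the $W^{2,1}_q$ compactness is lost, so the limit is at best a viscosity solution and the representation formulas underlying Proposition~\ref{mainstima} (hence Theorem~\ref{liouville}) are no longer available in the limit; the paper only proves Liouville for the \emph{viscous} equation, and explicitly flags the degenerate case as merely formal in the remark after Theorem~\ref{liouville}. Moreover, after your second rescaling the $\alpha$-parabolic growth of $v_n$ becomes anisotropic in the new variables, and it is not clear the resulting bound lands in the sublinear window where any Liouville statement could apply. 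In short, the ``hard part'' you correctly identify is not closed by a second blow-up; it is closed by the quantitative oscillation bounds, which are then \emph{also} what yield the Liouville theorem used in the easy case.
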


This theorem will be proven in two steps. First, for $q = q_0$ in Theorem \ref{alpha0reg}. Then, for $q > q_0$ in Theorem \ref{alphareg}. While the general strategy will be presented in detail in the second part of the introduction, we already mention that the first step is the core of the paper. When $q = q_0$, then $\alpha = \alpha_0= \frac{\gamma-2}{\gamma-1}$. We may then regard Theorem \ref{alpharegintro} as a parabolic version of the $\alpha_0$-H\"older regularity obtained in \cite{DAP} for stationary problems (actually for general divergence-form equations, for subsolutions, and up to the boundary...). The intermediate passage between the two steps $q=q_0$ and $q > q_0$ will be the Liouville Theorem \ref{liouville}.

The techniques used here have their roots in \cite{CGpar} (see also \cite{CG2, Gomesbook}), in \cite{CV} and in \cite{CSil}. However, new and crucial ingredients are introduced, such as the use of ``nonlinear'' H\"older seminorms, and the study of their decay (or improvement) in smaller and smaller cylinders. All the estimates obtained here are \textit{local} in nature, but they could be extended up to the parabolic boundary of $Q$ under suitable boundary conditions; we will not pursue their derivation here. Moreover, more general nonlinear functions of $Du$ are admissible, see for instance Remark \ref{remass}. On the other hand, the presence of $(x,t)$-dependent diffusions cause additional difficulties, and they will not be treated here.

\smallskip

Let us mention that we do not know whether maximal regularity holds below the exponent $q_0$, as no counterexamples like in the stationary case are at this stage available. Nevertheless, we strongly believe that it is not possible to have maximal regularity below $q_0$, and $L^\infty$ estimates might even fail ($f$ need not be bounded below here). Besides the fact that $q_0$ is the parabolic analogue of the sharp stationary exponent, $q_0$ is ``critical'' (with respect to scaling properties of \eqref{hj0}), and H\"older estimates ``see'' properties of $f$ beyond its norm... 
Finally, we do not know at the moment whether it is possible or not to obtain \textit{universal} estimates, as in parabolic problems with nonlinear zero-th order term \cite{PQS}, or for stationary HJ equations \cite{CV}; our H\"older and maximal regularity bounds depend indeed the $L^\infty$-norm of $u$. 

\smallskip

Before entering into the details of the proofs, let us say a few words on the structure of the paper. Section \ref{sec:osc} will be devoted to the proof of crucial oscillation estimates for HJ equations. In Section \ref{sec:hol}, these will be used to obtain the first step of $\alpha_0$-H\"older regularity, while in Section \ref{sec:lio} we will show how to derive a Liouville type result. In Section \ref{sec:hol2} the full range of H\"older regularity will be obtained (that is, for any $\alpha > \alpha_0$), and in Section \ref{sec:max} we will finally reach the maximal regularity result. The Appendix \ref{onthefp} contains several useful results on linear equations which will be used thoroughly.

\medskip
\textbf{Strategy of the proofs.} The first main idea to achieve $W^{2,1}_q$ estimates can be described as follows; standard parabolic embeddings read
\[
W^{2,1}_q \hookrightarrow W^{1,0}_p \hookrightarrow C^{\alpha, \sfrac\alpha2},
\]
where $\frac1p = \frac1q - \frac1{N+2}$ and $\alpha=2-\frac{N+2}q$ (for $q > \frac{N+2}2$). In fact, since we consider solutions in $W^{2,1}_q$, we look for a priori estimates (at all scales, from H\"older to $W^{2,1}_q$). If $q > q_0$, then a priori bounds can be obtained by following the arrows in the ``opposite way''; if one is able indeed to control $|Du|^\gamma$ in $L^{q}$, then standard parabolic regularity yields $W^{2,1}_q$ estimates. Furthermore, if one has $\alpha$-H\"older bounds, then an additional interpolation argument yields back $W^{2,1}_q$ estimates. This step is carried out by Proposition \ref{prop:holdertoW2q}. We stress that this is possible only if $q > q_0$. In other words, the ground $\alpha$-H\"older control allows to treat the HJ equation in a perturbative manner, and the effect of the nonlinear Hamiltonian term $|Du|^\gamma$ can be absorbed by the usual gain of regularity coming from the linear part of the equation.

\smallskip

The second main ingredient is the improvement of the $\alpha_0$-H\"older bounds to $\alpha$-H\"older bounds, which is done by Theorem \ref{alphareg}. Here, we follow an idea developed in \cite{CV} for stationary problems: if there are sequences of solutions whose $\alpha$-H\"older seminorm explodes, then a suitable rescaling argument allows to construct a nonconstant ``entire'' solution (on $\R^N \times (0,\infty)$), which is in contradiction with a Liovuille Theorem, forcing in fact such a solution to be constant. There are a few points that are worth noticing: first, the rescaling here preserves the linear structure ( $(x,t) \mapsto (rx, r^2t)$ ), but the Hamiltonian might not vanish in the limit problem. For this reason we need a Liouville theorem for homogeneous HJ equations. Such a Liouville theorem cannot hold if the solution has linear growth in the $x$ variable (see Remark \ref{betterliou}), that is why we need to start with $\alpha_0$-H\"older bounds, that give in the limit solutions with sublinear growth.

\smallskip

The Liouville property (Theorem \ref{liouville}) and the $\alpha_0$-H\"older bounds (Theorem \ref{alpha0reg}) are the main novelties in this paper, and they are deduced as follows. It is classical that H\"older regularity can be obtained whenever the oscillation of $u$ in a cylinder is smaller (by a fixed factor) than the oscillation of $u$ itself in a larger cylinder (at all scales). This principle, which has been used in the context of HJ equations in \cite{CSil}, typically leads to H\"older regularity with rather implicit exponent, which might be actually very close to zero. Here, we need the precise exponent $\alpha_0$ (which approaches $1$ as $\gamma \to \infty$). Therefore, we focus on the following principle: 
\begin{gather*}
\textit{the $\alpha_0$-H\"older seminorm in a cylinder is strictly smaller than} \\
\textit{the $\alpha_0$-H\"older seminorm in a larger cylinder,}
\end{gather*}
which can be seen as an ``improvement of $\alpha_0$-H\"older" regularity. 

Practically, we will rescale sequences of solutions having (by contradiction) growing $\alpha_0$-H\"older seminorm in a way that it is exactly $1$ in a cylinder $Q_0$ of radius one, while the global $\alpha_0$-H\"older seminorm remains bounded by a constant. We then prove that the control on the $\alpha_0$-H\"older seminorm at the boundary of a \textit{suitable} large cylinder $Q_1$ implies that the oscillation of $u$ on $Q_0$ needs to be strictly smaller than $1$, which is impossible. To perform this fundamental step, we will need to introduce a nonstandard ``nonlinear'' $\alpha_0$-H\"older seminorm, which will be denoted below by $\llbracket \cdot \rrbracket_{\alpha_0}$; indeed, if we rescale in a way that the full operator
\[
-\partial_t u - \Delta u + |Du|^\gamma,
\]
is preserved, then the standard $\alpha_0$-H\"older parabolic seminorm remains invariant. We will then rescale in a way that the transport part $-\partial_t u + |Du|^\gamma$ is kept unchanged, while the space-time H\"older quotients
\begin{equation}\label{stranaholder}
\llbracket u \rrbracket_{\alpha_0} \approx \max\left\{ \, \frac{|u(x,t) - u(\bar x,t)|}{|x-\bar x|^\alpha}, \, \left( \frac{|u(x,t) - u(x,\bar t)|}{|t-\bar t|^{\sfrac{\alpha}2}}\right)^{\sfrac2\gamma} \right\}
\end{equation}
are renormalized. Note that in this way the rescaled equation has vanishing viscosity. The oscillation core estimate which is then used to conclude is contained in Proposition \ref{mainstima}. Since the proof of the proposition is rather technical, we try to sketch here the main ideas involved (and briefly describe how to handle the vanishing viscosity, which deteriorates second-order regularization effects).

As $u$, and its scaling $w$, solve an HJ equation, we have at our disposal a representation formula which arises in stochastic optimal control theory: if for simplicity $h$ is constant, then
\[
w(x,0) = \inf_{b_s} \, \mathbb E \int_0^\tau \ell |b_s|^{\gamma'} + f(X_s, s) ds + \mathbb E w(X_\tau, \tau),
\]
where the infimum is taken among stochastic trajectories $d X_s = b_s ds + \sqrt{2 \sigma} dB_s$ originating from $(x,0)$ and $\tau$ is the first exit time from the cylinder $Q = B_R \times (0,T)$. In PDE terms, the previous formula reads
\begin{equation}\label{valuefunction}
w(x,0) =  \iint_Q \ell |b|^{\gamma'} m + f m \dyds +  \int_{B_R} w(T) m(T) \dy - \iint_{\partial B_R  \times (0,T) } w Dm \cdot \nu \dyds,
\end{equation}
where $b = - h \gamma |Dw|^{\gamma-2}Dw$, $\nu$ is the outward normal vector to $\partial B_R$, and
\begin{equation}\label{fokkeri}
\begin{cases}
\partial_s m - \sigma \Delta m - \divv(b m) = 0 & \text{on $Q$}, \\
m|_{\partial B_R \times (0,\tau)} = 0, \ \  m(0) = \delta_x.
\end{cases}
\end{equation}
Formula \eqref{valuefunction} has been used extensively to deduce regularity properties of $w$. Besides its interpretation in optimal control, it can be obtained using a sort of duality between \eqref{hj0} and \eqref{fokkeri}; such a duality is at the core of the so-called (nonlinear) adjoint method, which allows to derive regularity of $w$ by regularity properties of $m$. This strategy has been proposed in \cite{Evans} for HJ equations. In particular, H\"older seminorms of $u$ are related to variations with respect to $x$ in \eqref{valuefunction}, that necessarily involve variations of $m$ with respect to $x$, by the presence of $\iint f m$. Unfortunately, the only available information on $m$ is a bound in $L^{q_0'}$ (see Lemma \ref{lemmamq0}); bounds on the derivatives of $m$ are possible in smaller spaces only, and that is why H\"older and maximal regularity are not achieved up to $q = q_0$ in \cite{CGpar}. For the reader who is experienced with the adjoint method for HJ equations, the main challenge here is to ``differentiate'' $u$ in \eqref{valuefunction} with respect to $x$ without ``differentiating'' $m$ in $\iint f m$ ! This is a substantial step which is developed in the present paper. 

\smallskip
To compare $w(x,0)$ and $w(x+h,0)$, we just ``bend'' $m$ to a new $\tilde m$, which starts from $\delta_{x+h}$ and coincides with $m$ at final time. Such new $\tilde m$ is used in the representation formula for $w(x+h,0)$ (since the new $\tilde m$ is ``suboptimal'', the equality becomes an inequality in \eqref{valuefunction}), which is then compared with \eqref{valuefunction} for $w(x,0)$. Note that $w(x+h,0) - w(x,0)$ is renormalized to be $1$, and there is no need to divide by $h$. The discrepancies in the two formulas mainly involve the difference between the two Lagrangian terms $\iint |b|^{\gamma'}m$, the so-called running costs $\iint f m$, and the boundary contributions $ \iint_{\partial B_R  \times (0,T) } w Dm \cdot \nu$. These depend on the dimensions $R,T$ of the large cylinder; loosely speaking, Lagrangian terms can be made small by choosing $T$ large, and $\iint f m$ vanishes since the $L^{q_0}$-norm of $f$ can be made small at small scales. The boundary terms are definitely the most delicate ones; the key point is to realize that $ \iint_{\partial B_R  \times (0,T) } Dm \cdot \nu$, which represent the total density leaving the cylinder from its lateral boundary, is small if $R$ is large, and it can compensate $w$, which might be large (if the large cylinder is chosen carefully enough). Similar arguments allow to control the oscillation of $w$ in time. 

\smallskip
A few comments are now in order. As we said, the scaled operator has vanishing viscosity
\[
-\partial_t u - \sigma \Delta  u + |Du|^\gamma, \qquad \sigma = o(1),
\]
which allows to exploit the first order regularization effects, but of course brings a deterioration of linear regularity. Nevertheless, by the particular way in which space-time is scaled we can still use the (small) linear contribution: the deterioration of the $L^{q_0'}$-norm of $m$ is compensated by rapid vanishing of the $L^{q_0}$-norm of $f$, so that the whole term can $\iint f m$ be controlled in the scaling process. To carry out the whole procedure, we stress again that we need to work with ad-hoc H\"older quotients, and that both the linear and nonlinear part of the equation play a major role in regularization; the argument is by no means perturbative. Finally, the reader will see that we need an additional parameter $z$ in the ``nonlinear'' H\"older seminorm to fine tune the argument. This is mainly because we are able to shrink the \textit{time} H\"older seminorm from $z$ to $z/2$ only if $z$ is large, while $z$ plays basically no role when shrinking seminorms in \textit{space} (provided that cylinders are fat enough). Since our goal is to obtain local estimates, suitable distance functions will appear also in the computations, but this will be just an annoying technical point.

\medskip
\textbf{Notations.}

\begin{itemize}
\item $q_0 = \frac{N+2}{\gamma'}$, $\alpha_0 = \frac{\gamma-2}{\gamma-1}$. Throughout the paper, $\gamma > 2$, $q_0 \le q < N+2$ and $\alpha = 2-\frac{N+2}q$, unless otherwise specified. Note that $q$ is always larger than $1+\frac N 2$.
\item $|\cdot|$ will denote the usual Euclidean distance in $\R^N$; $B_r(x) = \{y \in \R^N : |x-y| < r\}$. $d(x, \Omega)$ be the Euclidean distance from $x \in \R^N$ to the set $\Omega \subset \R^N$. For a cylinder $Q = \Omega \times (a,b)$, we will use two different parabolic distances from the (backward parabolic) boundary $ \partial^+ Q = \partial \Omega \times (a,b) \cup \Omega \times \{b\}$:
\[
d{\big((x,t), \partial^+ Q \big)} = d(x, \partial \Omega) + |b-t|^{\sfrac12}, \qquad \dcal_\alpha{\big((x,t), \partial^+ Q \big)} = d^\alpha(x, \partial \Omega) + |b-t|^{\sfrac\alpha\gamma}
\]
($\dcal_\alpha$ will be often used without the subscript $\alpha \in (0,1)$ for brevity).
\item Parabolic cylinders: 
$Q_{r,t} = B_r(0) \times (0,t)$, $Q_{r} = B_r(0) \times (0,r^2)$.
\item For $Q = \Omega \times (a, b)$, $0 < \alpha < 1$, $c > 0$, we define the usual parabolic H\"older seminorm
\[
[u]_{\alpha; Q} = \sup_{(x,t), (\bar x, \bar t) \in Q} \frac{|u(x,t) - u(\bar x, \bar t)|}{(|x-\bar x| + |t-\bar t|^{\sfrac12})^{\alpha}} \, ,
\]
and its wighted version
\[
[u]^{c}_{\alpha; Q} = \sup_{(x,t), (\bar x, \bar t) \in Q} [\min\{ d((x,t), \partial^+ Q ), d((\bar x, \bar t), \partial^+ Q ) \} ]^{c} \frac{|u(x,t) - u(\bar x, \bar t)|}{(|x-\bar x| + |t-\bar t|^{\sfrac12})^{\alpha}}. 
\]
Finally, for any $z > 0$,
\begin{align*}
\llbracket u\rrbracket^x_{\alpha; Q} = & \sup_{(x,t), (\bar x,t) \in Q} \min\{ \dcal_\alpha{((x,t), \partial^+ Q )}, \dcal_\alpha{((\bar x,t), \partial^+ Q )} \} \frac{|u(x,t) - u(\bar x,t)|}{|x-\bar x|^\alpha}, \\
\llbracket u\rrbracket^t_{\sfrac{\alpha}2; Q} = & \sup_{(x,t), (x,\bar t) \in Q} [\min\{ \dcal_\alpha{((x,t), \partial^+ Q )}, \dcal_\alpha{(( x, \bar t), \partial^+ Q )} \} ]^{\sfrac{\gamma}2} \frac{|u(x,t) - u(x,\bar t)|}{|t-\bar t|^{\sfrac{\alpha}2}}, \\
\llbracket u \rrbracket_{\alpha, z; Q} = & \max\left\{ \llbracket u\rrbracket^x_{\alpha; Q} \, ,  \, (z^{-1}\llbracket u\rrbracket^t_{\sfrac{\alpha}2; Q})^{\sfrac2{\gamma} }\right \}.
\end{align*}
\item $\|\cdot\|_{L^q(X)}$ be the usual norm on the Lebesgue space $L^q(X)$. Parabolic Sobolev spaces:
\[
W^{2,1}_q(X) = \{ \text{measurable $u$} : \partial^r_t D^\beta_x u \in L^q(X) \text{ for any } |\beta| + 2r \le 2\}.
\]
\item Recall that a set $\Fcal \subset L^q(X)$ is $L^q$-uniformly integrable if for any $\eps > 0$, there exists $\delta > 0$ such that
\[
\|f\|_{L^q(E\cap X)} < \eps \qquad \text{whenever $f \in \Fcal$ and $\int_{E\cap X} 1 < \delta$.}
\]
\item $C, C_1, ...$ will denote positive constants, whose value may vary from line to line. Unless otherwise specified, they may depend on $N, \alpha, \gamma, q, Q$ and $h$ (in particular on $h_0, h_1$ and the modulus of continuity of $h$), but \textit{not} on $u$ and $f$.
\end{itemize}

\medskip
\textbf{Acknowledgements.} The author was partially supported by the King Abdullah University of Science and Technology (KAUST) project CRG2021-4674 ``Mean-Field Games: models, theory, and computational aspects'', by the INDAM-GNAMPA grant ``Propriet\`a quantitative e qualitative per EDP non lineari con termini di gradiente'' (2022).

\section{Oscillation estimates}\label{sec:osc}

This section is devoted to the proof of the oscillation estimates, which will be used to show that certain H\"older seminorms decay in smaller and smaller cylinders. These will be also crucial to deduce the Liouville theorem for entire solutions to HJ equations. 

Throughout this section, $w \in W^{2,1}_q(Q_{R+1,\tau})$ (for some $q \ge q_0$) solves on $Q_{R+1,\tau}$ the differential inequalities
\begin{align}
-\partial_s w - \sigma \Delta w + h_0 |Dw|^\gamma \le g, \label{hja} \\
-\partial_s w -  \sigma \Delta w + h_1 |Dw|^\gamma \ge g \label{hjb}
\end{align}
 in the strong sense, for a diffusion factor $0 \le \sigma \le 1$, and $0 < \underline h \le h_0, h_1 \le \bar h$. The function $g$ is given in $L^{q_0}(Q_{R+1,\tau})$.

\begin{proposition}\label{mainstima} Assume that $R^2 \ge \sigma \tau$, and that for some $0<\alpha <1$, $z \ge 1$,
\[
\sup_{(y,s), (y',s) \in \overline{Q}_{R, \tau}} \frac{|w(y,s) - w(y',s)|}{|y-y'|^\alpha} \le 3, \qquad 
\sup_{(y,s), (y, s') \in \overline{Q}_{R, \tau}} \frac{|w(y,s) - w(y, s')|}{|s-s'|^{\alpha/2}} \le 3^{\frac\gamma2} z.
\]
Then there exist $f^0, c_1, C_2, C_3$ depending on $\underline h, \bar h, \alpha, q_0$, and $\Kcal \ge 0$, such that if
\begin{equation}\label{condstima}
\sigma^{-\gamma'\frac{N+1}{N+2}} \|g\|_{L^{q_0}(Q_{R,\tau})}  \le f^0, \qquad z \frac{R^\alpha + \tau^{\frac\alpha2}}{R} \le c_1,
\end{equation}
then for any $|y_0| \le 1$,
\begin{align}
& |w(0,0)-w(0,\tau)| +  \Kcal  \le C_2\left( \tau^{\frac\alpha2} + \tau^{\frac{\alpha_0}2}  +  \tau^{\frac{\alpha}{\gamma-\alpha(\gamma-1)}} +  \tau \frac{R^\alpha + \tau^{\frac\alpha2}}{R}z  \right), \label{test0} 
 \\ & w(y_0,0) - w(0,0)  \le C_3 \Big( \frac{  \Kcal^{1/\gamma}} {\tau^{1/\gamma}} + \frac1{\tau^{\gamma'-1}}  + \nonumber \\
 & \qquad \qquad   +  \sigma^{-\gamma'\frac{N+1}{N+2}} ( \Kcal +\tau^{\alpha_0/2} )  \|g\|_{L^{q_0}(Q_{R+1,\tau})} +  \frac{\tau^{1/\gamma} \Kcal^{1/\gamma'} }R +  \frac{\tau}{R^2} + (\ell_0-\ell_1)\Kcal \Big),\label{xest0}
\end{align}
where $\ell_i = \frac{h_i(\gamma-1)}{(h_i\gamma)^{\gamma'}}$, $i=1,2$
\end{proposition}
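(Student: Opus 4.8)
\textbf{Proof strategy for Proposition \ref{mainstima}.}

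The plan is to exploit the stochastic representation formula \eqref{valuefunction} for $w$, together with the companion Fokker--Planck problem \eqref{fokkeri}, and to carefully track how each term scales with the cylinder dimensions $R,\tau$, with the diffusion $\sigma$, and with the normalization quantity $\Kcal$ (which will encode the part of the Lagrangian cost $\iint \ell|b|^{\gamma'}m$ that we cannot afford to throw away). First I would fix the adjoint density $m$ solving \eqref{fokkeri} with $m(0)=\delta_0$ on $Q_{R,\tau}$, and record the a priori bounds that are available for it: the mass bound $\int m(s) \le 1$, the $L^{q_0'}$-bound of Lemma \ref{lemmamq0} (with its explicit $\sigma$-dependence, which is the source of the factor $\sigma^{-\gamma'\frac{N+1}{N+2}}$), and the control on the total lateral outflow $\iint_{\partial B_R\times(0,\tau)} Dm\cdot\nu$, which is small when $R$ is large relative to $\sqrt{\sigma\tau}$. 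These are the only facts about $m$ I will use; in particular I will \emph{not} differentiate $m$ in $x$.

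To get \eqref{test0}, the estimate on the time oscillation $|w(0,0)-w(0,\tau)|$, I would compare the representation formula at $(0,0)$ with the trivial trajectory bound obtained by freezing $b\equiv 0$ up to time $\tau$ (or up to the exit time), which produces $w(0,0)\le C(\sigma\tau\text{-heat average of }w(\cdot,\tau)) + \iint |f|\,(\text{heat kernel})$; using the H\"older hypotheses to replace $w(\cdot,\tau)$ by $w(0,\tau)$ up to an error $\lesssim (\sqrt{\sigma\tau})^\alpha + z(\sqrt{\sigma\tau})^{\alpha}$ and the $L^{q_0}$-smallness of $g$ on the small cylinder (quantified by \eqref{condstima}) gives the upper bound. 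For the reverse inequality one uses \eqref{hjb} and an optimal (or near-optimal) trajectory, where the Lagrangian cost $\iint \ell|b|^{\gamma'}m$ is exactly the quantity $\Kcal$; estimating $|b|$ via the gradient bound coming from the space H\"older hypothesis (which, at the $\alpha_0$ scale, is precisely what makes $|Dw|^{\gamma-1}$ integrable against $m$) and optimizing over the free time one is led to the three powers $\tau^{\alpha/2}$, $\tau^{\alpha_0/2}$, $\tau^{\alpha/(\gamma-\alpha(\gamma-1))}$ — the last one being the interpolation exponent between the Lagrangian term and the final/boundary data. The term $\tau\frac{R^\alpha+\tau^{\alpha/2}}{R}z$ is the lateral boundary contribution $\iint_{\partial B_R\times(0,\tau)} w\, Dm\cdot\nu$, bounded by the oscillation of $w$ on $\partial B_R$ (which is $\lesssim z(R^\alpha+\tau^{\alpha/2})$ by hypothesis) times the total outflow (which is $\lesssim \tau/R$).

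For \eqref{xest0}, the spatial oscillation $w(y_0,0)-w(0,0)$, this is the heart of the matter: I would \emph{bend} the trajectory/density, i.e. take $\tilde m$ solving the Fokker--Planck equation with the same drift but starting from $\delta_{y_0}$ and forced to agree with $m$ at time $\tau$ (this is where the condition $|y_0|\le 1 \ll R$ and the fatness of the cylinder are used, so that $\tilde m$ can be built by a small, essentially deterministic translation of the initial data that is absorbed well before the lateral boundary is reached). Feeding $\tilde m$ into the representation formula for $w(y_0,0)$ as a suboptimal control gives $w(y_0,0)-w(0,0) \le \iint \ell|b|^{\gamma'}(\tilde m - m) + \iint f(\tilde m - m) + \text{(boundary terms with }\tilde m - m)$. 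The running-cost term $\iint f(\tilde m-m)$ is controlled \emph{without differentiating $m$}: since $\tilde m-m$ is a difference of two densities with the same mass and the same final data, one estimates it in $L^{q_0'}$ directly (paying the $\sigma$-weight, hence $\sigma^{-\gamma'\frac{N+1}{N+2}}\|g\|_{L^{q_0}}$, multiplied by a factor $\Kcal+\tau^{\alpha_0/2}$ coming from the size of the relevant time-window / Lagrangian budget). The Lagrangian discrepancy $\iint \ell|b|^{\gamma'}(\tilde m - m)$, after using convexity of $\xi\mapsto|\xi|^{\gamma'}$ and the precise values $\ell_i = h_i(\gamma-1)/(h_i\gamma)^{\gamma'}$, is what generates the main terms $\Kcal^{1/\gamma}/\tau^{1/\gamma}$, $\tau^{1-\gamma'}$, the cross term $\tau^{1/\gamma}\Kcal^{1/\gamma'}/R$, and the genuinely nonlinear defect $(\ell_0-\ell_1)\Kcal$ (which vanishes if $h_0=h_1$), while $\tau/R^2$ is again a boundary leftover. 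The main obstacle — and the technical core of the whole paper — is precisely this last paragraph: constructing $\tilde m$ quantitatively and estimating $\iint f(\tilde m - m)$ using only an $L^{q_0'}$ bound on $\tilde m - m$ (no gradient bound on $m$ is available up to the critical exponent), while simultaneously keeping the $\sigma$-dependence sharp enough that the $o(1)$ viscosity is compensated by the rapid vanishing of $\|g\|_{L^{q_0}}$ at small scales. Everything else is bookkeeping of exponents and repeated use of Young's inequality to split products into the displayed powers of $\tau$, $R$, $\Kcal$, and $\|g\|_{L^{q_0}}$.
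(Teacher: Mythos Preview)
Your high-level strategy matches the paper's: use the heat density ($b\equiv 0$) to bound $w(0,0)-w(0,\tau)$ from above, the Fokker--Planck density $m$ with optimal drift $b=h_1\gamma|Dw|^{\gamma-2}Dw$ to bound it from below (this produces $\Kcal = \iint|b|^{\gamma'}m$), and a ``bent'' density to compare $w(y_0,0)$ with $w(0,0)$. The bookkeeping you sketch for \eqref{test0} is essentially right, though you misidentify the origin of $\tau^{\alpha/(\gamma-\alpha(\gamma-1))}$: no gradient bound on $w$ is ever used. That exponent comes from applying \eqref{bdglemma2} to $\int_{B_R}|y|^\alpha m(y,\tau)\,dy \lesssim \tau^{\alpha/\gamma}\Kcal^{\alpha/\gamma'} + (\sigma\tau)^{\alpha/2}$ and then Young's inequality with conjugate $(\gamma'/\alpha)'$.

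Your description of the bending step, however, has a genuine gap. You ask for $\tilde m$ to ``solve the Fokker--Planck equation with the same drift but starting from $\delta_{y_0}$ and forced to agree with $m$ at time $\tau$'': this is overdetermined, since once the drift and the initial datum are fixed the terminal datum is determined. The paper's $\tilde m$ is \emph{not} a new FP solution; it is the explicit deterministic translation $\tilde m(\tilde y,s)=m(\tilde y-\xi_s,s)$ with $\xi_s=\frac{\tau-s}{\tau}y_0$, which starts at $\delta_{y_0}$, ends at $m(\cdot,\tau)$, and solves FP with the \emph{modified} drift $\tilde b(\tilde y,s)=b(\tilde y-\xi_s,s)-\xi'_s$. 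Equivalently, in the representation for $w(y_0,0)$ one inserts the suboptimal control $\tilde b$, whose Lagrangian differs from $\ell_0|b|^{\gamma'}$ by terms of size $|\xi'|=1/\tau$ (this is where $\Kcal^{1/\gamma}/\tau^{1/\gamma}$ and $\tau^{1-\gamma'}$ come from, via \eqref{Ldiff}). This distinction matters precisely for the running-cost term: after the change of variables $y=\tilde y-\xi_s$, the discrepancy becomes $\iint[g(y+\xi_s,s)-g(y,s)]\,m(y,s)\,\dyds$, which is bounded by $2\|g\|_{L^{q_0}(Q_{R+1,\tau})}\|m\|_{L^{q_0'}}$ using only Lemma~\ref{lemmamq0} --- no estimate on any ``$\tilde m-m$'' in $L^{q_0'}$ is needed or available. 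The term $(\ell_0-\ell_1)\Kcal$ arises simply from subtracting the supersolution identity (coefficient $\ell_1$) from the subsolution inequality (coefficient $\ell_0$), and $\tau^{1/\gamma}\Kcal^{1/\gamma'}/R + \tau/R^2$ is the lateral boundary term via \eqref{boundaryloss}, not part of the Lagrangian discrepancy. If you tried instead to build a genuine second FP solution and control $\|\tilde m-m\|_{L^{q_0'}}$, you would need a stability estimate for \eqref{fokkeri} at the critical exponent $q_0'$, which is exactly what the method is designed to avoid.
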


The proof will be obtained in several steps. In what follows, will always assume that $R^2 \ge \sigma \tau$.

\begin{lemma}\label{w0wtau}  There exists $C_0$ depending on $\alpha, q_0$  such that
\[
w(0,0)-w(0,\tau) \le C_0\left(\tau^{\frac\alpha2} + \tau \frac{R^\alpha + \tau^{\frac{\alpha}2}}{R^2}z + \sigma^{-\gamma'\frac{N+1}{N+2}} \|g\|_{L^{q_0}(Q_{R,\tau})} \tau^{{\alpha_0}/2} \right).
\]
\end{lemma}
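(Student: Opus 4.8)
The plan is to use the stochastic representation formula \eqref{valuefunction} — or, equivalently, the duality with the Fokker--Planck equation \eqref{fokkeri} — applied to the solution $w$ of the differential inequalities \eqref{hja}--\eqref{hjb} on the cylinder $Q_{R,\tau}$. I would first estimate $w(0,0)-w(0,\tau)$ from above. To this end, consider the adjoint density $m$ solving \eqref{fokkeri} with $m(0)=\delta_0$ and homogeneous Dirichlet condition on $\partial B_R\times(0,\tau)$, with drift $b=-h_1\gamma|Dw|^{\gamma-2}Dw$ coming from the subsolution inequality \eqref{hja}. Multiplying \eqref{hja} by $m$ and integrating by parts over $Q_{R,\tau}$ gives an inequality of the form
\[
w(0,0) \le \iint_{Q_{R,\tau}} \Big(\ell_1|b|^{\gamma'} + g\Big)\,m\,\dyds + \int_{B_R} w(\cdot,\tau)\,m(\tau)\,\dy - \sigma\iint_{\partial B_R\times(0,\tau)} w\,Dm\cdot\nu\,\dyds,
\]
and the lower bound for $w(0,\tau)$ will be handled symmetrically (or simply bounded using the Hölder control). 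The point is then to bound each term on the right. The term $\int_{B_R} w(\cdot,\tau)\,m(\tau)$, compared with $w(0,\tau)$, is controlled by the spatial Hölder seminorm hypothesis (bounded by $3$) times $\int |y|^\alpha\,m(\tau)\,\dy$, which is $O(\text{(typical displacement)}^\alpha)$; since $m$ is the law of a diffusion with bounded diffusivity $\sigma\le1$ run for time $\tau$, this displacement is $O(\tau^{1/2})$ up to the drift contribution, giving a $\tau^{\alpha/2}$-type term (plus a drift piece absorbed into the Lagrangian).

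The genuinely delicate terms are the Lagrangian $\iint \ell_1 |b|^{\gamma'} m$, the running cost $\iint g\,m$, and the lateral boundary term. For the running cost I would use the $L^{q_0'}$-bound on $m$ (Lemma `lemmamq0`, referenced in the excerpt) together with the fact that the scaled diffusion deteriorates that bound by a factor $\sigma^{-\gamma'\frac{N+1}{N+2}}$ — this is precisely where the weight $\sigma^{-\gamma'\frac{N+1}{N+2}}\|g\|_{L^{q_0}(Q_{R,\tau})}$ in the statement comes from, via Hölder's inequality $\iint g m \le \|g\|_{L^{q_0}}\|m\|_{L^{q_0'}}$, and the extra factor $\tau^{\alpha_0/2}$ accounts for the time-length of the cylinder in the heat-kernel bound on $\|m\|_{L^{q_0'}}$. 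The Lagrangian term is controlled by testing the subsolution inequality with $m$ in a slightly different way (an energy/duality identity), using that $w$ has controlled oscillation on $Q_{R,\tau}$; one extracts a bound on $\iint |b|^{\gamma'} m$ in terms of $\mathrm{osc}\,w$ over $Q_{R,\tau}$, which by the Hölder hypotheses is $\lesssim R^\alpha + \tau^{\alpha/2}$, and the resulting contribution — after accounting for the lateral escape probability — produces the term $\tau\frac{R^\alpha+\tau^{\alpha/2}}{R^2}z$. For the lateral boundary term, the key observation (mentioned in the strategy) is that $\iint_{\partial B_R\times(0,\tau)} Dm\cdot\nu$ is the total mass exiting through the lateral boundary, which is small when $R$ is large relative to $\tau^{1/2}$; combined with the control of $w$ on that boundary by the Hölder seminorms (which is where the factor $z$ and $R^\alpha+\tau^{\alpha/2}$ enter), this boundary contribution is also absorbed into the $\tau\frac{R^\alpha+\tau^{\alpha/2}}{R^2}z$ term.

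The main obstacle, I expect, is the careful bookkeeping of how the vanishing viscosity $\sigma$ interacts with the Fokker--Planck estimates: one must track the $\sigma$-dependence in the $L^{q_0'}$-bound for $m$ and in the heat-kernel/Gaussian bounds used for both the final-time term and the lateral-exit mass, making sure every negative power of $\sigma$ is exactly the one claimed ($\sigma^{-\gamma'\frac{N+1}{N+2}}$) and appears only multiplying $\|g\|_{L^{q_0}}$, not the geometric terms. The assumption $R^2\ge\sigma\tau$ is exactly what guarantees the diffusion does not spread mass beyond a fixed fraction of $B_R$ over the time interval, keeping the lateral-exit estimates uniform. Once these $\sigma$-uniform Fokker--Planck bounds are in place, assembling the four contributions gives the stated inequality; the constant $C_0$ depends only on $\alpha$ and $q_0$ (through the exponents in the heat-kernel estimates) and on $\underline h,\bar h$ through $\ell_1$, as claimed.
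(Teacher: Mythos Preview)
Your approach has a genuine gap: you test the subsolution inequality \eqref{hja} against the \emph{drifted} Fokker--Planck density $m$ (with $b$ proportional to $|Dw|^{\gamma-2}Dw$), which forces a Lagrangian term $\iint |b|^{\gamma'} m$ onto the right-hand side. You then claim this term ``is controlled by testing the subsolution inequality with $m$ in a slightly different way \ldots\ one extracts a bound on $\iint |b|^{\gamma'} m$ in terms of $\mathrm{osc}\,w$''. But no such independent bound is available at this point in the argument. In the paper, the quantity $\Kcal = \iint |b|^{\gamma'} m$ is controlled only \emph{after} the present lemma, by combining the upper bound on $w(0,0)-w(0,\tau)$ (this lemma) with a lower bound obtained from the \emph{supersolution} inequality \eqref{hjb} tested against $m$ (where $\Kcal$ appears with the good sign). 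Your plan thus either is circular, or smuggles in a bound on $\Kcal$ that is proved downstream. Note also that the target inequality contains no Lagrangian-type term at all; the three pieces in the statement are exactly the final-time, boundary, and $g$ contributions.

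The paper's device avoids the issue entirely: it tests \eqref{hja} against the \emph{drift-free} heat kernel $\mu$ (i.e.\ $b\equiv 0$ in \eqref{fokkeri}). Then the gradient term $h_0\iint |Dw|^\gamma \mu$ lands on the \emph{left} with a positive sign and is simply discarded. What remains are precisely the three terms in the statement: $\int_{B_R}[w(\tau)-w(0,\tau)]\mu(\tau)$, bounded via Lemma~\ref{bdglemma} with $b=0$ giving $C(\sigma\tau)^{\alpha/2}$; the $g$-term via Lemma~\ref{lemmamq0}; and the lateral boundary term via \eqref{boundaryloss} with $b=0$, producing $Cz(R^\alpha+\tau^{\alpha/2})\tau/R^2$. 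This is why $C_0$ depends only on $\alpha$ and $q_0$, and not on $\underline h,\bar h$ as you suggest.
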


\begin{proof} Let $\mu$ be the solution of
\[
\begin{cases}
\partial_s \mu - \sigma \Delta \mu = 0& \text{on $B_R \times (0,\tau)$} \\
\mu(y,s) = 0 & \text{on $\partial B_R \times (0,\tau)$} \\
\mu(0) = \delta_0
\end{cases}
\]
(the existence of such $\mu$ and its regularity are detailed in Appendix \ref{onthefp}). Testing \eqref{hja} by $\mu$ and the equation for $\mu$ by $w$, and integrating by parts on $Q_{R,\tau}$ yields
\begin{multline*}
w(0,0) + h_0 \iint_{B_R  \times (0,\tau) } |Dw|^\gamma \mu \dyds \le \\ + \iint_{B_R  \times (0,\tau) } g \mu \dyds + \int_{B_R} w(y,\tau) \mu(y,\tau) \dy- \sigma \iint_{\partial B_R  \times (0,\tau) } w D\mu \cdot \nu \dyds.
\end{multline*}
Since $\int_{B_R} \mu(y,\tau) - \sigma \iint_{ \partial B_R \times (0,s) } D\mu \cdot \nu = 1$,
\begin{multline*}
w(0,0) - w(0,\tau) \le \int_{B_R} [w(y,\tau)-w(0,\tau)] \mu(y,\tau) \dy \\
+ \iint_{B_R  \times (0,\tau) } g \mu \dyds
- \sigma \iint_{\partial B_R  \times (0,\tau) } [w-w(0,\tau)] D\mu \cdot \nu \dyds.
\end{multline*}
First, by the growth assumptions on $w$ and Lemma \ref{bdglemma}
\[
\int_{B_R} [w(y,\tau)-w(0,\tau)] \mu(y,\tau) \dy \le 3 \int_{B_R} |y|^\alpha \mu(y,\tau) \le 3 C \left(\sigma \tau \right)^{\alpha/2}.
\]
Secondly, applying Lemma \ref{lemmamq0} 
\[
\iint_{B_R  \times (0,\tau) } g \mu \dyds \le \|g\|_{L^{q_0}(Q_{R,\tau})} \|\mu\|_{L^{q_0'}(Q_{R,\tau})}
\le C \sigma^{-\gamma'\frac{N+1}{N+2}} \|g\|_{L^{q_0}(Q_{R,\tau})}  \sigma^{\gamma'/2}\tau^{\alpha_0/2}.
\]
Finally, again 
by the growth assumptions on $w$ and \eqref{boundaryloss}
\begin{multline*}
\sigma \iint_{\partial B_R  \times (0,\tau) } [w(y,s)-w(0,\tau)] (-D\mu(y,s) \cdot \nu(y)) \dyds \le \\
\sigma 3^{\gamma/2}z (R^\alpha + \tau^{\frac\alpha2}) \iint_{\partial B_R  \times (0,\tau) }  -D\mu(y,s) \cdot \nu(y) \dyds \le 
C z (R^\alpha + \tau^{\frac\alpha2})  \frac{\tau}{R^2},
\end{multline*}
which, together with the previous inequalities, yields the assertion.
\end{proof}

Let us now define
\[
b(y,s) = h_1\gamma|Dw(y,s)|^{\gamma-2} Dw(y,s),
\]
and $m$ be the solution of the dual problem
\[
\begin{cases}
\partial_s m - \sigma \Delta m - \divv(b m) = 0& \text{on $B_R \times (0,\tau)$} \\
m(y,s) = 0 & \text{on $\partial B_R \times (0,\tau)$} \\
m(0) = \delta_0.
\end{cases}
\]

\begin{lemma} There exists $c_0, c_1, C_1, f^0 > 0$ depending on $\bar h, \alpha, q_0$ such that if
\begin{equation}\label{cond0}
\sigma^{-\gamma'\frac{N+1}{N+2}} \|g\|_{L^{q_0}(Q_{R,\tau})}  \le f^0, \qquad z \frac{R^\alpha + \tau^{\frac\alpha2}}{R} \le c_1,
\end{equation}
then
\[
w(0,0)-w(0,\tau) \ge c_0 \iint_{B_R  \times (0,\tau) } |b|^{\gamma'} m \dyds + \Phi,
\]
where
\[
\Phi \ge - C_1\left(\tau^{\frac\alpha2} + \tau^{\frac{\alpha_0}2}  +  \tau^{\frac{\alpha}{\gamma-\alpha(\gamma-1)}} + \tau \frac{R^\alpha + \tau^{\frac\alpha2}}{R}z  \right).
\]
\end{lemma}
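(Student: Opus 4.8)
\emph{Overall approach.} The plan is to run the adjoint (duality) argument of Lemma~\ref{w0wtau} in reverse, testing \eqref{hjb} (rather than \eqref{hja}) against the \emph{drifted} Fokker--Planck density $m$ instead of the heat propagator $\mu$. The gain is that the duality pairing now produces a genuine dissipation term proportional to $\iint|b|^{\gamma'}m$, which is exactly what we want on the right-hand side; the price is that $m$ carries the drift $b=h_1\gamma|Dw|^{\gamma-2}Dw$, so the moment and boundary estimates for $m$ acquire drift-dependent corrections that must be absorbed.

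\emph{Step 1 (the duality identity).} Test \eqref{hjb} against $m\ge0$ and the equation for $m$ against $w$, integrate by parts on $Q_{R,\tau}$. Using $m=0$ on $\partial B_R\times(0,\tau)$, $m(0)=\delta_0$, the time terms and the second-order boundary terms collapse, and the divergence term yields $-\iint Dw\cdot b\,m$. Since $Dw\cdot b=h_1\gamma|Dw|^\gamma$ and $|Dw|^\gamma=(h_1\gamma)^{-\gamma'}|b|^{\gamma'}$, the two Hamiltonian contributions combine into $h_1(\gamma-1)\iint|Dw|^\gamma m=\ell_1\iint|b|^{\gamma'}m$ with $\ell_1=\frac{h_1(\gamma-1)}{(h_1\gamma)^{\gamma'}}$. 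Because $\gamma>2$ gives $1-\gamma'\in(-1,0)$, the map $h_1\mapsto\ell_1$ is decreasing, so $\ell_1\ge 2c_0$ where $2c_0:=\frac{\gamma-1}{\gamma^{\gamma'}}\bar h^{\,1-\gamma'}>0$ depends only on $\bar h,\gamma$. Recentring everything at $w(0,\tau)$ by means of the mass balance $\int_{B_R}m(\cdot,\tau)-\sigma\iint_{\partial B_R\times(0,\tau)}Dm\cdot\nu=1$, we reach
\[
w(0,0)-w(0,\tau)\ \ge\ \ell_1\,Y+\mathrm{(I)}+\mathrm{(II)}+\mathrm{(III)},\qquad Y:=\iint_{Q_{R,\tau}}|b|^{\gamma'}m\,\dyds\ge0,
\]
with $\mathrm{(I)}=\int_{B_R}[w(y,\tau)-w(0,\tau)]\,m(y,\tau)\dy$, $\mathrm{(II)}=-\sigma\iint_{\partial B_R\times(0,\tau)}[w(y,s)-w(0,\tau)]\,Dm\cdot\nu\,\dyds$, and $\mathrm{(III)}=\iint_{Q_{R,\tau}}g\,m\,\dyds$; so $\Phi=\mathrm{(I)}+\mathrm{(II)}+\mathrm{(III)}$.

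\emph{Step 2 (bulk and running-cost terms).} For $\mathrm{(I)}$, the spatial H\"older hypothesis gives $|w(y,\tau)-w(0,\tau)|\le 3|y|^\alpha$, and the moment estimate for the drifted density (Lemma~\ref{bdglemma} together with the stochastic representation of $m$: bound $X_s$ by its drift part, for which $\int_0^\tau|b|\le\tau^{1/\gamma}(\int_0^\tau|b|^{\gamma'})^{1/\gamma'}$ and Jensen apply since $\alpha/\gamma'<1$, plus its Brownian part) yields $\int_{B_R}|y|^\alpha m(y,\tau)\dy\le C((\sigma\tau)^{\alpha/2}+\tau^{\alpha/\gamma}Y^{\alpha/\gamma'})$. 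Young's inequality then gives $3\tau^{\alpha/\gamma}Y^{\alpha/\gamma'}\le\frac{c_0}{3}Y+C\,\tau^{\alpha/(\gamma-\alpha(\gamma-1))}$, since $\frac{\alpha/\gamma}{1-\alpha/\gamma'}=\frac{\alpha}{\gamma-\alpha(\gamma-1)}$. For $\mathrm{(III)}$, Lemma~\ref{lemmamq0} controls $\|m\|_{L^{q_0'}(Q_{R,\tau})}\le C\sigma^{-\gamma'\frac{N+1}{N+2}}\sigma^{\gamma'/2}\tau^{\alpha_0/2}$ (plus, if present, a drift-dependent term absorbed into $\frac{c_0}{3}Y$ exactly as above), so by H\"older and the first condition in \eqref{cond0}, $\mathrm{(III)}\ge-C\sigma^{-\gamma'\frac{N+1}{N+2}}\|g\|_{L^{q_0}(Q_{R,\tau})}\tau^{\alpha_0/2}\ge-Cf^0\,\tau^{\alpha_0/2}$, i.e.\ $\ge-C\tau^{\alpha_0/2}$.

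\emph{Step 3 (the boundary term --- the main point).} On $\partial B_R\times(0,\tau)$ combine the spatial and time H\"older hypotheses: $|w(y,s)-w(0,\tau)|\le 3R^\alpha+3^{\gamma/2}z\tau^{\alpha/2}\le Cz(R^\alpha+\tau^{\alpha/2})$. Since $-Dm\cdot\nu\ge0$ on $\partial B_R$, $|\mathrm{(II)}|\le Cz(R^\alpha+\tau^{\alpha/2})\,\sigma\iint_{\partial B_R\times(0,\tau)}(-Dm\cdot\nu)$. The escaping mass, which equals $1-\int_{B_R}m(\cdot,\tau)$, is estimated by the drifted version of \eqref{boundaryloss} as $\le C(\frac{\sigma\tau}{R^2}+\frac{\tau^{1/\gamma}Y^{1/\gamma'}}{R})$ (using $R^2\ge\sigma\tau$; note also that the second condition in \eqref{cond0} forces $R\ge c_1^{-1/(1-\alpha)}\ge1$ once $c_1\le1$). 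The first contribution gives $\le C\tau\frac{R^\alpha+\tau^{\alpha/2}}{R}z$ since $\sigma\le1\le R$. For the second, write $W:=z\frac{R^\alpha+\tau^{\alpha/2}}{R}\le c_1$ and apply Young \emph{keeping $W$ outside}: $W\tau^{1/\gamma}Y^{1/\gamma'}\le\delta WY+C\delta^{-\gamma/\gamma'}W\tau$; choosing $\delta=c_0/(3c_1)$ makes $\delta WY\le\frac{c_0}{3}Y$, while the remainder is $C(c_1,c_0)\,\tau\frac{R^\alpha+\tau^{\alpha/2}}{R}z$ --- again an admissible term. (Keeping the factor $W$ through the inequality is what prevents a spurious $C\tau$ from appearing.)

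\emph{Step 4 (conclusion).} Fixing $f^0,c_1$ and the Young parameters small enough, depending only on $\bar h,\gamma,\alpha,q_0$, the three absorbed pieces sum to at most $c_0$, so $\ell_1 Y-(\text{absorbed})\ge(\ell_1-c_0)Y\ge c_0Y$ by $\ell_1\ge2c_0$. The residual negative contributions are all among $\tau^{\alpha/2}$, $\tau^{\alpha_0/2}$, $\tau^{\alpha/(\gamma-\alpha(\gamma-1))}$ and $\tau\frac{R^\alpha+\tau^{\alpha/2}}{R}z$, which yields $w(0,0)-w(0,\tau)\ge c_0 Y+\Phi$ with $\Phi\ge-C_1(\cdots)$.

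\emph{Expected main obstacle.} Step~3: one must know that the lateral mass loss of the drifted Fokker--Planck solution is genuinely small, of order $\sigma\tau/R^2$ plus a drift term whose prefactor $W=z\frac{R^\alpha+\tau^{\alpha/2}}{R}$ is controlled by $c_1$, and the various Young absorptions must be arranged so that the only leftover drift-free quantities are precisely the four prescribed powers of $\tau$. A secondary subtlety is the quantifier order: every smallness constant ($f^0$, $c_1$, the $\varepsilon$'s) has to depend only on $\bar h,\alpha,q_0$ (and $\gamma$), never on $w$, $g$, $R$, $\tau$, $\sigma$ or $z$.
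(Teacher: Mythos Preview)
Your proposal is correct and follows essentially the same approach as the paper: the same duality identity from testing \eqref{hjb} against $m$, the same decomposition into a bulk moment term, a running-cost term, and a lateral boundary term, and the same tools (\eqref{bdglemma2}, Lemma~\ref{lemmamq0}, \eqref{boundaryloss}) with Young absorptions into $\frac{c_0}{3}Y$ for each. The only cosmetic differences are that the paper places an extra $c_0 Y$ inside $\Phi$ and absorbs three thirds there (rather than subtracting $c_0$ from $\ell_1$ at the end), and that for the boundary term the paper applies Young to $\tau^{1/\gamma}Y^{1/\gamma'}$ first and then invokes $W\le c_1$, whereas you keep $W$ through the inequality; both routes give the same outcome.
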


\begin{proof} 
Testing \eqref{hjb} by $m$ and the equation for $m$ by $w$, and integrating by parts on $Q_{R,\tau}$ yields
\begin{multline}\label{wn}
w(0,0) \ge h_1(\gamma-1) \iint_{B_R  \times (0,\tau) } |Dw|^\gamma m \dyds + \\+ \iint_{B_R  \times (0,\tau) } g m \dyds+  \int_{B_R} w(y,\tau) m(y,\tau) \dy- \sigma \iint_{\partial B_R  \times (0,\tau) } w Dm \cdot \nu \dyds.
\end{multline}
whence
\[
w(0,0) - w(0,\tau) \ge c_0 \iint_{B_R  \times (0,\tau) } |b|^{\gamma'} m \dyds + \Phi,
\]
where $2c_0 = \frac{\bar h(\gamma-1)}{(\bar h\gamma)^{\gamma'}} \le \frac{h_1(\gamma-1)}{(h_1\gamma)^{\gamma'}} =: \ell_1$, and
\begin{multline*}
\Phi = c_0 \iint_{B_R  \times (0,\tau) } |b|^{\gamma'} m \dyds + 
\int_{B_R} [w(y,\tau)-w(0,\tau)] m(y,\tau) \dy \\
+ \iint_{B_R  \times (0,\tau) } g m \dyds
- \sigma \iint_{\partial B_R  \times (0,\tau) } [w-w(0,\tau)] Dm \cdot \nu \dyds.
\end{multline*}
First, by the growth assumptions on $w$, inequality \eqref{bdglemma2}, and Young's inequality (notice that $\frac\alpha\gamma\big(\frac{\gamma'}\alpha\big)' = \frac{\alpha}{\gamma-\alpha(\gamma-1)} $),
\begin{multline*}
\int_{B_R} [w(y,\tau)-w(0,\tau)] m(y,\tau) \dy \ge - 3 \int_{B_R} |y|^\alpha m(y,\tau) \\ \ge - C \tau^{\alpha/\gamma}   \left( \iint_{B_R  \times (0,\tau) } |b|^{\gamma'} m \dyds  \right)^{\alpha/\gamma'} - C \left(\sigma \tau \right)^{\alpha/2} \\
\ge - \frac{c_0}3 \iint_{B_R  \times (0,\tau) }|b|^{\gamma'} m \dyds - C( \tau^{\alpha/2} + \tau^{\frac{\alpha}{\gamma-\alpha(\gamma-1)} } ) .
\end{multline*}
Secondly, applying Lemma \ref{lemmamq0} and assuming $f^0$ small enough (depending on $c_0$ and $C$ below),
\begin{multline*}
\iint_{B_R  \times (0,\tau) } g m \dyds \ge - \|g\|_{L^{q_0}(Q_{R,\tau})} \|m\|_{L^{q_0'}(Q_{R,\tau})} 
\ge   - f^0  \sigma^{\gamma'\frac{N+1}{N+2}}   \|m\|_{L^{q_0'}(Q_{R,\tau})} \\
\ge -Cf^0 \left( \iint_{ B_R  \times (0,\tau) } |b|^{\gamma'} m \dxdt + \sigma^{\gamma'/2}\tau^{\alpha_0/2} \right) \\
\ge - \frac{c_0}3 \iint_{B_R  \times (0,\tau) }|b|^{\gamma'} m \dyds -  C\sigma^{\gamma'/2}\tau^{\alpha_0/2}.
\end{multline*}
Finally, by the growth assumptions on $w$, \eqref{boundaryloss} and Young's inequality,
\begin{multline*}
\sigma \iint_{\partial B_R  \times (0,\tau) } [w(y,s)-w(0,\tau)] (-Dm(y,s) \cdot \nu(y)) \dyds \ge \\
\ge -\sigma 3^{\gamma/2}z (R^\alpha + \tau^{\frac\alpha2}) \iint_{\partial B_R  \times (0,\tau) }  -Dm(y,s) \cdot \nu(y) \dyds \\
\ge 
- C z (R^\alpha + \tau^{\frac\alpha2})\left( \frac{\tau^{1/\gamma}}R \left( \iint_{B_R  \times (0,\tau) } |b|^{\gamma'} m \dxdt \right )^{1/\gamma'}+  \frac{\tau}{R^2} \right) \\ 
\ge - \overline C z (R^\alpha + \tau^{\frac\alpha2})\left( \frac1R \iint_{B_R  \times (0,\tau) } |b|^{\gamma'} m \dxdt + \frac{\tau}R+  \frac{\tau}{R^2} \right) \\
\stackrel{\eqref{cond0}}{\ge} - \frac{c_0}3 \iint_{B_R  \times (0,\tau) }|b|^{\gamma'} m \dyds - \overline C z (R^\alpha + \tau^{\frac\alpha2})\left(  \frac{\tau}R+  \frac{\tau}{R^2} \right),
\end{multline*}
where $c_1$ in \eqref{cond0} is chosen to be $c_1=c_0/(3\overline C)$. The three estimates above yield the statement.
\end{proof}

\begin{corollary} Under the assumption \eqref{cond0}, for some $c_0, C_2$ depending on $\bar h, \alpha, q_0$,
\begin{equation}\label{testimate}
|w(0,0)-w(0,\tau)| +  \iint_{B_R  \times (0,\tau) } |b|^{\gamma'} m \dyds \le C_2\left(\tau^{\frac\alpha2} + \tau^{\frac{\alpha_0}2}  +  \tau^{\frac{\alpha}{\gamma-\alpha(\gamma-1)}}  +  \tau \frac{R^\alpha + \tau^{\frac\alpha2}}{R}z  \right).
\end{equation}
\end{corollary}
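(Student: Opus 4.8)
The plan is to sandwich the increment $w(0,0)-w(0,\tau)$ between the upper bound coming from Lemma~\ref{w0wtau} and the lower bound coming from the previous lemma, and then to read off the bound on the Lagrangian integral $\iint |b|^{\gamma'}m$ from the latter. First I would record two elementary consequences of \eqref{cond0}. Taking $c_1\le1$ (which we are free to do), the second condition in \eqref{cond0} gives $R^{\alpha-1}\le z\tfrac{R^\alpha+\tau^{\alpha/2}}{R}\le c_1\le1$, and since $\alpha-1<0$ this forces $R\ge1$; hence $\tau/R^2\le\tau/R$, so the error term $\tau\tfrac{R^\alpha+\tau^{\alpha/2}}{R^2}z$ occurring in Lemma~\ref{w0wtau} is dominated by $\tau\tfrac{R^\alpha+\tau^{\alpha/2}}{R}z$ (this is in fact already used silently in the proof of the previous lemma). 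Moreover the first condition in \eqref{cond0} bounds $\sigma^{-\gamma'\frac{N+1}{N+2}}\|g\|_{L^{q_0}(Q_{R,\tau})}$ by the constant $f^0$. Plugging these into Lemma~\ref{w0wtau} (and absorbing $f^0\tau^{\alpha_0/2}$ into the $\tau^{\alpha_0/2}$ term) yields an upper bound of the form
\[
w(0,0)-w(0,\tau)\le C\Big(\tau^{\frac\alpha2}+\tau^{\frac{\alpha_0}2}+\tau\frac{R^\alpha+\tau^{\frac\alpha2}}{R}z\Big)=:C\,\Theta,
\]
with $\Theta$ no larger than the right-hand side of \eqref{testimate}.

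Next, from the previous lemma, $w(0,0)-w(0,\tau)\ge c_0\iint_{B_R\times(0,\tau)}|b|^{\gamma'}m\,\dyds+\Phi$. Since the Lagrangian integral is nonnegative, the lower bound on $\Phi$ gives immediately
\[
w(0,0)-w(0,\tau)\ge-C_1\Big(\tau^{\frac\alpha2}+\tau^{\frac{\alpha_0}2}+\tau^{\frac{\alpha}{\gamma-\alpha(\gamma-1)}}+\tau\frac{R^\alpha+\tau^{\frac\alpha2}}{R}z\Big),
\]
which together with the previous display controls $|w(0,0)-w(0,\tau)|$ by the right-hand side of \eqref{testimate}. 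For the Lagrangian term I would instead rearrange the previous lemma as
\[
c_0\iint_{B_R\times(0,\tau)}|b|^{\gamma'}m\,\dyds\le\big(w(0,0)-w(0,\tau)\big)-\Phi\le C\,\Theta+C_1\Big(\tau^{\frac\alpha2}+\tau^{\frac{\alpha_0}2}+\tau^{\frac{\alpha}{\gamma-\alpha(\gamma-1)}}+\tau\frac{R^\alpha+\tau^{\frac\alpha2}}{R}z\Big),
\]
using the upper bound from Lemma~\ref{w0wtau} for $w(0,0)-w(0,\tau)$ and the lower bound on $\Phi$. Dividing by $c_0$ and adding this to the bound on $|w(0,0)-w(0,\tau)|$ produces \eqref{testimate} with a constant $C_2$ depending only on $\bar h,\alpha,q_0$.

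There is essentially no obstacle here: the two lemmas were designed precisely so that this sandwich closes. The only points needing a bit of care are bookkeeping ones: checking that \eqref{cond0} (with $c_1\le1$) yields $R\ge1$, so that the $R^{-2}$ factor in Lemma~\ref{w0wtau} may be replaced by $R^{-1}$ to match \eqref{testimate}; and noticing that the extra error term $\tau^{\alpha/(\gamma-\alpha(\gamma-1))}$ enters only through the $m$-estimate (via Young's inequality applied to $\int_{B_R}|y|^\alpha m(y,\tau)\,\dy$), which is why it appears in \eqref{testimate} although it is absent from the one-sided bound $\Theta$.
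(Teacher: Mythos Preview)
Your proof is correct and follows exactly the paper's approach: combine the upper bound on $w(0,0)-w(0,\tau)$ from Lemma~\ref{w0wtau} with the lower bound from the previous lemma to control both $|w(0,0)-w(0,\tau)|$ and, by rearrangement, the Lagrangian integral $\iint|b|^{\gamma'}m$. Your explicit derivation of $R\ge1$ from \eqref{cond0} (with $c_1\le1$) is a nice touch; the paper simply invokes $R\ge1$ as an ambient assumption (cf.\ the sentence ``$R,\tau\ge1$, $R^2\ge\sigma\tau$ as before'' preceding the space-oscillation lemma).
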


\begin{proof} The previous lemma states that
\[
 c_0 \iint_{B_R  \times (0,\tau) } |b|^{\gamma'} m \dyds \le w(0,0)-w(0,\tau) - \Phi,
\]
hence by Lemma \ref{w0wtau} one concludes (using that $R \ge 1$ and $\sigma^{-\gamma'\frac{N+1}{N+2}} \|g\|_{L^{q_0}(Q_{R,\tau})}  \le f^0$) .
\end{proof}

The previous corollary allows to control the oscillation of $w$ in time. We now take care of the oscillation of $w$ in space. Let $y_0$ be any unit vector, $R, \tau \ge 1$, $R^2 \ge \sigma \tau$ as before, and
\[
\xi_s = \frac{\tau-s}\tau y_0.
\]

\begin{lemma} The following inequality holds.
\begin{multline}\label{wnabove}
w(y_0,0) \le \ell_0 \iint_{B_R  \times (0,\tau) } |b - \xi'_s|^{\gamma'} m \dyds + \iint_{B_R  \times (0,\tau) } g(y + \xi_s, s) m(y,s) \dyds + \\ +  \int_{B_R} w(y,\tau) m(y,\tau) \dy- \sigma \iint_{\partial B_R  \times (0,\tau) } w(y + \xi_s,s) Dm(y,s) \cdot \nu \dyds.
\end{multline}
\end{lemma}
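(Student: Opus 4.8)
The idea is to mimic the derivation of \eqref{wn} in the previous lemma, but applied to the shifted function $v(y,s) := w(y+\xi_s, s)$ rather than to $w$ itself. Since $\xi_s = \frac{\tau-s}{\tau} y_0$ is an affine-in-$y$, smooth-in-$s$ path connecting $\xi_0 = y_0$ to $\xi_\tau = 0$, the map $y \mapsto y + \xi_s$ is a translation for each fixed $s$, so $v$ satisfies a transported version of the HJ inequality. Concretely, $\partial_s v(y,s) = (\partial_s w)(y+\xi_s,s) + \xi'_s \cdot Dw(y+\xi_s,s)$, $Dv(y,s) = (Dw)(y+\xi_s,s)$, and $\Delta v(y,s) = (\Delta w)(y+\xi_s,s)$. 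Plugging into \eqref{hjb} evaluated at the point $(y+\xi_s,s)$ gives
\[
-\partial_s v - \sigma \Delta v + \xi'_s \cdot Dv + h_1 |Dv|^\gamma \ge g(y+\xi_s,s) \quad \text{on } B_R \times (0,\tau),
\]
since the extra term $\xi'_s\cdot Dw(y+\xi_s,s)$ from the time derivative moves to the left-hand side. (One must check the domain: for $|y| \le R$ and $s \in (0,\tau)$, $|y+\xi_s| \le R + |y_0| = R+1$, which is exactly why $w$ is assumed to solve the inequalities on $Q_{R+1,\tau}$ rather than $Q_{R,\tau}$.)

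Next I would test this inequality against the dual density $m$ (the solution of the Fokker--Planck problem with drift $b = h_1\gamma|Dw|^{\gamma-2}Dw$, boundary value $0$ on $\partial B_R\times(0,\tau)$, and initial datum $\delta_0$), and simultaneously test the equation for $m$ against $v$, then integrate by parts on $Q_{R,\tau}$. The zeroth-order-in-time boundary terms produce $v(0,0) = w(\xi_0,0) = w(y_0,0)$ on the left and $\int_{B_R} v(y,\tau) m(y,\tau)\dy = \int_{B_R} w(y+\xi_\tau,\tau)m(y,\tau)\dy = \int_{B_R} w(y,\tau)m(y,\tau)\dy$ on the right (using $\xi_\tau = 0$). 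The lateral boundary term is $-\sigma\iint_{\partial B_R\times(0,\tau)} v\, Dm\cdot\nu = -\sigma\iint_{\partial B_R\times(0,\tau)} w(y+\xi_s,s)\, Dm\cdot\nu$. The running-cost term is $\iint g(y+\xi_s,s) m\,\dyds$. The interaction of the Hamiltonian term with the new transport term $\xi'_s\cdot Dv$ and the drift term $-\divv(bm)$ from $m$'s equation produces, after rearranging, $\iint (h_1(\gamma-1)|Dw|^\gamma + \xi'_s\cdot Dw)m$ transported at $y+\xi_s$; I would rewrite $b(y+\xi_s,s) = h_1\gamma|Dw(y+\xi_s,s)|^{\gamma-2}Dw(y+\xi_s,s)$ so that this becomes (after the standard Legendre-duality manipulation used to pass from \eqref{wn} to the line with $\ell_1$) a term of the form $\ell_0\iint |b - \xi'_s|^{\gamma'} m$, where $\ell_0 = \frac{h_0(\gamma-1)}{(h_0\gamma)^{\gamma'}}$ arises because we now only have the \emph{lower} control $h_1 \le \bar h$ no longer being the relevant constant — rather, completing the Fenchel inequality $p\cdot q \le \frac{1}{\gamma}|p|^\gamma \cdot(\text{const}) + \frac{1}{\gamma'}|q|^{\gamma'}\cdot(\text{const})$ with the shift $\xi'_s$ absorbed into the conjugate variable yields exactly the Lagrangian $\ell_0|b-\xi'_s|^{\gamma'}$. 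Collecting everything gives precisely \eqref{wnabove}.

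The main obstacle is the bookkeeping in the integration-by-parts step: one must be careful that all integrations by parts in space are justified (the boundary terms on $\partial B_R$ involving $Dm\cdot\nu$ are genuinely there because $m$ vanishes on the lateral boundary but its normal derivative does not, while $v$ does \emph{not} vanish there), and that the time-derivative manipulation $\partial_s[v(y,s)] = (\partial_s w + \xi'_s\cdot Dw)(y+\xi_s,s)$ is correctly sorted so that the extra drift $\xi'_s\cdot Dv$ merges with the Hamiltonian via Young/Fenchel rather than being left as a stray term. The appearance of $\ell_0$ (with $h_0$, i.e. the constant from \eqref{hja}) instead of $\ell_1$ is the subtle sign point: since here we want an \emph{upper} bound on $w(y_0,0)$, the duality must be arranged to over-estimate, and it is the Fenchel inequality $h_1(\gamma-1)|Dw|^\gamma + \xi'_s\cdot Dw \ge -\ell_0|b-\xi'_s|^{\gamma'}$-type bound (valid for any constant in the admissible range, optimized at $h_0$) that forces the constant $\ell_0$. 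No convergence or regularity issues beyond those already established in Appendix \ref{onthefp} for $m$ and in the hypotheses on $w$ should arise; this lemma is a pure identity/inequality derived by duality, so it is short once the shifted equation is written down correctly.
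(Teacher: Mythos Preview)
Your overall architecture---shift $w$ to $v(y,s)=w(y+\xi_s,s)$ and pair with $m$---is correct and is exactly a change-of-variables version of the paper's proof (the paper instead keeps $w$ and shifts $m$ to $m(\tilde y-\xi_s,s)$, integrating on the moving cylinder $B_R(\xi_s)\times(0,\tau)$). So the route is right, but the sign bookkeeping has a genuine error.

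The error is that you invoke \eqref{hjb}. That inequality reads $-\partial_s w-\sigma\Delta w+h_1|Dw|^\gamma\ge g$, and after shifting gives $-\partial_s v-\sigma\Delta v+\xi'_s\cdot Dv+h_1|Dv|^\gamma\ge g(y+\xi_s,s)$. Testing against the nonnegative density $m$ then produces $w(y_0,0)\ge\cdots$, i.e.\ a \emph{lower} bound, exactly as in the derivation of \eqref{wn}. No subsequent Fenchel inequality can flip this sign; the ``$h_1(\gamma-1)|Dw|^\gamma+\xi'_s\cdot Dw\ge-\ell_0|b-\xi'_s|^{\gamma'}$-type bound'' you propose goes in the wrong direction to help. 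To get the \emph{upper} bound \eqref{wnabove} you must start from the subsolution inequality \eqref{hja}: $-\partial_s w-\sigma\Delta w+h_0|Dw|^\gamma\le g$. The paper uses this together with the exact Legendre identity $h_0|p|^\gamma=\sup_q\{p\cdot q-\ell_0|q|^{\gamma'}\}$, which immediately yields
\[
-\partial_s w-\sigma\Delta w+Dw\cdot\tilde b-\ell_0|\tilde b|^{\gamma'}\le g\qquad\text{for every vector field }\tilde b,
\]
and then picks $\tilde b(\tilde y,s)=b(\tilde y-\xi_s,s)-\xi'_s$. This is where $\ell_0$ (with $h_0$) enters naturally---not through an a posteriori optimization over constants, but because the subsolution side is the one carrying $h_0$. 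In your $v$-language the same step reads: from \eqref{hja}, $-\partial_s v-\sigma\Delta v+\xi'_s\cdot Dv+h_0|Dv|^\gamma\le g(y+\xi_s,s)$; pairing with $m$ gives $w(y_0,0)\le\cdots+\iint\big(Dv\cdot(b-\xi'_s)-h_0|Dv|^\gamma\big)m$, and now Young/Fenchel bounds the bracket by $\ell_0|b-\xi'_s|^{\gamma'}$. Note finally that the $b$ appearing in $|b-\xi'_s|^{\gamma'}$ is $b(y,s)$ (the drift of $m$), not $b(y+\xi_s,s)$ as you wrote; this matters because $Dv(y,s)=Dw(y+\xi_s,s)$ while the drift from the $m$-equation is evaluated at $(y,s)$, so the Fenchel step is genuinely an inequality rather than the equality one gets in \eqref{wn}.
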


\begin{proof} Since
\[
h_0 |p|^\gamma = \sup_{q \in \R^N} \left\{ p \cdot q - \ell_0 |q|^{\gamma'} \right\}, \qquad \text{where $\ell_0 = \frac{h_0(\gamma-1)}{(h_0\gamma)^{\gamma'}}$},
\]
for any vector field $\tilde b$ we have
\begin{equation}\label{subhj}
-\partial_s w - \sigma  \Delta w + Dw \cdot \tilde b - \ell_0 |\tilde b|^{\gamma'} \le g \qquad \text{on $Q_{R+1,\tau}$}.
\end{equation}
Let us now choose
\[
\tilde b(\tilde y, s) = b(\tilde y - \xi_s, s) - \xi'_s \, = h_1\gamma|Dw(\tilde y - \xi_s,s)|^{\gamma-2} Dw(\tilde y - \xi_s,s) + \frac{y_0}\tau \quad \text{on $B_R(\xi_s) \times (0,\tau)$}.
\]
Test now the inequality \eqref{subhj} by $m(\tilde y - \xi_s, s)$, and integrate by parts on $B_R(\xi_s) \times (0,\tau) \ni (\tilde y, s)$ as follows:
\begin{multline*}
-\iint_{B_R(\xi_s)  \times (0,\tau) } \partial_s w(\tilde y, s) m(\tilde y - \xi_s, s) \dytds = 
-\iint_{B_R  \times (0,\tau) } \partial_s w(y + \xi_s, s) m(y, s) \dyds \\
= -\iint_{B_R  \times (0,\tau) } \partial_s (w(y + \xi_s, s)) m(y, s) \dyds + \iint_{B_R  \times (0,\tau) } Dw(y + \xi_s, s) \cdot \xi_s' m(y, s) \dyds \\
= \iint_{B_R  \times (0,\tau) } w(y + \xi_s, s) \partial_s m(y, s) \dyds + \iint_{B_R  \times (0,\tau) } Dw(y + \xi_s, s) \cdot \xi_s' m(y, s) \dyds \\ 
- \int_{B_R} w(y + \xi_\tau, \tau) m(y, \tau) \dy  + w(\xi_0, 0).
\end{multline*}

Secondly,
\begin{multline*}
-\iint_{B_R(\xi_s)  \times (0,\tau) } \sigma \Delta w(\tilde y, s) m(\tilde y - \xi_s, s) \dytds =
\\ = -\iint_{B_R  \times (0,\tau) } \sigma \Delta w(y + \xi_s, s) m(y, s) \dyds \\
 = -\iint_{B_R  \times (0,\tau) } \sigma \Delta  m(y, s) w(y + \xi_s, s) \dyds  + \sigma \iint_{\partial B_R  \times (0,\tau) } w(y + \xi_s,s) Dm  \cdot \nu \dyds.
\end{multline*}

Regarding the third term,
\begin{multline*}
\iint_{B_R(\xi_s)  \times (0,\tau) } Dw(\tilde y,s) \cdot \tilde b(\tilde y,s) m(\tilde y - \xi_s, s) \dytds = \\
= \iint_{B_R  \times (0,\tau) } Dw(y + \xi_s,s) \cdot [b(y, s) - \xi'_s] m(y, s) \dyds \\
= - \iint_{B_R  \times (0,\tau) } w(y + \xi_s,s) \divv \big(b(y, s) m(y, s)\big) \dyds \\
- \iint_{B_R  \times (0,\tau) } Dw(y + \xi_s,s) \cdot \xi'_s m(y, s) \dyds
\end{multline*}

Adding up the three previous inequalities, one finds the equation for $m$ which is being multiplied by $w(y + \xi_s,s)$, and the two terms $\iint Dw(y + \xi_s,s) \cdot \xi'_s m$ cancel out, so
\begin{multline*}
- \int_{B_R} w(y + \xi_\tau, \tau) m(y, \tau) \dy  + w( \xi_0, 0) + \sigma \iint_{\partial B_R  \times (0,\tau) } w(y + \xi_s,s) Dm  \cdot \nu \dyds + \\ - \ell_0 \iint_{B_R(\xi_s)  \times (0,\tau) } |\tilde b|^{\gamma'}  m(\tilde y - \xi_s, s) \dytds \le \iint_{B_R(\xi_s)  \times (0,\tau) } g m(\tilde y - \xi_s, s) \dytds.
\end{multline*}
After a further change of variables $y = \tilde y - \xi_s$, and the fact that $\xi_\tau = 0$ and $\xi_0 = y_0$ we obtain the assertion.
\end{proof}

\begin{lemma} 
 Let $\Kcal = \iint_{B_R  \times (0,\tau) } |b|^{\gamma'} m \dyds$. Then, for some $C_3$ depending on $\underline h, \alpha, q_0$,
\begin{multline}\label{xestimate}
w(y_0,0) - w(0,0) \le C_3 \Big( \frac{  \Kcal^{1/\gamma}} {\tau^{1/\gamma}} + \frac1{\tau^{\gamma'-1}}  + \sigma^{-\gamma'\frac{N+1}{N+2}} ( \Kcal +\tau^{\alpha_0/2} )  \|g\|_{L^{q_0}(Q_{R+1,\tau})}+ \\ + \frac{\tau^{1/\gamma} \Kcal^{1/\gamma'} }R +  \frac{\tau}{R^2} + (\ell_0-\ell_1)\Kcal \Big),
\end{multline}
where $\ell_i = \frac{h_i(\gamma-1)}{(h_i\gamma)^{\gamma'}}$, $i=1,2$.
\end{lemma}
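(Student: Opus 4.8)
The plan is to subtract the lower bound \eqref{wn} for $w(0,0)$ from the upper bound \eqref{wnabove} for $w(y_0,0)$, both written against the \emph{same} adjoint density $m$. Because $\xi_\tau=0$ the terminal contributions $\int_{B_R}w(y,\tau)m(y,\tau)\dy$ are identical in the two formulas and cancel, and because $h_1(\gamma-1)|Dw|^\gamma=\ell_1|b|^{\gamma'}$ (since $|b|^{\gamma'}=(h_1\gamma)^{\gamma'}|Dw|^{\gamma}$, using $(\gamma-1)\gamma'=\gamma$), what is left is $w(y_0,0)-w(0,0)\le\mathrm{(I)}+\mathrm{(II)}+\mathrm{(III)}$ with
\begin{align*}
\mathrm{(I)}&=\iint_{Q_{R,\tau}}\big(\ell_0|b-\xi'_s|^{\gamma'}-\ell_1|b|^{\gamma'}\big)\,m\dyds,\\
\mathrm{(II)}&=\iint_{Q_{R,\tau}}\big(g(y+\xi_s,s)-g(y,s)\big)\,m(y,s)\dyds,\\
\mathrm{(III)}&=-\sigma\iint_{\partial B_R\times(0,\tau)}\big(w(y+\xi_s,s)-w(y,s)\big)\,Dm\cdot\nu\dyds.
\end{align*}
I would then estimate these three pieces one at a time; the six terms that emerge are meant to reproduce exactly the right-hand side of \eqref{xestimate}.

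For $\mathrm{(I)}$ the key is the elementary inequality $\big||a-b|^{\gamma'}-|a|^{\gamma'}\big|\le C\big(|a|^{\gamma'-1}|b|+|b|^{\gamma'}\big)$, valid for $\gamma'>1$, which gives $\ell_0|b-\xi'_s|^{\gamma'}-\ell_1|b|^{\gamma'}\le(\ell_0-\ell_1)|b|^{\gamma'}+C\ell_0\big(|b|^{\gamma'-1}|\xi'_s|+|\xi'_s|^{\gamma'}\big)$. Integrating against $m$, using $|\xi'_s|=|y_0|/\tau\le1/\tau$ and the mass bound $\iint_{Q_{R,\tau}}m\dyds\le\tau$ (the density $m$ has total mass at most $1$ at every time, its lateral boundary being absorbing), and applying H\"older's inequality with conjugate exponents $\gamma,\gamma'$ to the cross term $\iint|b|^{\gamma'-1}|\xi'_s|m$ (here $(\gamma'-1)/\gamma'=1/\gamma$ is what produces precisely the factor $\Kcal^{1/\gamma}$), I expect $\mathrm{(I)}\le C\big((\ell_0-\ell_1)\Kcal+\Kcal^{1/\gamma}\tau^{-1/\gamma}+\tau^{-(\gamma'-1)}\big)$, the first three terms of \eqref{xestimate}. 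For $\mathrm{(II)}$ I would change variables $z=y+\xi_s$ at each fixed time — which sends $B_R$ into $B_{R+1}$, where $g$ is given, since $|\xi_s|\le1$ — so that both $\iint g(y+\xi_s,s)m$ and $\iint g(y,s)m$ are bounded by $\|g\|_{L^{q_0}(Q_{R+1,\tau})}\|m\|_{L^{q_0'}(Q_{R,\tau})}$ via H\"older and translation invariance of the $L^{q_0'}$-norm; Lemma \ref{lemmamq0} then replaces $\|m\|_{L^{q_0'}(Q_{R,\tau})}$ by $C\sigma^{-\gamma'\frac{N+1}{N+2}}\big(\Kcal+\sigma^{\gamma'/2}\tau^{\alpha_0/2}\big)$, and discarding $\sigma^{\gamma'/2}\le1$ yields the $g$-term of \eqref{xestimate}. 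For $\mathrm{(III)}$, on $\partial B_R\times(0,\tau)$ the spatial H\"older hypothesis gives $|w(y+\xi_s,s)-w(y,s)|\le 3|\xi_s|^\alpha\le3$, and since $-Dm\cdot\nu\ge0$ there, $\mathrm{(III)}\le3\sigma\iint_{\partial B_R\times(0,\tau)}(-Dm\cdot\nu)$, which by \eqref{boundaryloss} is $\le C\big(\tau^{1/\gamma}\Kcal^{1/\gamma'}/R+\tau/R^2\big)$. Adding the three bounds gives \eqref{xestimate} with $C_3=C(\underline h,\alpha,q_0)$ (recall $\ell_0=\frac{\gamma-1}{\gamma^{\gamma'}}h_0^{1-\gamma'}\le C(\underline h)$ since $1-\gamma'<0$).

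The hard part is the estimate of $\mathrm{(I)}$: in the superquadratic regime $\gamma'<2$ the Hessian $D^2(|p|^{\gamma'})$ is singular at $p=0$, so the second-order Taylor remainder one would naively use is unavailable — the first-order remainder bound $C(|a|^{\gamma'-1}|b|+|b|^{\gamma'})$ is the right replacement — and then the conjugate exponents must be tracked carefully, so that the cross term produces exactly $\Kcal^{1/\gamma}\tau^{-1/\gamma}$ and the pure $|\xi'_s|^{\gamma'}$ term exactly $\tau^{-(\gamma'-1)}$. A lesser, bookkeeping difficulty is to make sure the shifted arguments $y+\xi_s$ stay inside the cylinders on which $g$ and $w$ are controlled; this costs only a harmless enlargement by $|\xi_s|\le1$, and it is precisely why $\|g\|_{L^{q_0}(Q_{R+1,\tau})}$ (rather than $\|g\|_{L^{q_0}(Q_{R,\tau})}$) appears in \eqref{xestimate} and why \eqref{wnabove} was set up on $Q_{R+1,\tau}$ from the start. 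Finally, I note that — unlike the earlier lemmas of this section — no smallness of $g$, nor of $z(R^\alpha+\tau^{\alpha/2})/R$, is needed for \eqref{xestimate}: those conditions will enter only afterwards, when \eqref{xestimate} is combined with the time estimate \eqref{testimate} to absorb $\Kcal$ on the left.
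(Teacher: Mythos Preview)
Your proposal is correct and follows essentially the same approach as the paper: subtract \eqref{wn} from \eqref{wnabove}, split the difference into the three terms $\mathrm{(I)}$, $\mathrm{(II)}$, $\mathrm{(III)}$, and estimate them respectively via the inequality \eqref{Ldiff} combined with H\"older and the mass bound \eqref{massloss}, via H\"older and Lemma \ref{lemmamq0}, and via the spatial H\"older hypothesis together with \eqref{boundaryloss}. Your organizational choice of absorbing $(\ell_0-\ell_1)\Kcal$ into $\mathrm{(I)}$ rather than isolating it as a fourth term is cosmetic, and your remarks on why $Q_{R+1,\tau}$ appears and why no smallness condition is needed here are accurate.
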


\begin{proof}
Recall \eqref{wn}, which reads
\begin{multline*}
w(0,0) \ge \ell_1 \Kcal + \iint_{B_R  \times (0,\tau) } g m \dyds + \\ +  \int_{B_R} w(y,\tau) m(y,\tau) \dy- \sigma \iint_{\partial B_R  \times (0,\tau) } w Dm  \cdot \nu \dyds.
\end{multline*}
Subtracting \eqref{wnabove} and the previous equality yields
\begin{align*}
w(y_0,0) - w(0,0) \le 
\ell_0 & \iint_{B_R  \times (0,\tau) } [ |b - \xi'_s|^{\gamma'} - |b|^{\gamma'} ] m \dyds \\ 
 + & \iint_{B_R  \times (0,\tau) } [g(y + \xi_s, s) - g(y,s)] m(y,s) \dyds \\
  - &  \sigma \iint_{\partial B_R  \times (0,\tau) } [w(y + \xi_s,s)-w(y,s)] Dm(y,s)  \cdot \nu \dyds\\
& +(\ell_0-\ell_1) \Kcal.
\end{align*}

To estimate the first term, we make use of the inequality \eqref{Ldiff}, that $|\xi'| = \tau^{-1}$ and the H\"older inequality:
\begin{multline*}
\ell_0\iint_{B_R  \times (0,\tau) } [ |b - \xi'_s|^{\gamma'} - |b|^{\gamma'} ] m \dyds \stackrel{\eqref{Ldiff}}{\le} \\ \le C | \xi'_s| \iint_{B_R  \times (0,\tau) } |b|^{\gamma'-1}  m \dyds + C | \xi'_s |^{\gamma'} \iint_{B_R  \times (0,\tau) }  m \dyds \\
\le \frac C \tau \left( \iint_{B_R  \times (0,\tau) } |b|^{\gamma'}  m \dyds \right)^{\frac1\gamma} \left( \iint_{B_R  \times (0,\tau) } m \dyds \right)^{\frac1{\gamma'}} + \frac C {\tau^{\gamma'}}  \iint_{B_R  \times (0,\tau) }  m \dyds\\
\stackrel{\eqref{massloss}}{\le}
 \frac{ C \Kcal^{1/\gamma}} {\tau^{1/\gamma}} + \frac C {\tau^{\gamma'-1}}.
\end{multline*}

As for the second term, Lemma \eqref{lemmamq0} implies
\begin{multline*}
\iint_{B_R  \times (0,\tau) } [g(y + \xi_s, s) - g(y,s)] m(y,s) \dyds \le 2\|g\|_{L^{q_0}(Q_{R+1,\tau})} \|m\|_{L^{q_0'}(Q_{R,\tau})} \\
\le C \sigma^{-\gamma'\frac{N+1}{N+2}} \|g\|_{L^{q_0}(Q_{R+1,\tau})} \left( \Kcal + \sigma^{\gamma'/2}\tau^{\alpha_0/2} \right)
\end{multline*}

Finally, by the growth assumptions on $w$,
\begin{multline*}
\sigma \iint_{\partial B_R  \times (0,\tau) } [w(y + \xi_s,s)-w(y,s)] (-Dm \cdot \nu) \dyds \le\\ \le 3\sigma \iint_{\partial B_R  \times (0,\tau) }  (-Dm \cdot \nu) \dyds \stackrel{\eqref{boundaryloss}}{\le} \frac{3 \tau^{1/\gamma}}R \Kcal^{1/\gamma'}+  \frac{C \tau}{R^2},
\end{multline*}
and by collecting the three estimate we conclude.
\end{proof}

To conclude the proof of Proposition \ref{mainstima}, it just suffices to collect the estimates \eqref{testimate} and  \eqref{xestimate}.

\section{$\alpha_0$-H\"older regularity}\label{sec:hol}

In this section we prove the $\alpha_0$-H\"older estimates, which are at the base of further regularity results.

\begin{theorem}\label{alpha0reg} Let $\Fcal \subset L^{q_0}(Q)$ be a uniformly integrable set in $L^{q_0}(Q)$ and assume that $\|u\|_{L^\infty(Q)} \le K$. Then there exists $C, z$ depending on $\Fcal, K, Q, h, q_0$ such that
\[
\llbracket u \rrbracket_{\alpha_0, z; Q} \le C.
\] 
\end{theorem}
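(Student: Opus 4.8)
The plan is to argue by contradiction, using a blow-up/rescaling argument that produces a sequence of solutions for which the ``nonlinear'' seminorm $\llbracket \cdot \rrbracket_{\alpha_0,z;Q}$ blows up, and then to apply the oscillation estimate of Proposition \ref{mainstima} to derive a contradiction. More precisely, suppose the conclusion fails: then for every $n$ there exist $u_n \in W^{2,1}_q(Q)$ solving \eqref{hj0} with some $f_n \in \Fcal$, $\|u_n\|_{L^\infty(Q)} \le K$, and $\llbracket u_n \rrbracket_{\alpha_0, n; Q} \to \infty$ (we let the parameter $z = n$ diverge to exploit the fact, stressed in the strategy section, that the time seminorm can only be shrunk when $z$ is large). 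Since the seminorm carries the weights $\dcal_\alpha(\cdot, \partial^+Q)$, the supremum defining $\llbracket u_n \rrbracket_{\alpha_0,n;Q}$ is (essentially) attained at interior points; let $(x_n,t_n),(x_n',t_n')$ be a near-maximizing pair and let $r_n \to 0$ be the associated scale (either $|x_n - x_n'|$ or $|t_n - t_n'|^{1/2}$, whichever realizes the max in $\llbracket\cdot\rrbracket_{\alpha_0,n}$). One then rescales: set
\[
w_n(y,s) = \lambda_n^{-1}\big( u_n(x_n + r_n y,\ t_n + \rho_n s) - u_n(x_n,t_n) \big),
\]
choosing the space scale $r_n$, the time scale $\rho_n$ and the amplitude $\lambda_n$ so that (i) the \emph{transport part} $-\partial_s w_n + h\,|Dw_n|^\gamma$ is preserved up to the constant $h(x_n,t_n)$ — this forces $\rho_n, \lambda_n$ to be powers of $r_n$ dictated by the homogeneity $\alpha_0 = \frac{\gamma-2}{\gamma-1}$ — while (ii) the normalization makes the rescaled nonlinear seminorm equal to exactly $1$ on the unit cylinder $Q_0$. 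The key computation is that under this scaling the diffusion coefficient becomes $\sigma_n = r_n^{2}/\rho_n = o(1)$ (vanishing viscosity), and $w_n$ solves differential inequalities of the form \eqref{hja}--\eqref{hjb} with $g = g_n$ the rescaled $f_n$ and $h_0, h_1 \to h(x_n,t_n)$.

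Next I would verify the hypotheses of Proposition \ref{mainstima} for $w_n$ on a large cylinder $Q_{R_n+1,\tau_n}$ with $R_n, \tau_n \to \infty$ chosen carefully. The growth bounds $\sup |w_n(y,s)-w_n(y',s)|/|y-y'|^{\alpha_0} \le 3$ and the analogous time bound $\le 3^{\gamma/2} z$ follow from the fact that the \emph{global} weighted seminorm $\llbracket u_n\rrbracket_{\alpha_0,n;Q}$, after rescaling, controls oscillations on $Q_{R_n,\tau_n}$ by a bounded multiple of $1$ (here the weights $\dcal_\alpha$ are what let a local bound become a uniform bound on an expanding cylinder, since $(x_n,t_n)$ stays a fixed parabolic distance from $\partial^+Q$ after division — this is the ``annoying technical point'' with distance functions). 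The smallness condition \eqref{condstima} on $\sigma_n^{-\gamma'\frac{N+1}{N+2}}\|g_n\|_{L^{q_0}(Q_{R_n,\tau_n})}$ is where \emph{uniform integrability} of $\Fcal$ enters decisively: rescaling multiplies the $L^{q_0}$ norm of $f_n$ by a factor that, by the precise choice of exponents, is exactly a positive power of $\sigma_n$ times the $L^{q_0}$-norm of $f_n$ over a small $Q$-subcylinder of vanishing measure — so uniform integrability forces $\|f_n\|_{L^{q_0}}$ over that shrinking set to $0$ faster than the $\sigma_n$-loss, and the product is $\le f^0$ for $n$ large. The condition $z\,(R_n^{\alpha_0}+\tau_n^{\alpha_0/2})/R_n \le c_1$ is arranged by picking $R_n$ growing suitably faster than $\tau_n$ (and than $z = n$); the compatibility $R_n^2 \ge \sigma_n \tau_n$ is automatic since $\sigma_n \to 0$.

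With \eqref{condstima} satisfied, Proposition \ref{mainstima} yields \eqref{test0} and \eqref{xest0} for $w_n$: the right-hand sides are $o(1)$ as $n\to\infty$ because $\tau_n^{\alpha_0/2}$-type terms are beaten by the prefactors once one chooses $\tau_n\to\infty$ slowly relative to the other divergences — wait, rather: one uses \eqref{test0} to first bound $\Kcal_n = \iint |b_n|^{\gamma'} m_n$ and $|w_n(0,0)-w_n(0,\tau_n)|$, which must be handled via the scaling freedom in $\tau_n$; the genuinely delicate point is that the terms $\tau^{\alpha/2}$, $\tau^{\alpha_0/2}$, $\tau^{\alpha/(\gamma-\alpha(\gamma-1))}$ in \eqref{test0} \emph{grow} with $\tau$, so $\tau_n$ cannot simply be sent to infinity — instead one must exploit that after the blow-up the relevant oscillation being estimated (which equals the normalized value $\approx 1$) appears on the \emph{left}, while on the right everything is multiplied by appropriate small powers; concretely one normalizes so that $\tau_n$ is \emph{fixed large but finite}, and sends only $R_n, z=n, \sigma_n$ to their limits. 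Then \eqref{test0}--\eqref{xest0} combine to show both the time oscillation and space oscillation of $w_n$ over $Q_0$ are $\le 1 - \delta$ for some fixed $\delta > 0$ and $n$ large (the terms $(\ell_{0,n}-\ell_{1,n})\Kcal_n \to 0$ since $h_0,h_1 \to h(x_n,t_n)$), contradicting that the rescaled nonlinear seminorm was normalized to be $1$ on $Q_0$. This contradiction proves the theorem.

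I expect the main obstacle to be the bookkeeping of the five scaling parameters $r_n, \rho_n, \lambda_n, R_n, \tau_n$ (together with $z=n$ and $\sigma_n$) so that \emph{simultaneously}: the transport operator is preserved, $\sigma_n\to 0$, the $L^{q_0}$-smallness \eqref{condstima} holds via uniform integrability, the fatness condition $z(R_n^{\alpha_0}+\tau_n^{\alpha_0/2})/R_n \le c_1$ holds, and the right-hand sides of \eqref{test0}--\eqref{xest0} stay bounded below $1$. In particular, reconciling ``$z = n \to\infty$'' (needed to shrink the time seminorm) with ``$z R_n^{\alpha_0-1}\le c_1$'' (needed for \eqref{condstima}) pins down how fast $R_n$ must grow, and one must check this is compatible with $\tau_n$ large enough that the Lagrangian/running-cost terms are controlled but not so large that the growing $\tau$-powers in \eqref{test0} spoil the estimate. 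Verifying that the supremum in $\llbracket u_n\rrbracket_{\alpha_0,n;Q}$ is attained (or almost attained) at interior points, and that the weights transform correctly under rescaling so the ``$\le 3$'' growth bounds hold on the \emph{expanding} cylinders $Q_{R_n,\tau_n}$ rather than just on $Q_0$, is the other technical crux.
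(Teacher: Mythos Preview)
Your approach matches the paper's --- contradiction, rescale so that the transport operator $-\partial_s w + h|Dw|^\gamma$ is preserved (yielding vanishing viscosity $\sigma_n \to 0$), then apply Proposition \ref{mainstima} --- but you misread one key point of the strategy: $z$ is \emph{fixed}, not sent to infinity. The paper chooses, in this order, $z = 4C_2$ (a universal constant from Proposition \ref{mainstima}), then $\tau$, then $R$, all finite, and only then $n \to \infty$. The ``main obstacle'' you anticipate --- reconciling $z_n \to \infty$ with $z R_n^{\alpha_0-1} \le c_1$, and checking the $\le 3$ H\"older bounds on \emph{expanding} cylinders --- is entirely an artifact of letting $z$ diverge; with $z$ fixed, $R$ and $\tau$ are fixed too, and the growth bounds (Lemma \ref{lemwnholder}) need only hold on a fixed $Q_{R,\tau}$ for $n$ large.

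Why is fixed $z$ enough? The two cases must be separated, and they use \emph{different} values of $\tau$. In the time-dominant case \cb\ the rescaling is normalized so that $|w_n(0,1) - w_n(0,0)| = z$, and \eqref{test0} with $\tau = 1$ and $R$ large gives $|w_n(0,0)-w_n(0,1)| \le 2C_2 = z/2$: contradiction, with no need for $z\to\infty$. In the space-dominant case \ca\ one has $|w_n(y_0,0) - w_n(0,0)| = 1$, and here one takes $\tau$ large (to make $\Kcal^{1/\gamma}\tau^{-1/\gamma}$ and $\tau^{1-\gamma'}$ small in \eqref{xest0}), then $R$ larger still, then $\eps$ small (so $(\ell_0-\ell_1)\Kcal$ is small), then $n$ large; each term on the right of \eqref{xest0} is driven below $1/8$. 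Your sketch conflates the two cases under a single ``$\tau$ fixed large'', which cannot work for \cb\ since \eqref{test0} bounds $|w_n(0,0)-w_n(0,\tau)|$, not $|w_n(0,0)-w_n(0,1)|$. Finally, the amplitude is $\lambda_n = M_n$ (the actual oscillation of $u_n$, or $M_n = |u_n|/z$ in case \cb), not a pure power of $r_n$; the time scale is $\rho_n = r_n^\gamma M_n^{1-\gamma}$, and the relations $r_n \to 0$, $d_n/r_n \to \infty$, $M_n/r_n^{\alpha_0} \to \infty$ (driving $\sigma_n \to 0$ and the uniform-integrability smallness of $g_n$) all follow from $L_n \to \infty$ combined with $\|u_n\|_\infty \le K$.
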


\textbf{1. Set-up of the contradiction argument.} Fix $z > 0$, which will be specified below. Assume by contradiction that for some sequence $\{f_n\} \subset \Fcal$ and $\{u_n\}$ solving \eqref{hj0},
\[
\llbracket u \rrbracket_{\alpha_0, z; Q} \to \infty \quad \text{as $n \to \infty$}.
\]
Setting $L_n := \llbracket u \rrbracket_{\alpha_0, z; Q} / 2$, we will distinguish two cases:
\begin{center}
\begin{tabular}{l}
\ca \, $2L_n = \llbracket u\rrbracket^x_{\alpha_0; Q} \ge  (z^{-1} \llbracket u\rrbracket^t_{\sfrac{\alpha_0}2; Q} )^{\frac2{\gamma}} $, \\
\cb \, $2L_n =  (z^{-1} \llbracket u\rrbracket^t_{\sfrac{\alpha_0}2; Q})^{\frac2{\gamma}} \ge \llbracket u\rrbracket^x_{\alpha_0; Q} $ .
\end{tabular}
\end{center}
In case \ca, there are sequences $\{(x_n, \bar t_n)\}, \{(\bar x_n, \bar t_n )\} \subset Q$ such that for every $n$, $x_n \neq \bar x_n$ and
\[
L_n  \le \min\{ \dcal((x_n,\bar t_n), \partial Q ), \dcal((\bar x_n,\bar t_n), \partial Q ) \} \frac{|u_n(x_n,\bar t_n) - u(\bar x_n, \bar t_n)|}{|x_n-\bar x_n|^{\alpha_0}} \le 2 L_n.
\]
If we assume w.l.o.g. that $\dcal((x_n,\bar t_n), \partial Q ) \ge \dcal((\bar x_n , \bar t_n), \partial Q )$ and $u(\bar x_n, \bar t_n) = 0$, setting
\[
d_n = d(\bar x_n, \partial \Omega), \qquad M_n = |u_n(x_n,\bar t_n)|, \qquad r_n = |x_n-\bar x_n|,
\]
then the previous inequalities read
\begin{equation}\label{Lna}
L_n  \le \big(d^{\alpha_0}_n + |T-\bar t_n|^{\frac{\alpha_0}\gamma}\big) \frac{M_n}{r_n^{\alpha_0}} \le 2L_n.
\end{equation}

In the other case \cb, there are sequences $\{(\bar x_n, t_n)\}, \{(\bar x_n, \bar t_n )\} \subset Q$ such that for every $n$, $t_n > \bar t_n$ and
\[
L_n  \le z^{-\frac2\gamma }\min\{ \dcal{((\bar x_n,t_n), \partial Q )}, \dcal{(( \bar x_n, \bar t_n), \partial Q )} \} \frac{|u(\bar x_n ,t_n) - u(\bar x_n,\bar t_n)|^{\frac2\gamma}}{|t_n-\bar t_n|^{\frac{\alpha_0}\gamma}}  \le 2L_n.
\]
If we let
\[
d_n = d(\bar x_n, \partial \Omega), \qquad M_n = \frac{|u(\bar x_n,t_n)|}z, \qquad r_n = (t_n-\bar t_n)^{\frac1\gamma}M_n^{\frac{\gamma-1}{\gamma}},
\]
we have (as before $u(\bar x_n, \bar t_n) = 0$)
\begin{equation}\label{Lnb}
L_n  \le   \big(d^{\alpha_0}_n + |T-\bar t_n|^{\frac{\alpha_0}\gamma}\big) \frac{M_n^{\frac2\gamma+\alpha_0\frac{\gamma-1}\gamma}}{r_n^{\alpha_0}} = \big(d^{\alpha_0}_n + |T-\bar t_n|^{\frac{\alpha_0}\gamma}\big) \frac{M_n}{r_n^{\alpha_0}} \le 2L_n.
\end{equation}

In \textit{both} cases, since $M_n \le \|u_n\|_\infty \le K$ and $Q$ is bounded, we infer from \eqref{Lna} and \eqref{Lnb} that, as $n \to \infty$,
\begin{equation}\label{doverr}
\frac{d_n}{r_n} \to +\infty, \qquad r_n \to 0, \qquad
\frac{M_n^{\gamma-1}}{r_n^{\gamma-2}} = \left(\frac{M_n}{r_n^{\alpha_0}}\right)^{\gamma-1} \to +\infty,
\end{equation}
as well as
\begin{align}\label{doverr2}
\begin{split}
(T-\bar t_n)\frac{M_n^{\gamma-1}}{r_n^\gamma} & \ge (T-\bar t_n) \frac{M_n^{\frac\gamma{\alpha_0}}}{r_n^\gamma} K^{-\frac2{\alpha_0}} \to +\infty, \\
\frac{M_n^{\frac{\alpha_0}{\gamma'}} }{r_n^{\alpha_0}} (d^{\alpha_0}_n + |T-\bar t_n|^{\frac{\alpha_0}\gamma}) & \ge \frac{M_n }{r_n^{\alpha_0}} (d^{\alpha_0}_n + |T-\bar t_n|^{\frac{\alpha_0}\gamma}) K^{\frac{\alpha_0}{\gamma'}-1} \to +\infty.
\end{split}
\end{align}

\smallskip

\textbf{2. Scaling.} We now define the blow-up sequence
\begin{multline*}
w_n(y,s) = \frac{1}{M_n}\,u_n\left(\bar x_n + r_n y, \bar t_n + \frac{r_n^\gamma}{M_n^{\gamma-1}} s\right), \\ (y,s) \in \frac{\Omega - \bar x_n}{r_n} \times \left(-\bar t_n\frac{M_n^{\gamma-1}}{r_n^\gamma}, (T-\bar t_n)\frac{M_n^{\gamma-1}}{r_n^\gamma}\right) = Q_n.
\end{multline*}

In case \ca, let $y_0$ be such that $x_n = \bar x_n + r_n y_0$. By the definition of $r_n$, $|y_0| = 1$. Recall also that $u(\bar x_n, \bar t_n) = 0$, so
\begin{equation}\label{anorm}
|w_n(y_0, 0) - w_n(0,0)| = \frac1{M_n}|u_n(x_n,\bar t_n) - u_n(\bar x_n,\bar t_n)| = 1.
\end{equation}
On the other hand, in case \cb, 
\begin{equation}\label{bnorm}
|w_n(0, 1) - w_n(0,0)| = \frac1{M_n}|u_n(\bar x_n, t_n)| = z.
\end{equation}
The sequence $w_n$ solves the following HJ equation
\begin{equation}\label{hjw1}
-\partial_s w_n - \sigma_n \Delta w_n + h\left(\bar x_n + r_n y, \bar t_n + \frac{r_n^\gamma}{M_n^{\gamma-1}} s\right) |Dw_n|^\gamma = g_n \qquad \text{on $Q_n$},
\end{equation}
where
\[
\sigma_n = \frac{r_n^{\gamma-2}}{M_n^{\gamma-1}}, \qquad g_n(y,s) =  \frac{r_n^{\gamma}}{M_n^{\gamma}}f_n\left(\bar x_n + r_n y, \bar t_n + \frac{r_n^\gamma}{M_n^{\gamma-1}} s\right)
\]
Note that $\sigma_n \to 0$ (see \eqref{doverr}).
Moreover, for any $R, \tau > 0$, a straightforward computation yields
\[
\|g_n\|_{L^{q_0}(Q_{R,\tau}) }= \sigma_n^{\gamma'\frac{N+1}{N+2}} \|f_n\|_{L^{q_0}( \widetilde{Q}_{n,R,\tau}) }, \quad \widetilde{Q}_{n,R,\tau} = B_{Rr_n} (\bar x_n) \times (\bar t_n, \bar t_n + r_n^\gamma M_n^{1-\gamma} \tau)
\]
Therefore, since $f_n$ is uniformly integrable in $L^{q_0}(Q)$,
\begin{equation}\label{fnsmall}
\|f_n\|_{L^{q_0}( \widetilde{Q}_{n,R,\tau} )} \to 0 \qquad \text{as $n\to\infty$},
\end{equation}
because $r_n, r_n^\gamma M_n^{1-\gamma} \to 0$ in view of \eqref{doverr}, \eqref{doverr2}. Since $h$ is continuous on $\overline Q$, for any $\eps > 0$ (smaller than $h_0/2$), for $n$ large enough we have on on $Q_n$
\begin{equation}\label{subsupwn}
\begin{gathered}
-\partial_s w_n -  \sigma_n \Delta w_n + (h_n - \eps) |Dw_n|^\gamma \le g_n, \\
-\partial_s w_n -  \sigma_n \Delta w_n + (h_n + \eps) |Dw_n|^\gamma \ge g_n,
\end{gathered}
\end{equation}
where $h_0 \le h_n = h(\bar x_n, \bar t_n) \le h_1$.

In both cases, we have the following control on H\"older seminorms of $w_n$.
\begin{lemma}\label{lemwnholder} For any $R, \tau > 1$, we have for $n$ large enough that $\overline{Q}_{R, \tau}=\overline{B_R} \times [0, \tau] \subset Q_n$ and
\[
\sup_{(y,s), (y',s) \in \overline{Q}_{R, \tau}} \frac{|w_n(y,s) - w_n(y',s)|}{|y-y'|^{\alpha_0}} \le 3, \qquad 
\sup_{(y,s), (y, \bar s) \in \overline{Q}_{R, \tau}} \frac{|w_n(y,s) - w_n(y, s')|}{|s- s'|^{\alpha_0/2}} \le 3^{\frac\gamma2} z.
\]
\end{lemma}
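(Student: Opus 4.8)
The plan is to obtain both the inclusion and the two bounds by unwinding the definition of $w_n$ and pushing the weighted seminorm control of $u_n$ onto $w_n$. The inclusion $\overline{Q}_{R,\tau}\subset Q_n$ for $n$ large is the easy part: by the definition of $Q_n$ it amounts to $B_{Rr_n}(\bar x_n)\subset\Omega$ and $\tau<(T-\bar t_n)M_n^{\gamma-1}/r_n^\gamma$ (with $0$ automatically in the $s$-interval, since $\bar t_n\in(0,T)$), and both hold for $n$ large because $Rr_n/d_n\to0$ by \eqref{doverr} and $(T-\bar t_n)M_n^{\gamma-1}/r_n^\gamma\to+\infty$ by \eqref{doverr2}.

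For the seminorm bounds, the key step is the pair of scaling identities obtained from the change of variables defining $w_n$: for $(y,s),(y',s)\in\overline{Q}_{R,\tau}$, setting $x=\bar x_n+r_ny$, $x'=\bar x_n+r_ny'$ and $t=\bar t_n+r_n^\gamma M_n^{1-\gamma}s$ (so that $(x,t),(x',t)\in Q$ for $n$ large),
\[
\frac{|w_n(y,s)-w_n(y',s)|}{|y-y'|^{\alpha_0}}=\frac{r_n^{\alpha_0}}{M_n}\,\frac{|u_n(x,t)-u_n(x',t)|}{|x-x'|^{\alpha_0}},
\]
and, for $(y,s),(y,s')\in\overline{Q}_{R,\tau}$ with $t'=\bar t_n+r_n^\gamma M_n^{1-\gamma}s'$,
\[
\frac{|w_n(y,s)-w_n(y,s')|}{|s-s'|^{\alpha_0/2}}=\Big(\frac{r_n^{\alpha_0}}{M_n}\Big)^{\gamma/2}\,\frac{|u_n(x,t)-u_n(x,t')|}{|t-t'|^{\alpha_0/2}},
\]
the exponent $\gamma/2$ arising from $1+\tfrac{\gamma-1}{2}\alpha_0=\tfrac\gamma2$, i.e.\ from the defining identity $(\gamma-1)\alpha_0=\gamma-2$. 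On the right I would use the definitions of $\llbracket u_n\rrbracket^x_{\alpha_0;Q}$ and $\llbracket u_n\rrbracket^t_{\alpha_0/2;Q}$ to bound the plain Hölder quotients of $u_n$ by the corresponding seminorm divided by $\min\{\dcal((x,t),\partial Q),\dcal((x',t),\partial Q)\}$ (resp.\ by its $\gamma/2$-th power), together with $\llbracket u_n\rrbracket^x_{\alpha_0;Q}\le2L_n$ and $\llbracket u_n\rrbracket^t_{\alpha_0/2;Q}\le z(2L_n)^{\gamma/2}$, which hold in \emph{both} cases \ca\ and \cb. The crucial cancellation is that \eqref{Lna} and \eqref{Lnb} both read $L_n\le\dcal((\bar x_n,\bar t_n),\partial Q)\,M_n/r_n^{\alpha_0}$, i.e.\ $r_n^{\alpha_0}L_n/M_n\le\dcal((\bar x_n,\bar t_n),\partial Q)$; substituting, the powers of $L_n$ disappear and the space quotient is $\le 2\,\dcal((\bar x_n,\bar t_n),\partial Q)/\min\{\dcal((x,t),\partial Q),\dcal((x',t),\partial Q)\}$, while the time quotient is $\le z\big(2\,\dcal((\bar x_n,\bar t_n),\partial Q)/\min\{\dcal((x,t),\partial Q),\dcal((x,t'),\partial Q)\}\big)^{\gamma/2}$.

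To close, it then suffices to check that, for $n$ large and uniformly over images of $\overline{Q}_{R,\tau}$, one has $\min\{\dcal((x,t),\partial Q),\dcal((x',t),\partial Q)\}\ge\tfrac23\,\dcal((\bar x_n,\bar t_n),\partial Q)$. Indeed $x\in B_{Rr_n}(\bar x_n)$ gives $d(x,\partial\Omega)\ge d_n-Rr_n$ with $Rr_n/d_n\to0$, and $t\in[\bar t_n,\bar t_n+\tau r_n^\gamma M_n^{1-\gamma}]$ gives $T-t\ge(T-\bar t_n)-\tau r_n^\gamma M_n^{1-\gamma}$ with $\tau r_n^\gamma M_n^{1-\gamma}/(T-\bar t_n)\to0$ (both by \eqref{doverr}--\eqref{doverr2}, $R,\tau$ being fixed); raising to the powers $\alpha_0$ and $\alpha_0/\gamma$ and summing yields $\dcal((x,t),\partial Q)\ge\big(1-o(1)\big)\,\dcal((\bar x_n,\bar t_n),\partial Q)$. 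Taking $n$ large enough that the factor is $\ge2/3$ then gives the bounds $3$ and $3^{\gamma/2}z$, since $2/(2/3)=3$.

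I expect the only genuinely delicate point to be the bookkeeping of the two scaling identities — that the space quotient carries exactly the factor $r_n^{\alpha_0}/M_n$ and the time quotient exactly $(r_n^{\alpha_0}/M_n)^{\gamma/2}$, which is precisely where $\alpha_0=\tfrac{\gamma-2}{\gamma-1}$ enters — together with the observation that \eqref{Lna} and \eqref{Lnb} are literally the same inequality, so that cases \ca\ and \cb\ need not be separated. Everything else is the routine fact that distances to $\partial Q$ are barely moved by a blow-up whose scale tends to $0$.
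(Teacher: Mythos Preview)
Your proposal is correct and follows essentially the same approach as the paper: both use the scaling identities (with the exact factors $r_n^{\alpha_0}/M_n$ and $(r_n^{\alpha_0}/M_n)^{\gamma/2}$), the bound $\llbracket u_n\rrbracket^x_{\alpha_0;Q}\le 2L_n$, $\llbracket u_n\rrbracket^t_{\alpha_0/2;Q}\le z(2L_n)^{\gamma/2}$, the unified inequality $L_n r_n^{\alpha_0}/M_n\le\dcal((\bar x_n,\bar t_n),\partial^+Q)$, and the observation that distances to $\partial^+Q$ are perturbed by $o(1)$ under the blow-up. The only cosmetic difference is in the distance comparison: the paper uses subadditivity of $(\cdot)^{\alpha_0}$ and $(\cdot)^{\alpha_0/\gamma}$ to write $\dcal((\bar x_n,\bar t_n),\partial^+Q)\le\dcal((x,t),\partial^+Q)+Cr_n^{\alpha_0}/M_n^{\alpha_0/\gamma'}$ and invokes the second line of \eqref{doverr2}, whereas you factor out $d_n^{\alpha_0}$ and $(T-\bar t_n)^{\alpha_0/\gamma}$ and invoke $d_n/r_n\to\infty$ and the first line of \eqref{doverr2} directly---both equally valid.
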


\begin{proof} First,
\[
d(0, \partial \Omega_n) = d\left(0, \frac{\partial \Omega - \bar x_n}{r_n} \right) = \frac1{r_n} d(\bar x_n, \partial \Omega) \to +\infty
\]
by \eqref{doverr}. Moreover, \eqref{doverr2} guarantees that $(T-\bar t_n)\frac{M_n^{\gamma-1}}{r_n^\gamma} \to +\infty$,
therefore $\overline{B_R} \times [0, \tau] \subset Q_n$ whenever $n$ is large.

Let now $(y,s), (y',s) \in \overline{Q}_{R, \tau}$ and assume that $\dcal((y',s), \partial Q_n ) \ge \dcal((y , s), \partial Q_n )$. Then, writing
\[
(x,t)=\left(\bar x_n + r_n y, \bar t_n + \frac{r_n^\gamma}{M_n^{\gamma-1}} s\right), \quad (x',t)=\left(\bar x_n + r_n y', \bar t_n + \frac{r_n^\gamma}{M_n^{\gamma-1}} s\right)
\]
we get
\begin{equation}\label{eq001}
\frac{|u_n(x,t) - u_n(x',t)|}{|x-x'|^{\alpha_0}} \le \frac{[u]^x_{\alpha_0; Q}}{d^{\alpha_0}(x, \partial \Omega) + |T-t|^{\frac{\alpha_0}\gamma}} \le \frac{2L_n}{d^{\alpha_0}(x, \partial \Omega) + |T-t|^{\frac{\alpha_0}\gamma}}.
\end{equation}
Now, 
\begin{align}\label{eq0015}
\begin{split}
d^{\alpha_0}_n + (T-\bar t_n)^{\frac{\alpha_0}\gamma} & \le d^{\alpha_0}(x, \partial \Omega) + (T-t)^{\frac{\alpha_0}\gamma} + |x-\bar x_n|^{\alpha_0} + |t-\bar t_n|^{\frac{\alpha_0}\gamma} \\
& \le d^{\alpha_0}(x, \partial \Omega) + (T-t)^{\frac{\alpha_0}\gamma} + r_n^{\alpha_0} R^{\alpha_0} + \frac{r_n^{\alpha_0}}{M_n^{\frac{\alpha_0}{\gamma'}}} \tau^{\frac{\alpha_0}\gamma} \\
& \le d^{\alpha_0}(x, \partial \Omega) + (T-t)^{\frac{\alpha_0}\gamma} + C \frac{r_n^{\alpha_0}}{M_n^{\frac{\alpha_0}{\gamma'}}} ,
\end{split}
\end{align}
for some $C$ depending on $K, R, \tau$.
Therefore,
\begin{multline*}
\frac{|w_n(y,s) - w_n(y',s)|}{|y-y'|^{\alpha_0}} = \frac{|u_n(x,t) - u_n(x',t)|}{|x-x'|^{\alpha_0}} \frac{r_n^{\alpha_0}}{M_n} \stackrel{\eqref{Lna},\eqref{eq001}}{\le}
\frac{2L_n}{d^{\alpha_0}(x, \partial \Omega) + |T-t|^{\frac{\alpha_0}\gamma}} \frac{d^{\alpha_0}_n + |T-\bar t_n|^{\frac{\alpha_0}\gamma}}{L_n} \\
\stackrel{\eqref{eq0015}}{\le} 2 \left(1- C \frac{r_n^{\alpha_0}}{M_n^{\frac{\alpha_0}{\gamma'}} (d^{\alpha_0}_n + |T-\bar t_n|^{\frac{\alpha_0}\gamma}) }\right)^{-1} \le 3,
\end{multline*}
the last inequality being true for large $n$.

Similarly, $(y,s), (y,s') \in \overline{Q}_{R, \tau}$ be such that $\dcal((y,s'), \partial Q_n ) \ge \dcal((y , s), \partial Q_n )$. Then, for
\[
(x,t)=\left(\bar x_n + r_n y, \bar t_n + \frac{r_n^\gamma}{M_n^{\gamma-1}} s\right), \quad (x,t')=\left(\bar x_n + r_n y, \bar t_n + \frac{r_n^\gamma}{M_n^{\gamma-1}} s'\right)
\]
we have
\begin{equation}\label{eq002}
\frac{|u_n(x,t) - u_n(x,t')|}{|t-t'|^{\frac{\alpha_0}2}} \le \frac{[u_n]^t_{\frac{\alpha_0}2; Q}}{[d^{\alpha_0}(x, \partial \Omega) + |T-t|^{\frac{\alpha_0}\gamma}]^{\frac\gamma2}}\le
z2^{\frac\gamma2}\frac{L_n^{\frac\gamma2}}{[d^{\alpha_0}(x, \partial \Omega) + |T-t|^{\frac{\alpha_0}\gamma}]^{\frac\gamma2}}.
\end{equation}
Thus,
\begin{multline*}
\frac{|w_n(y,s) - w_n(y,s')|}{|s-s'|^{\frac{\alpha_0} 2}} = \frac{|u_n(x,t) - u_n(x,t')|}{|t-t'|^{\frac{\alpha_0}2}} \frac{r_n^{\alpha_0 \frac\gamma2}}{M_n^{\frac\gamma2}}\\  \stackrel{\eqref{Lna},\eqref{eq002}}{\le}
\frac{2^{\frac\gamma2} z L^{\frac\gamma2}_n}{ [d^{\alpha_0}(x, \partial \Omega) + |T-t|^{\frac{\alpha_0}\gamma}]^{\frac\gamma2} } \frac{[d^{\alpha_0}_n + |T-\bar t_n|^{\frac{\alpha_0}\gamma}]^ \frac\gamma2}{L_n^ \frac\gamma2} \\
\stackrel{\eqref{eq0015}}{\le} 2^{\frac\gamma2} z \left(1- C \frac{r_n^{\alpha_0}}{M_n^{\frac{\alpha_0}{\gamma'}} (d^{\alpha_0}_n + |T-\bar t_n|^{\frac{\alpha_0}\gamma}) }\right)^{-\frac\gamma2} \le 3^{\frac\gamma2} z.
\end{multline*}

\end{proof}

\textbf{3. Conclusion. } We now have to choose $z, \tau, R, \eps$, and pick $n$ large enough so that a contradiction is reached. To avoid circular dependencies, the general idea will be to choose first $z$ large, then $\tau$, then $R$ large enough, and finally $\eps$ small. The smallness of $\|f_n\|$ will also play a crucial role.

In view of Lemma \ref{lemwnholder}, we can employ Proposition \ref{mainstima} with $\alpha = \alpha_0$ (note that in this case $\frac{\alpha_0}{\gamma-\alpha_0(\gamma-1)} = \frac{\alpha_0}2$ and $\sigma_n^{-\gamma'\frac{N+1}{N+2}} \|g_n\|_{L^{q_0}(Q_{R,\tau}) } =  \|f_n\|_{L^{q_0}( \widetilde{Q}_{n,R,\tau}) }$), which yields
\begin{equation}  \label{test2}
 |w_n(0,0)-w_n(0,\tau)| +  \Kcal  \le C_2\left(\tau^{\frac{\alpha_0}2} +  \tau \frac{R^{\alpha_0} + \tau^{\frac{\alpha_0}2}}{R}z  \right)
 \end{equation}
 and
 \begin{multline}\label{xest2}
|w_n(y_0,0) - w_n(0,0)| \le C_3 \Big( \frac{  \Kcal^{1/\gamma}} {\tau^{1/\gamma}} + \frac1{\tau^{\gamma'-1}}  +  ( \Kcal + \tau^{\alpha_0/2} ) \|f_n\|_{L^{q_0}( \widetilde{Q}_{n,R+2,\tau})} \\ +  \frac{\tau^{1/\gamma} \Kcal^{1/\gamma'} }R +  \frac{\tau}{R^2} + \eta_n \Kcal \Big), 
 \end{multline}
provided that 
\[
\overline{B_{R+2}} \times [0, \tau] \subset Q_n, \qquad \|f_n\|_{L^{q_0}( \widetilde{Q}_{n,R,\tau}) } \le f^0, \qquad  R^2 \ge \sigma_n \tau, \qquad z \frac{R^{\alpha_0} + \tau^{\frac{\alpha_0}2}}{R} \le c_1,
\]
where, since $h_n - \eps \ge h_0 / 2$,
\[
\eta_n = \frac{(\gamma-1)}{\gamma^{\gamma'}}\left(\frac1{(h_n-\eps)^{\gamma'}}-\frac1{(h_n+\eps)^{\gamma'}} \right) \le C_4 \eps.
\]
Here $f^0, c_1, C_2, C_3$ are universal constants. The first and the second condition are always satisfied for large $n$ (see Lemma \ref{lemwnholder} and \eqref{fnsmall}). Note that the absolute value of $|w_n(y_0,0) - w_n(0,0)|$ appears in \eqref{xest2}; this is a consequence of \eqref{xest0}, and \eqref{xest0} again if one exchanges the roles of $0$ and $y_0$ (which can be done eventually enlarging the domain from $Q_{R, \tau}$ to $Q_{R+1, \tau}$). The third and the fourth condition will be verified below.

\smallskip
We first aim to rule out case \cb. To do so, we fix
\[
z = 4C_2, \qquad \tau = 1
\]
and $R > 1$ large enough so that
\[
 \quad 4C_2 \frac{R^{\alpha_0} + 1}{R} \le \min\{1,c_1\},
\]
hence \eqref{test2} applies, yielding
\[
|w_n(0,0)-w_n(0,1)| \le C_2\left(1 +  4C_2 \frac{R^{\alpha_0} + 1}{R}  \right) \le 2C_2 = \frac{z}2.
\]
For large $n$, $Q_{R,1} \subset \tilde Q_n$, hence \eqref{bnorm} is impossible.

\smallskip

We now treat case \ca, with the same choice of $z$ as before, but possibly with larger $\tau, R$. First, pick $\tau \ge 1$ so that
\[
C_2 \tau^{\frac{\alpha_0}2}\le \left(\frac1{\den C_3}\right)^{\gamma}\tau, \qquad \frac1{\tau^{\gamma'-1}} \le \frac1{\den C_3},
\]
and $R$ large such that
\[
C_2\left( \tau \frac{R^{\alpha_0} + \tau^{\frac{\alpha_0}2}}{R}z  \right) \le \left(\frac1{\den C_3}\right)^{\gamma} \tau, \quad \left(\ 2 \left(\frac1{\den C_3}\right)^{\gamma} \tau \right)^{1/\gamma'} \frac{\tau^{1/\gamma}  }R \le \frac1{\den C_3}, \quad \frac{\tau}{R^2} \le \frac1{\den C_3}.
\]
Then, $n$ be large so that
\[
\left(2 \left(\frac1{\den C_3}\right)^{\gamma} \tau + \tau^{\alpha_0/2} \right)\|f_n\|_{L^{q_0}( \widetilde{Q}_{n,R+2,\tau}) } \le \frac1{\den C_3}, \qquad \text{and $Q_{R+2,\tau} \subset Q_n$}.
\]
With these choices, \eqref{test2} reads
\[
\Kcal  \le C_2\left(\tau^{\frac{\alpha_0}2}  +  \tau \frac{R^{\alpha_0} + \tau^{\frac{\alpha_0}2}}{R}z  \right) \le 2 \left(\frac1{\den C_3}\right)^{\gamma} \tau,
\]
whence
\[
\frac{\Kcal^{1/\gamma}} {\tau^{1/\gamma}} \le \frac{2^{1/\gamma}}{\den C_3}, \qquad \frac{\tau^{1/\gamma} \Kcal^{1/\gamma'} }R \le \frac1{\den C_3}, \qquad ( \Kcal + \tau^{\alpha_0/2} ) \|f_n\|_{L^{q_0}( \widetilde{Q}_{n,R+2,\tau}) } \le \frac1{\den C_3}.
\]
We can eventually pick $\eps > 0$ small (and consequently increase $n$, if needed) so that
\[
\eta_n \Kcal \le 2 C_4 \left(\frac1{\den C_3}\right)^{\gamma} \tau \eps \le \frac1{\den C_3}.
\]
Finally, we plug the previous inequalities into \eqref{xest2} to get
\[
|w_n(y_0,0) - w_n(0,0)| \le  \frac{2^{1/\gamma} + 5}{\den} \le \frac7{\den},
\]
contradicting \eqref{anorm}, so case \ca \,  is also impossible, and this proves Theorem \ref{alpha0reg}.

\begin{remark}[On the role of $h$] The constant $C$ in Theorem \ref{alpha0reg} depends on $h$ just through its lower and upper bounds $h_0, h_1$, and its modulus of continuity on $\overline Q$.
\end{remark}

\begin{remark}[On the assumptions]\label{remass} The proof of Theorem \ref{alpha0reg} can be carried out analogously for solutions of 
\[
-\partial_t u - \Delta D^2 u + H(x, t, Du) = 0
\]
under the following assumption on $H$: for some $0 < h_0 \le h_1$, for all $\eps > 0$ there exists $\delta = \delta (\eps)$ such that if $|Q| < \delta$, then
\[
(h_Q-\eps)|\xi|^\gamma - f_Q(x,t) \le H(x,t,\xi) \le (h_Q+\eps)|\xi|^\gamma + f_Q(x,t) \qquad \text{on $Q \times \R^N$}
\]
for some $h_0 \le h_Q \le h_1$ and $\|f_Q\|_{L^{q_0}(Q)} \le \eps$. Indeed, this is enough to guarantee \eqref{fnsmall}, \eqref{subsupwn}, and the rest of the proof remains unchanged. For example, 
\[
H(x,t,\xi) = h|\xi|^\gamma + b(x,t) \cdot \xi,
\]
where $b$ varies in a set of uniformly $L^{N+2}$-integrable function, satisfies the previous assumption. Note finally that we actually need \eqref{subsupwn} for \textit{some} $\eps$ small enough, hence discontinuous $h$ are allowed, provided that the local oscillation is smaller than some universal constant $\eps_0 > 0$.
\end{remark}

\section{A Liouville theorem}\label{sec:lio}

Let $q > q_0$, $v \in W_{q,loc}^{2,1}(\R^N \times (0,\infty))$ be a strong solution of 
\begin{equation}\label{hjv}
-\partial_t v - \Delta v + h |Dv|^\gamma = 0 \qquad \text{on $ \R^N \times (0,\infty)$},
\end{equation}
$h \neq 0$. The purpose of this section is to prove the following Liouville-type theorem for ``far future'' solutions  $v$ (which would be just ``ancient'' solutions if the equation was forward in time). Note that a standard bootstrap procedure shows that $v$ is actually smooth, up to $C^\infty$.

\begin{theorem}\label{liouville} Suppose that for some $0< \alpha < 1$,
\[
\sup_{(x,t) \neq (x',t') \in \R^N \times (0,\infty) } \frac{|v(x,t) - v(x',t')|}{|x-x'|^\alpha + |t-t'|^{\alpha/2}} < \infty.
\]
Then, $v$ is constant on $\R^N \times (0,\infty)$.
\end{theorem}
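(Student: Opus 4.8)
The plan is to deduce the statement from the oscillation estimate of Proposition~\ref{mainstima}, applied to (a rescaling of) $v$ with vanishing source $g\equiv0$, full diffusion $\sigma=1$ and matched Hamiltonian constants $h_0=h_1$, and then to send the spatial radius $R$ and the height $\tau$ of the reference cylinder to infinity in a carefully chosen order.

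First I would reduce to a normalized situation. Replacing $v$ by $-v$ turns \eqref{hjv} into the same equation with $h$ replaced by $-h$, so we may assume $h>0$; and multiplying $v$ by a positive constant changes $h$ into another nonzero constant, so, discarding the trivial case in which the global H\"older seminorm of $v$ vanishes, we may also assume
\[
\sup_{(x,t)\neq(x',t')\in\R^N\times(0,\infty)}\frac{|v(x,t)-v(x',t')|}{|x-x'|^\alpha+|t-t'|^{\alpha/2}}\le 3 .
\]
Then, fixing $t_*>0$ and $x_*\in\R^N$ and setting $w(y,s)=v(x_*+y,t_*+s)$, the function $w$ (a strong $W^{2,1}_{q,\loc}$ solution, since the equation is autonomous) satisfies \eqref{hja} and \eqref{hjb} on $B_{R+1}(0)\times(0,\tau)$ for every $R,\tau>1$ with $\sigma=1$, $g\equiv0$, $h_0=h_1=h$, and the normalization above yields the two H\"older bounds needed in Proposition~\ref{mainstima} with $z=1$ (using $\gamma>2$, so $3\le 3^{\gamma/2}$), while the smallness condition on $g$ in \eqref{condstima} is trivially satisfied.

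Next I would run the double limit. For $R,\tau\ge1$ with $\tau\le R^2$ one has $\tau^{\alpha/2}\le R^\alpha$, hence $z\,(R^\alpha+\tau^{\alpha/2})/R\le2R^{\alpha-1}\le c_1$ once $R$ is large (here $\alpha<1$ is used), so Proposition~\ref{mainstima} applies; with $g\equiv0$ and $\ell_0=\ell_1$, its conclusions \eqref{test0} and \eqref{xest0} become
\[
\Kcal\le C_2\Big(\tau^{\alpha/2}+\tau^{\alpha_0/2}+\tau^{\frac{\alpha}{\gamma-\alpha(\gamma-1)}}+2\,\tau\,R^{\alpha-1}\Big),
\]
\[
w(y_0,0)-w(0,0)\le C_3\Big(\frac{\Kcal^{1/\gamma}}{\tau^{1/\gamma}}+\frac1{\tau^{\gamma'-1}}+\frac{\tau^{1/\gamma}\Kcal^{1/\gamma'}}{R}+\frac{\tau}{R^2}\Big)
\]
for every $|y_0|\le1$. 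Keeping $\tau$ fixed and letting $R\to\infty$, the term $2\tau R^{\alpha-1}$ and the last two terms of the second inequality vanish, so
\[
w(y_0,0)-w(0,0)\le C_3\Big(\frac{\Psi(\tau)^{1/\gamma}}{\tau^{1/\gamma}}+\frac1{\tau^{\gamma'-1}}\Big),\qquad \Psi(\tau):=C_2\big(\tau^{\alpha/2}+\tau^{\alpha_0/2}+\tau^{\alpha/(\gamma-\alpha(\gamma-1))}\big).
\]
Since $\alpha<1$ and $\alpha_0=\tfrac{\gamma-2}{\gamma-1}<1$, the three exponents $\tfrac\alpha2,\ \tfrac{\alpha_0}2,\ \tfrac{\alpha}{\gamma-\alpha(\gamma-1)}$ are all $<1$ (for the last one, $\gamma-\alpha(\gamma-1)\ge1$ and $\alpha<1$), so $\Psi(\tau)/\tau\to0$ and $\tau^{-(\gamma'-1)}\to0$ as $\tau\to\infty$; letting $\tau\to\infty$ thus gives $w(y_0,0)-w(0,0)\le0$, i.e. $v(x_*+y_0,t_*)\le v(x_*,t_*)$ for every $|y_0|\le1$. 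Exchanging $x_*$ with $x_*+y_0$ (and $y_0$ with $-y_0$) yields the reverse inequality, so $v(\cdot,t_*)$ is constant on every unit ball and hence on $\R^N$ (chaining unit balls); as $t_*>0$ is arbitrary, $v$ depends on $t$ only, and plugging $v=v(t)$ into \eqref{hjv} (recall $v$ is smooth) forces $\partial_t v\equiv0$, so $v$ is constant.

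I expect the main obstacle to be precisely that \eqref{test0} controls $\Kcal$ only by \emph{sublinear} powers of $\tau$: this is where the hypothesis $\alpha<1$ (equivalently, sublinear spatial growth of $v$) is essential, and it is exactly why the conclusion must fail for solutions with linear growth in $x$ (cf.\ Remark~\ref{betterliou}). The accompanying bookkeeping — arranging that the ``boundary leakage'' contributions $\tau^{1/\gamma}\Kcal^{1/\gamma'}/R$ and $\tau/R^2$ go to zero by letting $R\to\infty$ \emph{before} $\tau\to\infty$, while checking that the smallness requirements \eqref{condstima} remain compatible with this order of limits — is routine but must be handled with care.
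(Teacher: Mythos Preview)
Your proof is correct and follows essentially the same approach as the paper's: normalize $v$ so that the $\alpha$-H\"older seminorm is at most $3$, apply Proposition~\ref{mainstima} with $g\equiv0$ and $h_0=h_1$, send $R\to\infty$ first and then $\tau\to\infty$ using that all the exponents $\alpha/2$, $\alpha_0/2$, $\alpha/(\gamma-\alpha(\gamma-1))$ are strictly less than $1$, and conclude that $v$ is constant in $x$ and hence in $t$. Your version is in fact slightly more explicit than the paper's in verifying the hypotheses \eqref{condstima} and in translating to an arbitrary base point $(x_*,t_*)$, but the argument is the same.
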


\begin{proof} The linear case $h = 0$ is proved in \cite[Theorem 1.2(b)]{SZ06}. Otherwise, w.l.o.g. we may assume that $h > 0$ (if $h < 0$ it is sufficient to replace $v$ by $-v$). We may also assume that 
\[
\sup_{(y,s), (y',s) \in  \R^N \times (0, \infty)} \frac{|v(y,s) - v(y',s)|}{|y-y'|^\alpha} \le 3, \qquad 
\sup_{(y,s), (y, \bar s) \in  \R^N \times (0, \infty)} \frac{|v(y,s) - v(y, s')|}{|s- s'|^{\alpha/2}} \le 3,
\]
which can be always verified after dividing $v$ by a suitable positive constant (and changing $h$ accordingly).

Then, using Proposition \ref{mainstima} with $g \equiv 0$ and $h_0 = h_1 = h$,
\begin{align*}
& v(x_0,0) - v(0,0) \le  C_3 \left( \frac{  \Kcal^{1/\gamma}} {\tau^{1/\gamma}} + \frac1{\tau^{\gamma'-1}}  +  \frac{\tau^{1/\gamma} \Kcal^{1/\gamma'} }R +  \frac{\tau}{R^2} \right), \\
& \qquad\qquad \Kcal  \le C_2\left( \tau^{\frac\alpha2} + \tau^{\frac{\alpha_0}2}  +  \tau^{\frac{\alpha}{\gamma-\alpha(\gamma-1)}} +  \tau \frac{R^\alpha + \tau^{\frac\alpha2}}{R}  \right),
\end{align*}
for any $R \ge R_0 = R_0(\tau)$ and $|x_0| \le 1$. If we let $R \to +\infty$, we obtain
\[
v(x_0,0) - v(0,0) \le  C_3 \left( \left(\frac{ \tau^{\frac\alpha2} + \tau^{\frac{\alpha_0}2}  +  \tau^{\frac{\alpha}{\gamma-\alpha(\gamma-1)}}   }{\tau}\right)^{1/\gamma} + \frac1{\tau^{\gamma'-1}} \right).
\]
Since $\alpha/2, \alpha_0/2, \alpha/(\gamma-\alpha(\gamma-1)) < 1$, as $\tau \to \infty$ we conclude that $v(x_0,0) - v(0,0) \le 0$ for any $|x_0| \le 1$. By exchanging the roles of $x_0$ and $0$ we get that $v(\cdot, 0)$ is constant. The same argument applies to $v(\cdot, t)$ for any $t > 0$, which allows to conclude that $v(\cdot, t)$ is constant for any $t > 0$. Since now $\partial_t v \equiv 0$ on $\R^N \times (0, \infty)$, the conclusion is reached.
\end{proof}

\begin{remark}[On the behavior of $v$]\label{betterliou} We note that the main requirement on $v$ is its behavior as $(|x|, t)$ goes to infinity. We could indeed get the same result assuming only, for $0 < \alpha < 1$ and $\beta > 0$,
\[
\sup_{(x,t), (x',t') \in \R^N \times (0,\infty) } \frac{|v(x,t) - v(x',t')|}{|x-x'|^\alpha + |t-t'|^\beta + 1} < \infty,
\]
using the arguments developed in the previous section. Note that the statement would be false if $\alpha = 1$ was allowed, just consider $v(x,t) = ht + x$.
\end{remark}

\begin{remark}[On the role of the diffusion] Since the right-hand side in \eqref{hjv} is zero, there is no real need of the regularizing effect of a nondegenerate diffusion. In other words, we can replace $\Delta v$ by $\tr A D^2 v$, $A \ge 0$, and the proof of the Liouville theorem works, at least \textit{formally}. To make it rigorous, one has then to be careful on the notion of solution for \eqref{hjv}, and the dual Fokker-Planck equation which is used in Proposition \ref{mainstima}.
\end{remark}

\section{$\alpha$-H\"older regularity}\label{sec:hol2}

\begin{theorem}\label{alphareg} Assume that $\|u\|_{L^\infty(Q)}, [u]_{\alpha_0; Q}, \|f\|_{L^q(Q)} \le K$. Then, there exists $C$ depending on $K, Q, h, q, N$ such that
\[
[u]^{\alpha-\alpha_0}_{\alpha; Q} \le C,
\] 
where $\alpha = 2 - \frac{N+2}q$.
\end{theorem}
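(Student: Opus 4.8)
The plan is to argue by contradiction and blow-up, in the spirit of \cite{CV}, using the Liouville Theorem \ref{liouville} as the obstruction; the scaling will be the \emph{linear} parabolic one $(x,t)\mapsto(\rho y,\rho^2 s)$, chosen so that the homogeneous Hamilton--Jacobi operator survives in the limit. Suppose the conclusion fails: there are solutions $u_n$ of \eqref{hj0} with $\|u_n\|_{L^\infty(Q)},[u_n]_{\alpha_0;Q},\|f_n\|_{L^q(Q)}\le K$ but $M_n:=[u_n]^{\alpha-\alpha_0}_{\alpha;Q}\to\infty$. I would pick near-maximizing pairs $P_n=(x_n,t_n),\ \bar P_n=(\bar x_n,\bar t_n)\in Q$ with $r_n:=|x_n-\bar x_n|+|t_n-\bar t_n|^{1/2}$ and $\dcal_n:=\min\{d(P_n,\partial^+Q),d(\bar P_n,\partial^+Q)\}$ so that $\tfrac{M_n}{2}\le\dcal_n^{\alpha-\alpha_0}\tfrac{|u_n(P_n)-u_n(\bar P_n)|}{r_n^{\alpha}}\le M_n$, normalizing $d(\bar P_n,\partial^+Q)=\dcal_n$ and $u_n(\bar P_n)=0$, and — crucially — choosing among such pairs one with (essentially) maximal separation $r_n$. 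Since $[u_n]_{\alpha_0;Q}\le K$ gives $|u_n(P_n)|\le K r_n^{\alpha_0}$, one gets $(\dcal_n/r_n)^{\alpha-\alpha_0}\ge M_n/(2K)\to\infty$, so $r_n/\dcal_n\to0$ and $r_n\to0$.

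Next I set $\rho_n:=|u_n(P_n)|^{1/\alpha_0}$ and define the blow-up
\[
w_n(y,s)=\frac{1}{\rho_n^{\alpha_0}}\,u_n\!\big(\bar x_n+\rho_n y,\ \bar t_n+\rho_n^2 s\big),\qquad (y,s)\in Q_n:=\frac{\Omega-\bar x_n}{\rho_n}\times\Big(-\frac{\bar t_n}{\rho_n^2},\ \frac{T-\bar t_n}{\rho_n^2}\Big),
\]
so $w_n(0,0)=0$ and $|w_n(y_n,s_n)|=\rho_n^{-\alpha_0}|u_n(P_n)|=1$, where $(y_n,s_n)=\big((x_n-\bar x_n)/\rho_n,(t_n-\bar t_n)/\rho_n^2\big)$. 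The key algebraic point is that $\alpha_0(\gamma-1)=\gamma-2$, so the viscous term does not degenerate: $w_n$ solves $-\partial_s w_n-\Delta w_n+h_n(y,s)|Dw_n|^{\gamma}=g_n$ on $Q_n$, with $h_n(y,s)=h(\bar x_n+\rho_n y,\bar t_n+\rho_n^2 s)\in[h_0,h_1]$ and $g_n(y,s)=\rho_n^{2-\alpha_0}f_n(\bar x_n+\rho_n y,\bar t_n+\rho_n^2 s)$; a change of variables gives, for every $R,\tau$,
\[
\|g_n\|_{L^q(Q_{R,\tau})}=\rho_n^{\,2-\alpha_0-\frac{N+2}{q}}\|f_n\|_{L^q(\widetilde Q)}=\rho_n^{\,\alpha-\alpha_0}\|f_n\|_{L^q(\widetilde Q)}\le K\rho_n^{\,\alpha-\alpha_0}\to0,
\]
since $q>q_0\Leftrightarrow\alpha>\alpha_0$. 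Moreover the global $\alpha_0$-seminorm is exactly preserved, $[w_n]_{\alpha_0;Q_n}=[u_n]_{\alpha_0;Q}\le K$ — which is exactly why the $\alpha_0$-H\"older input is needed here: it forces any limit to have sublinear growth. Finally, $\dcal_n/\rho_n\to\infty$ guarantees that the $Q_n$ exhaust $\R^N\times(0,\infty)$ (up to a harmless time translation; the distance-to-$\partial^+Q$ bookkeeping is the kind of annoying technicality the introduction already flags).

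Then I would apply Proposition \ref{prop:holdertoW2q} on a fixed cylinder: the uniform bounds on $\|w_n\|_{L^\infty(Q_{R+1,\tau+1})}$ (from $[w_n]_{\alpha_0}\le K$ and $w_n(0,0)=0$), on $[w_n]_{\alpha_0;Q_{R+1,\tau+1}}$ and on $\|g_n\|_{L^q}$ yield a uniform $W^{2,1}_q(Q_{R,\tau})$ bound — again using $q>q_0$. A diagonal argument, parabolic Sobolev embedding and Arzel\`a--Ascoli then give, along a subsequence, $w_n\to w$ in $C^1_{\mathrm{loc}}(\R^N\times(0,\infty))$ and $(\bar x_n,\bar t_n)\to(x_\infty,t_\infty)$, with $w\in W^{2,1}_{q,\mathrm{loc}}$ a strong solution of $-\partial_s w-\Delta w+h_\infty|Dw|^{\gamma}=0$ on $\R^N\times(0,\infty)$, $h_\infty=h(x_\infty,t_\infty)\ge h_0>0$, and $[w]_{\alpha_0;\R^N\times(0,\infty)}\le K$, hence $w$ of sublinear growth. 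By the Liouville Theorem \ref{liouville}, applied with the exponent $\alpha_0\in(0,1)$, $w$ is constant, so $w\equiv w(0,0)=0$.

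The contradiction should come from the base points: $|w_n(y_n,s_n)|=1$, and $(y_n,s_n)$ lies in the closed unit parabolic ball at parabolic distance $\ell_n:=r_n/\rho_n$ from the origin; from $|u_n(P_n)|\le K r_n^{\alpha_0}$ one immediately gets $\ell_n\ge K^{-1/\alpha_0}$, so the two points never collide. What is missing, and is the main obstacle of the proof, is the \emph{upper} bound $\ell_n\le C$: only then does $(y_n,s_n)$ stay in a fixed compact set, so that $|w(y_\infty,s_\infty)|=\lim_n|w_n(y_n,s_n)|=1\ne0$, contradicting $w\equiv0$. This is not implied by the a priori $\alpha_0$-bound alone (a spike at a tiny scale would violate $[u]^{\alpha-\alpha_0}_{\alpha;Q}$, and only the equation rules it out); it should be obtained by exploiting the maximal-separation choice of the pair together with the PDE, and — in the degenerate case $\ell_n\to\infty$ — a second rescaling by $\ell_n$, which makes the Hamiltonian vanish ($\ell_n^{2-\gamma}\to0$ since $\gamma>2$) and produces a nonconstant caloric function of sublinear growth, contradicting the linear parabolic Liouville theorem instead, all along the lines of \cite{CV}. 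Granting this, $w$ is a nonconstant entire solution of the homogeneous HJ equation with sublinear growth, which contradicts Theorem \ref{liouville}; hence $[u]^{\alpha-\alpha_0}_{\alpha;Q}$ is bounded by a constant depending only on $K,Q,h,q,N$.
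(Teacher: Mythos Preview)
Your overall strategy---contradiction, parabolic blow-up, and Liouville via Theorem~\ref{liouville}---is the paper's, but your choice of rescaling parameter manufactures the very obstacle you call ``the main obstacle of the proof'', and the paper sidesteps it by a simpler choice. You rescale by $\rho_n=|u_n(P_n)|^{1/\alpha_0}$, which keeps the Hamiltonian coefficient equal to $1$ but lets the companion point drift to parabolic distance $\ell_n=r_n/\rho_n$, possibly $\to\infty$. The paper rescales instead by $r_n$ and normalizes by $M_n=|u_n(P_n)|$, i.e.\ $w_n(y,s)=M_n^{-1}u_n(\bar x_n+r_ny,\bar t_n+r_n^2s)$; then the two points are \emph{always} at parabolic distance~$1$, so the limit is automatically nonconstant and no case analysis is needed. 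The price is that the Hamiltonian acquires the factor $\theta_n=M_n^{\gamma-1}/r_n^{\gamma-2}=(M_n/r_n^{\alpha_0})^{\gamma-1}$---but this is bounded precisely by the hypothesis $[u_n]_{\alpha_0;Q}\le K$, and Theorem~\ref{liouville} handles any limit $\theta_\infty\ge0$. Thus the role of the $\alpha_0$-input in the paper is to bound $\theta_n$, not to control the growth of the limit; that growth control comes instead from the near-maximality of the \emph{weighted $\alpha$-seminorm}, which yields $[w_n]_{\alpha;Q_R}\le3$ on every fixed cylinder (Lemma~\ref{lemwnholder2}).

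This last point also exposes a gap in your compactness step. Proposition~\ref{prop:holdertoW2q} requires $[v]_{\alpha}$ with the specific $\alpha=2-(N+2)/q$; its iteration needs $a\gamma<1$ with $a=1-q/(N+2)$, which at $(\alpha_0,q_0)$ is the borderline $a\gamma=1$ where the argument fails. So from $[w_n]_{\alpha_0}\le K$ alone you cannot invoke the proposition to get $W^{2,1}_q$ bounds as you claim. Your ``second rescaling by $\ell_n$'' in the degenerate case is literally the paper's scaling (since $\rho_n\ell_n=r_n$), and the H\"older control that survives it is again the $\alpha$-bound coming from the blowing-up weighted $\alpha$-seminorm---the $\alpha_0$-bound is \emph{not} preserved ($[\tilde w_n]_{\alpha_0}\sim\ell_n^{\alpha_0}\to\infty$). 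The ``maximal-separation choice of the pair'' does not by itself force $\ell_n$ bounded. In short: rescale by $r_n$ from the start, read off the $\alpha$-H\"older bound of the blow-up from near-maximality, and use the $\alpha_0$-hypothesis only to cap $\theta_n$; the detour through $\rho_n$ buys nothing.
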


\textbf{1. Set-up of the contradiction argument.} Assume by contradiction that for some sequence $\{f_n\}$ and $\{u_n\}$ solving \eqref{hj0},
\[
[u]^{\alpha-\alpha_0}_{\alpha; Q} \to \infty \quad \text{as $n \to \infty$}.
\]
Setting $L_n := [u]^{\alpha-\alpha_0}_{\alpha; Q} / 2$ there are sequences $\{(x_n,  t_n)\}, \{(\bar x_n, \bar t_n )\} \subset Q$ such that for every $n$, $(x_n, t_n) \neq (\bar x_n, \bar t_n)$ and
\[
L_n  \le [\min\{ d((x,t), \partial Q ), d((\bar x, \bar t), \partial Q ) \} ]^{\alpha-\alpha_0} \frac{|u(x,t) - u(\bar x, \bar t)|}{(|x-\bar x| + |t-\bar t|^{\frac12})^{\alpha}} \le 2 L_n.
\]
If we assume w.l.o.g. that $d((x_n,  t_n), \partial Q ) \ge d((\bar x_n , \bar t_n), \partial Q )$ and $u(\bar x_n, \bar t_n) = 0$, setting
\[
d_n = d((\bar x_n , \bar t_n), \partial Q ), \qquad M_n = |u_n(x_n, t_n)|, \qquad r_n = |x_n-\bar x_n| + |t_n-\bar t_n|^{\frac 12}.
\]
the previous inequality reads
\begin{equation}\label{Ln2}
L_n  \le d^{\alpha-\alpha_0}_n \frac{M_n}{r_n^\alpha} \le 2L_n.
\end{equation}

Since $\frac{M_n}{r_n^{\alpha_0}} \le K$ and $Q$ is bounded, we infer from \eqref{Ln2} that, as $n \to \infty$,
\begin{equation}\label{doverr2b}
\frac{d_n}{r_n} \to +\infty, \qquad r_n \to 0, \qquad \frac{r_n^\alpha}{M_n} \to 0. 
\end{equation}

\smallskip

\textbf{2. Scaling.} We now define the blow-up sequence
\[
w_n(y,s) = \frac{1}{M_n}\,u_n\left(\bar x_n + r_n y, \bar t_n + r_n^2 s\right), \qquad (y,s) \in \frac{\Omega - \bar x_n}{r_n} \times \left(-\frac{\bar t_n}{r_n^2}, \frac{T-\bar t_n}{r_n^2}\right) = Q_n.
\]

Let $(y_0,s_0)$ be such that $(x_n, t_n) = (\bar x_n + r_n y_0, \bar t_n + r_n^2 s_0)$. By the definition of $r_n$, $|y_0| + |s_0|^{1/2}= 1$. Recall also that $u(\bar x_n, \bar t_n) = 0$, so
\begin{equation}\label{norm2}
|w_n(y_0, s_0) - w_n(0,0)| = \frac1{M_n}|u_n(x_n, t_n) - u_n(\bar x_n,\bar t_n)| = 1.
\end{equation}
The sequence $w_n$ solves the following HJ equation
\begin{equation}\label{hjw2}
-\partial_s w_n - \Delta w_n +  \theta_n h(\bar x_n + r_n y, \bar t_n + r_n^2 s) |Dw_n|^\gamma = g_n \qquad \text{on $Q_n$},
\end{equation}
where
\[
\theta_n = \frac{M_n^{\gamma-1}}{r_n^{\gamma-2}}, \qquad g_n(y,s) =  \frac{r_n^2}{M_n}f_n\left(\bar x_n + r_n y, \bar t_n + r_n^2 s\right)
\]
Note that by the assumptions,
\begin{equation}\label{thetabound}
\theta_n^{\frac1{\gamma-1}} = \frac{|u_n(x_n, t_n)- |u_n(\bar x_n, \bar t_n)|}{(|x_n-\bar x_n| + |t_n-\bar t_n|^{\frac 12})^{\alpha_0}} \le K.
\end{equation}
Moreover, for any $R, \tau > 0$, a straightforward computation yields
\[
\|g_n\|_{L^q(Q_{R,\tau}) }= \frac{r_n^{2-\frac{N+2}{q}}}{M_n}\|f_n\|_{L^q( \widetilde{Q}_{n,R,\tau}) }, \quad \widetilde{Q}_{n,R,\tau} = B_{Rr_n} (\bar x_n) \times (\bar t_n, \bar t_n + r_n^2 \tau).
\]
Therefore, since $\|f_n\|_q \le K$ and $\frac{r^{2-\frac{N+2}{q}}}{M_n} = \frac{r^{\alpha}}{M_n} \to 0$ by \eqref{doverr2b}, then
\begin{equation}\label{gnsmall}
\|g_n\|_{L^q(Q_{R,\tau}) } \to 0 \qquad \text{as $n\to\infty$}.
\end{equation}

\begin{lemma}\label{lemwnholder2} For any $R>1$, we have for $n$ large enough that $\overline{Q}_{R}=\overline{B_R} \times [0, R^2] \subset Q_n$ and
\[
\sup_{(y,s), (y',s') \in \overline{Q}_{R}} \frac{|w_n(y,s) - w_n(y',s')|}{(|y-y'|+|s-s'|^{\frac12})^\alpha} \le 3.
\]
\end{lemma}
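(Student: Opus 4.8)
The plan is to repeat, almost verbatim, the argument of Lemma \ref{lemwnholder}; the only simplification is that here a single parabolic Hölder quotient must be controlled, rather than a space quotient and a time quotient separately. First I would verify the inclusion $\overline{Q}_R \subset Q_n$. Since $d_n = d((\bar x_n,\bar t_n),\partial Q)$ is a parabolic distance bounding from below both $d(\bar x_n,\partial\Omega)$ and $|T-\bar t_n|^{1/2}$, the first two limits in \eqref{doverr2b} give $d(\bar x_n,\partial\Omega)/r_n \to +\infty$ and $(T-\bar t_n)/r_n^2 \to +\infty$, so that $\overline{B_R}\times[0,R^2]$ is contained in $Q_n$ for all large $n$.

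Next I would unfold the scaling. Fix $(y,s),(y',s') \in \overline{Q}_R$ and set $(x,t) = (\bar x_n + r_n y,\,\bar t_n + r_n^2 s)$, $(x',t') = (\bar x_n + r_n y',\,\bar t_n + r_n^2 s')$, both in $Q$ once $\overline{Q}_R \subset Q_n$. Then $w_n(y,s)-w_n(y',s') = M_n^{-1}(u_n(x,t)-u_n(x',t'))$ and $|x-x'|+|t-t'|^{1/2} = r_n(|y-y'|+|s-s'|^{1/2})$, whence
\[
\frac{|w_n(y,s)-w_n(y',s')|}{(|y-y'|+|s-s'|^{1/2})^\alpha} = \frac{r_n^\alpha}{M_n}\cdot\frac{|u_n(x,t)-u_n(x',t')|}{(|x-x'|+|t-t'|^{1/2})^\alpha} \le \frac{r_n^\alpha}{M_n}\cdot\frac{2L_n}{[\min\{d((x,t),\partial Q),\,d((x',t'),\partial Q)\}]^{\alpha-\alpha_0}},
\]
using the definition of $L_n = [u_n]^{\alpha-\alpha_0}_{\alpha;Q}/2$.

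The single quantitative point is that the distance weight is essentially $d_n$ on $\overline{Q}_R$. By the triangle inequality for $d(\cdot,\partial\Omega)$, the elementary bound $\sqrt{a-b}\ge\sqrt a-\sqrt b$ (applied with $a = T-\bar t_n$, $b = r_n^2 s \le a$), and $|x-\bar x_n| = r_n|y|\le Rr_n$, $|t-\bar t_n|^{1/2} = r_n|s|^{1/2}\le Rr_n$, one gets $d((x,t),\partial Q) \ge d_n - 2Rr_n = d_n(1-\eps_n)$ with $\eps_n := 2Rr_n/d_n \to 0$ by \eqref{doverr2b}, and likewise at $(x',t')$. Inserting this and using \eqref{Ln2} — which yields $\frac{r_n^\alpha}{M_n}\cdot\frac{L_n}{d_n^{\alpha-\alpha_0}} \le 1$ — the right-hand side above is at most $2(1-\eps_n)^{-(\alpha-\alpha_0)}$, hence $\le 3$ for $n$ large (recall $\alpha-\alpha_0 < 1$). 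Taking the supremum over $(y,s),(y',s')\in\overline{Q}_R$ gives the claim.

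I do not expect a genuine obstacle here: the only minor care concerns the time estimate $|T-t|^{1/2}\ge|T-\bar t_n|^{1/2}-|t-\bar t_n|^{1/2}$ (legitimate since $t\le\bar t_n+R r_n^2<T$ once $\overline{Q}_R\subset Q_n$) and the uniformity of $\eps_n\to0$, both immediate from \eqref{doverr2b}. All the remaining steps are the same bookkeeping as in Lemma \ref{lemwnholder}.
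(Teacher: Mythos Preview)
Your proof follows the paper's almost line for line: the same scaling identity for the H\"older quotient, the same use of \eqref{Ln2}, and the same triangle-inequality bound $d((x,t),\partial^+ Q)\ge d_n-2Rr_n$ leading to the factor $2(1-2Rr_n/d_n)^{-(\alpha-\alpha_0)}\le 3$. One caveat on the inclusion step: since $d_n$ is the \emph{sum} $d(\bar x_n,\partial\Omega)+|T-\bar t_n|^{1/2}$, it bounds each summand from above, not below, so your deduction that $d(\bar x_n,\partial\Omega)/r_n\to\infty$ and $(T-\bar t_n)/r_n^2\to\infty$ separately from $d_n/r_n\to\infty$ is not justified as written; the paper's own proof is equally terse on this point, simply recording $d((0,0),\partial^+Q_n)=d_n/r_n\to\infty$ without separating the spatial and temporal contributions.
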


\begin{proof} First,
\[
d((0,0), \partial^+ Q_n) = d\left(0, \frac{\partial \Omega - \bar x_n}{r_n}  \right) +  \frac{(T-\bar t_n)^{1/2}}{r_n} = \frac{ d_n }{r_n} \to +\infty
\]
by \eqref{doverr2b}. 

Let now $(y,s), (y',s') \in \overline{Q}_{R}$ and assume that $d((y',s'), \partial^+ Q_n ) \ge d((y , s), \partial^+ Q_n )$. Then, writing
\[
(x,t)=\left(\bar x_n + r_n y, \bar t_n + r_n^2 s\right), \quad (x',t')=\left(\bar x_n + r_n y', \bar t_n + r_n^2 s'\right)
\]
we get
\begin{equation}\label{eq004}
\frac{|u_n(x,t) - u_n(x',t')|}{(|x-x'| + |t-t'|^{1/2})^\alpha} \le \frac{[u]^x_{\alpha; Q}}{d^{\alpha-\alpha_0}((x,t), \partial^+ Q)} \le \frac{2L_n}{ d^{\alpha-\alpha_0}((x,t), \partial^+ Q) }.
\end{equation}
Now, 
\begin{align}\label{eq0016}
\begin{split}
d_n = d((\bar x_n, \bar t_n), \partial^+ Q) & \le d(x, \partial \Omega) + (T-t)^{\frac12} + |x-\bar x_n| + |t-\bar t_n|^{\frac12} \\
& \le d(x, \partial \Omega) + (T-t)^{\frac12} + 2 r_n R \\
& = d((x, t), \partial^+ Q) + 2 r_n R.
\end{split}
\end{align}
Therefore,
\begin{multline*}
\frac{|w_n(y,s) - w_n(y',s')|}{(|y-y'| + |s-s'|^{1/2})^\alpha} = \frac{|u_n(x,t) - u_n(x',t')|}{(|x-x'| + |t-t'|^{1/2})^\alpha} \frac{r_n^\alpha}{M_n} \stackrel{\eqref{Ln2},\eqref{eq004}}{\le}
\frac{2L_n}{ d^{\alpha-\alpha_0}((x,t), \partial^+ Q) } \frac{d_n^{\alpha-\alpha_0}}{L_n} \\
\stackrel{\eqref{eq0016}}{\le} 2 \left(1- 2R \frac{r_n}{d_n}\right)^{-(\alpha-\alpha_0)} \le 3,
\end{multline*}
the last inequality being true for large $n$.
\end{proof}

\begin{lemma}\label{lemwnbound} For any $R>1$, there exists $C$ such that
\[
\|\partial_t w_n\|_{q,Q_{R}} + \|D^2 w_n\|_{q;Q_{R}} \le C. 
\]
for $n$ large enough.
\end{lemma}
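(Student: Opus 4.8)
The plan is to deduce the bound as an \emph{a priori} $W^{2,1}_q$ estimate for the rescaled functions, exploiting that $w_n$ solves \eqref{hjw2} with everything in sight under control for $n$ large: the factor $\theta_n h(\cdot)$ multiplying $|Dw_n|^\gamma$ is uniformly bounded by \eqref{thetabound}, the source satisfies $\|g_n\|_{L^q(Q_{R'})}\to 0$ by \eqref{gnsmall}, the diffusion is the nondegenerate Laplacian (no vanishing viscosity here, unlike in Section~\ref{sec:hol}), and by Lemma~\ref{lemwnholder2} together with $w_n(0,0)=0$ one has $[w_n]_{\alpha;Q_{R'}}\le 3$ and $\|w_n\|_{L^\infty(Q_{R'})}\le C$ for every fixed $R'>1$. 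This is an instance of the passage from $\alpha$-H\"older to $W^{2,1}_q$ bounds (Proposition~\ref{prop:holdertoW2q}); concretely, the argument runs as follows.

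First I would fix a nested family of cylinders $Q_R=Q^{(0)}\ssubset Q^{(1)}\ssubset Q^{(2)}\ssubset Q_n$ (the last inclusion holding for $n$ large, as in the proof of Lemma~\ref{lemwnholder2}, where one also has room on the past side of the time interval since $-\bar t_n/r_n^2\to-\infty$), multiply $w_n$ by a space-time cutoff, and apply the interior parabolic $L^q$ estimate for $-\partial_s-\Delta$ to the product. Moving the Hamiltonian term to the right-hand side and estimating the lower-order contributions of the cutoff, this yields, for radii $\rho<\rho'$ inside the fixed cylinder,
\[
\|w_n\|_{W^{2,1}_q(Q_\rho)}\le \frac{C}{(\rho'-\rho)^2}\Big(\|g_n\|_{L^q(Q_{\rho'})}+\theta_n\big\||Dw_n|^\gamma\big\|_{L^q(Q_{\rho'})}+\|w_n\|_{L^\infty(Q_{\rho'})}\Big),
\]
so the whole problem reduces to controlling $\|Dw_n\|_{L^{\gamma q}(Q_{\rho'})}$ (the first-order terms produced by differentiating the cutoff lie in $L^q$ because $L^{\gamma q}\hookrightarrow L^q$ on a bounded cylinder, $\gamma q>q$).

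The key step is an interpolation inequality. Since $q_0<q<N+2$, one has $\gamma q< p^\ast:=\tfrac{(N+2)q}{N+2-q}$ (equivalently $q\ge q_0$), the parabolic Sobolev conjugate $\tfrac1{p^\ast}=\tfrac1q-\tfrac1{N+2}$, so on a bounded cylinder a Gagliardo--Nirenberg inequality interpolating $Dw$ between the space of derivatives of $C^{\alpha,\sfrac\alpha2}$ and $W^{1,0}_q$ gives
\[
\|Dw_n\|_{L^{\gamma q}(Q_{\rho'})}\le C\,\|Dw_n\|_{L^{p^\ast}(Q_{\rho'})}\le C\,[w_n]_{\alpha;Q_{\rho''}}^{1-\theta}\,\|w_n\|_{W^{2,1}_q(Q_{\rho''})}^{\theta}+C\,[w_n]_{\alpha;Q_{\rho''}},
\]
where $\theta=\tfrac{1-\alpha}{2-\alpha}=1-\tfrac{q}{N+2}\in(0,1)$ (so that $Dw_n$ lands exactly at the critical smoothness $0$ in $L^{p^\ast}$). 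The crucial arithmetic is that
\[
\gamma\theta<1\quad\Longleftrightarrow\quad q>q_0=(N+2)\tfrac{\gamma-1}{\gamma},
\]
which is precisely the hypothesis of Theorem~\ref{alphareg}. Hence $\theta_n\||Dw_n|^\gamma\|_{L^q}=\theta_n\|Dw_n\|_{L^{\gamma q}}^\gamma\le C\big(1+\|w_n\|_{W^{2,1}_q(Q_{\rho''})}^{\gamma\theta}\big)$ with $\gamma\theta<1$ and $\theta_n\le K^{\gamma-1}$, so Young's inequality converts this into $\delta\,\|w_n\|_{W^{2,1}_q(Q_{\rho''})}+C_\delta$ for any $\delta>0$. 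Inserting this into the interior estimate (choosing $\delta$ small depending on $\rho'-\rho$, which makes $C_\delta$ grow only polynomially in $(\rho'-\rho)^{-1}$) and applying the standard iteration lemma --- if $\Psi(\rho)\le\tfrac12\Psi(\rho')+A(\rho'-\rho)^{-\beta}$ for all $Q_\rho\ssubset Q_{\rho'}$ inside a fixed cylinder then $\Psi(R)\le C_\beta A$ --- absorbs the $W^{2,1}_q$ term on the right and gives $\|w_n\|_{W^{2,1}_q(Q_R)}\le C$ with $C$ depending on $R,K,q,N,h$ but not on $n$, which contains the assertion.

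I expect the only delicate points to be bookkeeping: arranging the cutoff so that the backward-in-time interior $L^q$ estimate applies cleanly, and managing the mismatch between the cylinder on which the interior estimate outputs $\|w_n\|_{W^{2,1}_q}$ and the one on which the interpolation inequality needs it --- which is why one iterates over a nested family rather than absorbing in one step. The conceptual heart, and the only place where $q>q_0$ (rather than $q\ge q_0$) is genuinely used, is the strict inequality $\gamma\theta<1$ in the interpolation step; at $q=q_0$ one has $\gamma q=p^\ast$ and $\theta=1$, the interpolation degenerates to the plain embedding $W^{2,1}_q\hookrightarrow W^{1,0}_{p^\ast}$, the absorption fails, and this borderline case must instead be handled by the completely different blow-up argument of Section~\ref{sec:hol}.
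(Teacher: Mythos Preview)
Your approach is essentially the paper's: verify that $w_n$ satisfies $|-\partial_s w_n-\Delta w_n|\le c_2|Dw_n|^\gamma+g_n$ with $[w_n]_{\alpha;Q_{2R}}$ and $\|g_n\|_{L^q(Q_{2R}) }$ uniformly bounded (using \eqref{thetabound}, \eqref{gnsmall}, Lemma~\ref{lemwnholder2}), then invoke Proposition~\ref{prop:holdertoW2q}; the paper does exactly this in three lines, while you additionally re-sketch the proof of that proposition. One small inaccuracy: your parenthetical claim that $-\bar t_n/r_n^2\to-\infty$ is neither established nor needed, since the cylinders $Q_{R'}=B_{R'}\times(0,(R')^2)$ start at time $0$ and only require room on the future side, which is what Lemma~\ref{lemwnholder2} provides.
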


\begin{proof} First, $n$ need to be large so that $Q_{2R} \subset Q_n$ (which is possible in view of the previous lemma). Moreover, by \eqref{gnsmall}, again for large $n$ one has $\|g_n\|_{L^q(Q_{2R}) } \le 1$. Since $w_n$ solves the HJ equation \eqref{hjw2},
\[
| -\partial_s w_n - \Delta w_n| = | -  \theta_n h(\bar x_n + r_n y, \bar t_n + r_n^2 s) |Dw_n|^\gamma + g_n| \stackrel{\eqref{thetabound}}{\le} h_1 K^{\gamma-1}|Dw_n|^\gamma + 1,
\]
the conclusion follows by invoking Proposition \ref{prop:holdertoW2q}.
\end{proof}

\textbf{3. The limit problem.} We now pass to the limit in the equation \eqref{hjw2}. Limits below are up to subsequences, and they are constructed via a standard diagonal procedure. First, by Lemmata \ref{lemwnholder2} and \ref{lemwnbound}, $w_n$ has (local) weak limits in $W^{2,1}_q$ on $R^N\times(0,\infty)$.

We may assume that $\bar x_n$ and $\theta_n$ converge to $\bar x_\infty$ and $\theta_\infty$ respectively. Note that standard parabolic embeddings guarantee that $Dw_n$ converge (locally) strongly in $L^p$ for any $p < \frac{(N+2)q}{N+2-q}$. Therefore, as $\gamma q <  \frac{(N+2)q}{N+2-q}$,
\[
\theta_n h(\bar x_n + r_n y, \bar t_n + r_n^2 s) |Dw_n|^\gamma \to \theta_\infty h(\bar x_\infty, \bar t_\infty) |Dw|^\gamma
\]
locally strongly in $L^q$, and the same holds for $g_n$ (see \eqref{gnsmall}). Finally, $w_n$ converges locally uniformly to some $w$ which cannot be constant on $Q_2$ in view of \eqref{norm2}.

Hence, the limit $w \in W^{2,1}_{q,loc}(\R^N \times (0,\infty))$ is a nonconstant, strong solution of
\[
-\partial_s w - \Delta w + \theta_\infty h(\bar x_\infty, \bar t_\infty) |Dw|^\gamma = 0 \qquad \text{on $\R^N \times (0,\infty)$}.
\]
It also satisfies
\[
\sup_{(y,s), (y',s')} \frac{|w(y,s) - w(y',s')|}{(|y-y'|+|s-s'|^{\frac12})^\alpha} \le 3,
\]
hence the Liouville Theorem \ref{liouville} applies, contradicting the fact that $w$ cannot be constant.

\section{Proof of Theorem \ref{maxreg} and final remarks}\label{sec:max}

We now conclude with the proof the main theorem on maximal regularity, and list a few remarks.

\begin{proof}[Proof of Theorem \ref{maxreg}] Fix any $Q'', Q'''$ such that $Q' \ssubset Q'' \ssubset Q''' \ssubset Q$. Note that since $q > q_0$ and $Q$ is bounded, functions satisfying $\|f\|_{q; Q}  \le K$ belong to a uniformly integrable set of $L^{q_0}(Q)$. Therefore, Theorem \ref{alpha0reg} yields the existence of $C$ such that
\[
\frac{|u(x,t) - u(\bar x,t)|}{|x-\bar x|^\alpha} + \left( \frac{|u(x,t) - u(x,\bar t)|}{|t-\bar t|^{\sfrac{\alpha}2}} \right)^{\sfrac2\gamma} \le C
\]
for any $(x,t), (x, \bar t), (\bar x, t) \in Q'''$. Hence Theorem \ref{alphareg} applies on $Q'''$, and for some $C' > 0$,
\[
[u]_{\alpha; Q''} \le C'.
\]
Finally, one achieves the desired estimate by Proposition \ref{prop:holdertoW2q}.
\end{proof}

\begin{remark}[$q \ge  N+2$ and Lipschitz estimates]\label{beyondNplus2} If $q \ge  N+2$, then the same conclusion of Theorem \ref{maxreg} holds using a standard bootstrap argument. Choose indeed any $(N+2) \frac{\gamma+1}\gamma < \tilde q < N+2$, and apply Theorem \ref{maxreg} to control $u$ in $W^{2,1}_{\tilde q}$. Therefore, by Sobolev embeddings one has a bound of $|Du|^\gamma$ in $L^p$, where $1/p = 1/\tilde{q} - 1/(N+2)$, and $p > N+2$. Hence, linear parabolic regularity allows to control $u$ in $W^{2,1}_{\min\{p,q\}}$ on a smaller set, which in turn yields \textit{Lipschitz} estimates. A further application of parabolic regularity gives finally bounds in $W^{2,1}_{q}$.

Note that this extends Lipschitz estimates to the full range $q > N+2$. Previously, these were known \cite{CG2} up to $q > \frac{N+2}{2(\gamma'-1)}$ (when $\gamma > 3$).
\end{remark}

\begin{remark}[On the sign of $f$]\label{fsign} Since $f$ can be unbounded, its sign plays a major role. If one has in mind that $f$ appears in the running cost of an optimal control problem, then optimal trajectories are pushed away by \textit{positive} singular points of $f$, while they are strongly attracted by \textit{negative} ones. Heuristically, the former affects the value function $u$ less than the latter. If one assumes $f \ge 0$, then it shown in \cite{CSil} that
\[
f \in L^q, \qquad q >  1 + \frac{N}{\gamma}
\]
is sufficient for $u$ to be H\"older continuous, even in first-order problems. Note that if $\gamma > 2$, then $1 + \frac{N}{\gamma}$ is always smaller than our threshold $q_0$, and it even improves as $\gamma$ increases.

Here, we do not assume any sign condition on $f$. It is not clear whether the critical integrability $q_0$ is related just to the behavior of the negative part of $f$, but we believe that for some $f \notin L^{q_0}$, $u$ is bounded from below. This is strictly related to exponent $q_0'$ appearing in \eqref{mLq0}, which is ``natural'' by invariance properties of the Fokker-Planck equation, but its sharpness is not clear, as no counterexamples below $q_0'$ are available yet.

Not that if no sign condition is imposed on $f$, then our results hold also for the so-called ``repulsive'' case $h < 0$. By the methods developed here, it is not clear if they can be extended to arbitrary (possibly vanishing) $h$.
\end{remark}


\appendix
\section{Miscellaneous estimates}\label{onthefp}

We first start with some estimates on solutions to Fokker-Planck equations. Let $\sigma > 0$, $b$ a measurable vector field such that $b \in L^{N+2}(Q_{R,\tau})$ and $\divv b \in L^{\frac{N+2}{2}}(Q_{R,\tau})$. Below, $m$ be a  solution of
\begin{equation}\label{FP}
\begin{cases}
\partial_t m - \sigma \Delta m - \divv(b m) = 0& \text{on $B_R \times (0,\tau)$} \\
m(x,t) = 0 & \text{on $\partial B_R \times (0,\tau)$} \\
m|_{t=0}= \delta_0
\end{cases}
\end{equation}
Such a solution can be constructed by approximation, that is, taking limits of classical solutions with smooth approximating drifts $b_k$ and smooth initial data $m_k(0)$. Due to the singular nature of $m(0)$, $m \in C([0, \tau]; \Mcal(\Omega))$ is a distributional solution. Furthermore, $m$ is on $Q_{R,\tau}$ a strong solution, since classical parabolic Calder\'on-Zygmund estimates apply away from $t=0$. In particular, $m \in W^{2,1}_{\tilde q}(B_R \times (\tau',\tau))$ for any $0< \tau' < \tau$ and $1 < \tilde q < (N+2)/2$ (see for example \cite[Theorem IV.9.1]{LSU}). Moreover, $m \in L^{q_0}(Q_{R,\tau})$, since the estimate \eqref{mLq0} below ``sees'' only the $L^1$-norm of the initial datum. Finally, the trace of $m(t)$ on $\partial B_R$ is well defined for a.e. $t$, and it belongs  to $W^{2-1/\tilde q}_{\tilde q}(\partial B_R)$.

\smallskip
We list several properties of $m$. First, $- Dm \cdot \nu \ge 0$ a.e. on $\partial B_R \times (0, \tau)$. Indeed, $m$ is a (limit of) non-negative solution vanishing on the lateral boundary $\partial B_R  \times (0, \tau)$, hence $Dm \cdot \zeta \le 0$ on $\partial B_R \times (0, \tau)$ for any $\zeta \cdot \nu > 0$, and clearly $\nu \cdot \nu >0$. Moreover, for a.e. $s \in (0, \tau]$,
\begin{equation}\label{massloss}
\int_{B_R} m(x,s) \dx \le \int_{B_R} m(x,s) \dx - \sigma \iint_{ \partial B_R \times (0,s) } Dm \cdot \nu \dxdt = 1.
\end{equation}
Note that $m$ is the density of a killed Brownian particle starting from $x = 0$ at time $t = 0$, 
on the domain $B_R$). The term $- \sigma \iint Dm \cdot \nu$ represents the probability that the particle has reached $\partial B_R$ before $t  = \tau$. We will estimate below such probability, as a corollary of the following lemma.

\begin{lemma}\label{bdglemma} Assume that  $|b| m \in L^1(Q_{R,\tau})$, and let  $\alpha \in (0,1)$. Then, for some $C$ depending on $\alpha$,
\[
\int_{B_R} |x|^\alpha m(x,\tau) \dx \le  C \left( \iint_{B_R  \times (0,\tau) } |b| m \dxdt  \right)^\alpha+ C \left(\sigma \tau \right)^{\alpha/2}.
\]
\end{lemma}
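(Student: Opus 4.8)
The plan is to estimate $\int_{B_R} |x|^\alpha m(x,\tau)\,\dx$ by introducing the stochastic process whose density is $m$: namely, the killed diffusion $dX_s = b(X_s,s)\,ds + \sqrt{2\sigma}\,dB_s$ with $X_0 = 0$, stopped at the exit time $\tau_R$ from $B_R$. Then $\int_{B_R} |x|^\alpha m(x,\tau)\,\dx = \mathbb{E}\big[|X_\tau|^\alpha \mathbf{1}_{\{\tau < \tau_R\}}\big] \le \mathbb{E}\big[|X_\tau|^\alpha\big]$ (extending $X$ past the killing time if convenient, or just using the inequality). Since $X_\tau = \int_0^\tau b(X_s,s)\,ds + \sqrt{2\sigma}\,B_\tau$ (on the event the particle is still alive), we split $|X_\tau| \le \int_0^\tau |b(X_s,s)|\,ds + \sqrt{2\sigma}\,|B_\tau|$ and use $(a+b)^\alpha \le a^\alpha + b^\alpha$ for $\alpha \in (0,1)$ to get
\[
\int_{B_R}|x|^\alpha m(x,\tau)\,\dx \le \mathbb{E}\Big[\Big(\int_0^\tau |b(X_s,s)|\,ds\Big)^\alpha\Big] + (2\sigma)^{\alpha/2}\,\mathbb{E}\big[|B_\tau|^\alpha\big].
\]

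For the second term, $\mathbb{E}[|B_\tau|^\alpha] \le (\mathbb{E}|B_\tau|^2)^{\alpha/2} = (N\tau)^{\alpha/2}$ by Jensen, giving the $C(\sigma\tau)^{\alpha/2}$ contribution. For the first term, by Jensen's inequality applied with the concave function $r \mapsto r^\alpha$ (again using $\alpha < 1$),
\[
\mathbb{E}\Big[\Big(\int_0^\tau |b(X_s,s)|\,ds\Big)^\alpha\Big] \le \Big(\mathbb{E}\int_0^\tau |b(X_s,s)|\,ds\Big)^\alpha = \Big(\iint_{B_R\times(0,\tau)} |b|\,m\,\dxdt\Big)^\alpha,
\]
where the last equality is precisely the fact that $m(\cdot,s)$ is the (sub)density of $X_s$ on $B_R$ — i.e. $\mathbb{E}\big[\phi(X_s)\mathbf{1}_{\{s<\tau_R\}}\big] = \int_{B_R}\phi\,m(\cdot,s)$ — applied to $\phi = |b(\cdot,s)|$ and integrated in $s$. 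This yields exactly the claimed bound.

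Alternatively — and this is probably the cleaner route to keep the paper self-contained and avoid invoking probabilistic machinery for a merely distributional/approximate solution — I would argue by PDE duality. Approximate $m$ by smooth solutions $m_k$ with smooth drifts $b_k$ and mollified initial data, prove the estimate for $m_k$ with constants independent of $k$, and pass to the limit. For the smooth problem, test the equation for $m_k$ against the solution $\psi$ of the backward heat equation $-\partial_s\psi - \sigma\Delta\psi = 0$ on $B_R\times(0,\tau)$ with terminal data $\psi(\cdot,\tau) = |x|^\alpha$ (truncated near the boundary if needed), or more directly differentiate $s \mapsto \int_{B_R} \psi(x,s)\,m_k(x,s)\,\dx$ for a suitable $\psi$ dominating $|x|^\alpha$. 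Using that $\psi$ solving the heat equation with data $|x|^\alpha$ satisfies the gradient bound $|D\psi(x,s)| \le C(|x|^{\alpha-1} + (\sigma(\tau-s))^{(\alpha-1)/2})$ and $\psi(0,0) \le C(\sigma\tau)^{\alpha/2}$, one gets
\[
\int_{B_R}|x|^\alpha m_k(x,\tau) = \psi(0,0) + \iint_{B_R\times(0,\tau)} D\psi\cdot b_k\,m_k\,\dxdt - \sigma\!\!\iint_{\partial B_R\times(0,\tau)}\!\!\psi\, Dm_k\cdot\nu,
\]
the boundary term has a good sign (since $\psi \ge 0$ and $-Dm_k\cdot\nu \ge 0$), and the drift term is controlled by $\sup|D\psi|\cdot\iint|b_k|m_k$; then an interpolation/Young argument converting the weight $|x|^{\alpha-1}$ against $m_k$ back into the quantities on the right-hand side closes the estimate.

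The main obstacle I anticipate is making the drift term estimate rigorous: in the probabilistic approach it is the identification of $m$ as the subprobability density of the killed diffusion (valid by the approximation construction already described in the Appendix, but requiring that $|b|m \in L^1$, which is assumed), and in the PDE approach it is controlling the singular weight $|x|^{\alpha-1}$ — which is not bounded — against $m$ near the origin. The cleanest fix is to note $|D\psi|$ is genuinely integrable against $m$ because $\int_{B_R}|x|^{\alpha-1} m(x,s)\,\dx$ can itself be bounded (e.g. by the same scheme with exponent $\alpha$ replaced by a value in $(0,1)$, or simply by the heat-kernel bound $m(x,s) \lesssim (\sigma s)^{-N/2}e^{-c|x|^2/\sigma s}$ plus the drift correction), or to regularize $|x|^\alpha$ to $(|x|^2+\delta)^{\alpha/2}$, run the argument with the smooth weight, and let $\delta \to 0$. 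Given that the paper favors the adjoint-method style, I would present the PDE-duality version with the $(|x|^2+\delta)^{\alpha/2}$ regularization, but the probabilistic one-liner via Jensen is the conceptual heart.
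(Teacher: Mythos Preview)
Your probabilistic argument via Jensen is correct and is a genuinely different route from the paper. The paper proceeds by a layer-cake decomposition: it tests the equation for $m$ against cutoff functions $\chi_y(x)=\chi(|x|/y)$ (with $\chi\equiv 1$ on $[1,\infty)$, $\chi(0)=0$) to obtain the tail bound
\[
\int_{|x|\ge y} m(x,\tau)\,dx \le \min\Big\{ \frac{C\sigma\tau}{y^2}+\frac{C}{y}\iint|b|m,\ 1\Big\},
\]
and then writes $\int|x|^\alpha m = \alpha\int_0^R y^{\alpha-1}\big(\int_{|x|\ge y} m\big)\,dy$ and integrates this pointwise bound. Your approach is shorter and conceptually cleaner (the concavity of $r\mapsto r^\alpha$ does all the work), at the cost of invoking the stochastic representation, which---as you note---needs the approximation already built into the appendix to be justified for merely distributional $m$. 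The paper's approach is entirely analytic and avoids this, and it has a further payoff: the intermediate tail inequality (the display above, with $y=R$) is exactly what the paper uses to derive the boundary-loss estimate $-\sigma\iint_{\partial B_R} Dm\cdot\nu \le C\tau^{1/\gamma}R^{-1}(\iint|b|^{\gamma'}m)^{1/\gamma'}+C\sigma\tau R^{-2}$, which is needed repeatedly in the oscillation estimates. Your probabilistic proof of the lemma does not directly yield this byproduct, so if you adopt it you would still need a separate argument (or the cutoff test with $y=R$) to get that bound. Your second sketched route, testing against the backward heat solution with terminal datum $|x|^\alpha$, is closer in spirit to the paper but, as you correctly anticipate, the unbounded gradient $|D\psi|\sim|x|^{\alpha-1}$ makes the drift term awkward; the paper sidesteps this entirely by using bounded test functions and the layer-cake formula.
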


\begin{proof} Let $\chi \in C^\infty([0,+\infty); [0,+\infty) )$ be such that $\chi(r) \equiv 1$ for all $r \ge 1$ and $\chi(0)=0$. For $0 < y \le R$, test the equation for $m$ by $\chi_y(x) = \chi(|x|/y)$ to get
\begin{multline*}
\int_{B_R} \chi_y(x) m(x, \tau) \dx - \sigma \iint_{\partial B_R  \times (0,\tau) }  \chi_y Dm \cdot \nu \dxdt = \\ =
\sigma \iint_{ B_R  \times (0,\tau) } m \Delta \chi_y \dxdt - \iint_{ B_R  \times (0,\tau) }m b \cdot D \chi_y  \dxdt,
\end{multline*}
whence
\begin{equation}\label{eqzz} 
\int_{|x| \ge y} m(x, \tau) \dx - \sigma \iint_{\partial B_R  \times (0,\tau) }  Dm \cdot \nu \dxdt
\le  \frac{C\sigma \tau }{y^2} + \frac C y  \iint_{ B_R  \times (0,\tau) }|b| m \dxdt .
\end{equation}
In particular, by \eqref{massloss}, setting $k_1 = C\sigma \tau$ and $k_2 = C  \iint_{ B_R  \times (0,\tau) }|b| m$,
\begin{equation}\label{eqzz2}
\int_{|x| \ge y} m(x, \tau) \dx \le \min \left\{ \frac {k_1}{y^2} + \frac {k_2} y , 1 \right\}.
\end{equation}
Now,
\begin{multline*}
\int_{B_R} |x|^\alpha m(x,\tau) \dx = \alpha \int_{B_R} \int_0^{|x|} y^{\alpha-1} m(x,\tau) \dy \dx = \alpha  \int_0^R y^{\alpha-1} \int_{|x| \ge y}  m(x,\tau) \dx \dy \\
\stackrel{\eqref{eqzz2}}{\le} \alpha \int_0^R y^{\alpha-1} \min \left\{  {k_1} {y^{-2}} , 1 \right\} \dy + \alpha \int_0^R y^{\alpha-1} \min \left\{ k_2 {y^{-1}} , 1 \right\} \dy \\
\le k_1 \alpha  \int_{k_1^{1/2}}^{+\infty} y^{\alpha-3}  \dy + \alpha  \int_0^{k_1^{1/2}} y^{\alpha-1}  \dy + k_2 \alpha  \int_{k_2}^{+\infty} y^{\alpha-2}  \dy + \alpha  \int_0^{k_2} y^{\alpha-1}  \dy \\
= \alpha \frac{k_1^{\alpha/2}}{2-\alpha} + k_1^{\alpha/2} + \alpha \frac{k_2^{\alpha}}{1-\alpha} + k_2^{\alpha}.
\end{multline*}
Substituting $k_1,k_2$ gives the assertion.
\end{proof} 

As a consequence \eqref{massloss} and \eqref{eqzz} (with $y = R$), we have
\begin{equation}\label{boundaryloss}
- \sigma \iint_{\partial B_R  \times (0,\tau) } Dm \cdot \nu \dxdt \le C \frac{\tau^{1/\gamma}}R \left( \iint_{B_R  \times (0,\tau) } |b|^{\gamma'} m \dxdt \right )^{1/\gamma'}+ C \frac{\sigma \tau}{R^2}
\end{equation}
and by the previous lemma,
\begin{equation}\label{bdglemma2}
\int_{B_R} |x|^\alpha m(x,\tau) \dx \le  C \tau^{\alpha/\gamma}   \left( \iint_{B_R  \times (0,\tau) } |b|^{\gamma'} m \dxdt  \right)^{\alpha/\gamma'}+ C \left(\sigma \tau \right)^{\alpha/2}
\end{equation}

\begin{lemma}\label{lemmamq0} Suppose that $R^2 \ge \tau \sigma$. There exists $C$ depending on $q_0$ such that
\begin{equation}\label{mLq0}
\sigma^{\gamma'\frac{N+1}{N+2}}\, \|m\|_{L^{q'_0}(Q_{R,\tau})} \le C\left( \iint_{ B_R  \times (0,\tau) } |b|^{\gamma'} m \dxdt + \sigma^{\gamma'/2}\tau^{\alpha_0/2} \right).
\end{equation}
\end{lemma}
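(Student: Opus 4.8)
The plan is to prove \eqref{mLq0} by a duality argument against the adjoint (backward) heat equation, paying the loss coming from the drift $b$ by an absorption step. Since $m\ge 0$ one has
\[
\|m\|_{L^{q_0'}(Q_{R,\tau})} = \sup\Big\{ \iint_{Q_{R,\tau}} m\psi\,\dyds \ :\ \psi\ge 0,\ \|\psi\|_{L^{q_0}(Q_{R,\tau})}\le 1\Big\},
\]
so I would fix such a $\psi$ and let $\phi\ge 0$ solve $-\partial_s\phi-\sigma\Delta\phi=\psi$ on $B_R\times(0,\tau)$ with $\phi=0$ on $\partial B_R\times(0,\tau)$ and $\phi(\cdot,\tau)=0$ (admissible since $q_0>1+\tfrac N2$, whence $\phi\in W^{2,1}_{q_0}\cap C$). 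Testing the equation for $m$ by $\phi$ and that for $\phi$ by $m$ on $Q_{R,\tau}$ and integrating by parts (after a routine approximation near $t=0$): all lateral terms drop because $\phi=m=0$ on $\partial B_R\times(0,\tau)$, the terminal term drops because $\phi(\cdot,\tau)=0$, and the initial term produces $\phi(0,0)$ because $m(0)=\delta_0$, so that
\[
\iint_{Q_{R,\tau}} m\psi\,\dyds = \phi(0,0) - \iint_{Q_{R,\tau}} m\,D\phi\cdot b\,\dyds \le \phi(0,0) + \iint_{Q_{R,\tau}} m\,|D\phi|\,|b|\,\dyds .
\]

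Next I would bound the two terms on the right. For $\phi(0,0)$: the representation formula for $\phi$ and the pointwise Gaussian bound on the Dirichlet heat kernel give $0\le\phi(0,0)\le\iint_{Q_{R,\tau}}\Gamma_\sigma\psi$ with $\Gamma_\sigma$ the free heat kernel; a direct computation of the Gaussian $L^{q_0'}$-norm (integrable in $s$ near $0$ precisely because $q_0>1+\tfrac N2$, and with the exponents arranged via $\gamma'=2-\alpha_0$) then yields $\phi(0,0)\le C\sigma^{-\frac{N\gamma'}{2(N+2)}}\tau^{\alpha_0/2}$. For the drift term I would apply H\"older twice — first splitting $m|D\phi||b|=(|b|^{\gamma'}m)^{1/\gamma'}(|D\phi|^\gamma m)^{1/\gamma}$, then pairing $m$ with $|D\phi|^\gamma$ in $L^{q_0'}\times L^{q_0}$ — to get, with $\Kcal=\iint_{Q_{R,\tau}}|b|^{\gamma'}m$,
\[
\iint_{Q_{R,\tau}} m\,|D\phi|\,|b| \le \Kcal^{1/\gamma'}\Big(\|m\|_{L^{q_0'}(Q_{R,\tau})}\,\|D\phi\|_{L^{\gamma q_0}(Q_{R,\tau})}^{\gamma}\Big)^{1/\gamma}.
\]

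The heart of the matter, and where the hypothesis $R^2\ge\sigma\tau$ enters, is the scale-invariant bound $\|D\phi\|_{L^{\gamma q_0}(Q_{R,\tau})}\le C\sigma^{-\frac{N+1}{N+2}}\|\psi\|_{L^{q_0}(Q_{R,\tau})}$ with $C$ independent of $R,\tau$. I would rescale time, $\hat\phi(y,\hat s)=\phi(y,\hat s/\sigma)$, so that $\hat\phi$ solves $-\partial_{\hat s}\hat\phi-\Delta\hat\phi=\sigma^{-1}\hat\psi$ with zero terminal and lateral data on $B_R\times(0,\sigma\tau)$, which has $\sigma\tau\le R^2$; on such cylinders the parabolic Calder\'on--Zygmund estimate together with the \emph{endpoint} embedding $W^{2,1}_{q_0}\hookrightarrow W^{1,0}_{\gamma q_0}$ (note $\gamma q_0=(N+2)(\gamma-1)$, so $\tfrac1{\gamma q_0}=\tfrac1{q_0}-\tfrac1{N+2}$) holds with a constant depending only on $N,q_0$, reducing if necessary to the whole-space estimate by Gaussian domination of the Dirichlet heat kernel and the parabolic Hardy--Littlewood--Sobolev inequality. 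Undoing the scaling and using $\tfrac1{q_0}-\tfrac1{\gamma q_0}-1=\tfrac1{N+2}-1$ gives the claimed bound. Finally, writing $X=\|m\|_{L^{q_0'}(Q_{R,\tau})}$ (finite by the a priori regularity of $m$ recalled above) and collecting the estimates,
\[
X \le C\sigma^{-\frac{N\gamma'}{2(N+2)}}\tau^{\alpha_0/2} + C\,\Kcal^{1/\gamma'}\sigma^{-\frac{N+1}{N+2}}X^{1/\gamma};
\]
Young's inequality absorbs $\tfrac12 X$ and leaves a term $C'\Kcal\,\sigma^{-\gamma'\frac{N+1}{N+2}}$, so multiplying through by $\sigma^{\gamma'\frac{N+1}{N+2}}$ and using $\gamma'\frac{N+1}{N+2}-\frac{N\gamma'}{2(N+2)}=\tfrac{\gamma'}2$ gives \eqref{mLq0}. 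The main obstacle is exactly the scale-invariant gradient estimate for $\phi$: pinning down the $\sigma$-power while keeping the constant independent of $R,\tau$ is what forces $R^2\ge\sigma\tau$, and the fact that $\gamma q_0$ is the Sobolev conjugate of $q_0$ is what lets the H\"older chain close without slack.
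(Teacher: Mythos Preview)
Your argument is correct and is essentially the paper's own proof: both test the Fokker--Planck equation against the solution of the backward heat equation with right-hand side in $L^{q_0}$, apply the same three-exponent H\"older splitting $1=\tfrac1{\gamma'}+\tfrac1{\gamma q_0'}+\tfrac1{\gamma q_0}$ together with the embedding $W^{2,1}_{q_0}\hookrightarrow W^{1,0}_{\gamma q_0}$, and close by Young's inequality. The only cosmetic differences are that the paper rescales to $\sigma=R=1$ \emph{before} running the duality (rather than inside the gradient estimate as you do) and bounds the point value $\zeta(0,0)$ via the parabolic H\"older embedding of $W^{2,1}_{q_0}$ (rather than your direct Gaussian computation); your handling of the scale-invariance of the $W^{2,1}_{q_0}\to W^{1,0}_{\gamma q_0}$ constant is a bit sketchy as written, but reducing by a further spatial dilation to $B_1\times(0,\bar s)$ with $\bar s\le 1$---exactly the paper's normalization---makes it rigorous.
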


\begin{proof} We start with the case $\sigma = R = 1$, following the same lines as \cite[Proposition 2.4]{CT}, that is
\begin{equation}\label{FPz}
\begin{cases}
\partial_t \tilde m - \Delta \tilde m - \divv(\tilde b \tilde m) = 0& \text{on $B_1 \times (0,\bar s)$} \\
m(z,s) = 0 & \text{on $\partial B_1 \times (0,\bar s)$} \\
m(0) = \delta_0,
\end{cases}
\end{equation}
for any $\bar s \in (0,1]$. We might assume that $\tilde m$ is smooth, the general case follows by approximation. Let $\zeta$ be the solution of the dual problem
\[
\begin{cases}
-\partial_t \zeta - \Delta \zeta = \tilde m^{q_0'-1}& \text{on $B_1 \times (0,\bar s)$} \\
\zeta(z,s) = 0 & \text{on $\partial B_1 \times (0,\bar s)$} \\
\zeta(z,\bar s) = 0 & \text{on $B_1$}.
\end{cases}
\]
We now employ $L^{q_0}$-parabolic regularity, and embedding theorems of $W^{2,1}_{q_0}(Q_{1,\bar s})$ into $W^{1,0}_{q}(Q_{1,\bar s})$ and $C^{\alpha_0/2}(0,\bar s; C^{\alpha_0}(B_1))$, where $q^{-1} = q_0^{-1} - (N+2)^{-1}$ and $\alpha_0 = 2 - \frac{N+2}{q_0}= 2-\gamma'$ (see \cite[Theorem IV.9.1]{LSU} and \cite[Lemma II.3.3]{LSU}, recall that $\frac{N+2}2<q_0<N+2$), to get
\begin{align}\label{embedd}
|\zeta(0,0)|  \le C'\bar s^{\alpha_0/2} \|\zeta\|_{W^{2,1}_{q_0}(Q_{1,\bar s})} \le & C\bar s^{\alpha_0/2} \|\tilde m^{q'_0-1} \|_{L^{q_0}(Q_{1,\bar s})} = C\bar s^{\alpha_0/2} \|\tilde m\|^{q'_0-1}_{L^{q'_0}(Q_{1,\bar s})} , \\
\|D\zeta\|_{L^{q}(Q_{1,\bar s})} \le C' \|\zeta\|_{W^{2,1}_{q_0}(Q_{1,\bar s})} \le & C \|\tilde m^{q'_0-1}\|_{L^{q_0}(Q_{1,\bar s})} = C \|\tilde m\|^{q'_0-1}_{L^{q'_0}(Q_{1,\bar s})} .
\end{align}
Note that $C,C'$ do not depend on $\bar s \le 1$. Then, by duality between $\tilde m$ and $\zeta$,
\[
\iint_{Q_{1,\bar s}} \tilde m^{q'_0} = |\zeta(0,0)| - \iint_{Q_{1,\bar s }} \tilde b \tilde m \cdot D\zeta.
\]
Hence, by H\"older inequality ($1=\frac 1 {\gamma'} + \frac 1 {\gamma q_0'} + \frac 1 q $),
\begin{align*}
\|\tilde m\|^{q'_0}_{L^{q'_0}(Q_{1,\bar s})} & \le C\|\tilde b \tilde m^{1/\gamma'}\|_{L^{\gamma'}(Q_{1,\bar s})} \|\tilde m^{1/\gamma}\|_{L^{\gamma q'_0}(Q_{1,\bar s})} \|D\zeta\|_{L^{q}(Q_{1,\bar s})} +  |\zeta(0,0)| \\ 
& \stackrel{\eqref{embedd}}{\le} C\left(\left( \iint_{Q_{1,\bar s}} |\tilde b|^{\gamma'} \tilde m \right)^{1/\gamma'} \|\tilde m\|^{1/\gamma}_{L^{q'_0}(Q_{1,\bar s})} + \bar s^{\alpha_0/2} \right)\|\tilde m\|^{q'_0-1}_{L^{q'_0}(Q_{1,\bar s})},
\end{align*}
and the desired inequality  \eqref{mLq0} follows by means of Young's inequality.

\smallskip

The general case, for $m$ solving \eqref{FP}, can be obtained by scaling: setting $\tilde m(z,s) = R^{N} m(Rz, R^2\sigma^{-1}s)$, $\tilde b(z,s) = R \sigma^{-1} b(Rz, R^2\sigma^{-1}s)$, and $\bar s = \tau R^{-2}\sigma \le 1$, then \eqref{FPz} holds. Therefore, by a straightforward computation
\begin{multline*}
\sigma^{1/{q_0'}} R^{N(1-1/q_0')-2/q_0'}\|m\|_{L^{q'_0}(Q_{R, \tau})} = \|\tilde m\|_{L^{q'_0}(Q_{1,\bar s})} \le C\left( \iint_{Q_{1,\bar s}} |\tilde b|^{\gamma'} \tilde m \dif z \dif s  + \bar s^{\alpha_0/2} \right) = \\
= C\left(\frac{R^{\gamma'-2}}{\sigma^{\gamma'-1}} \iint_{Q_{R, \tau}} |b|^{\gamma'}  m \dif x \dif t  + \frac{\sigma^{\alpha_0/2}}{R^{\alpha_0}}\tau^{\alpha_0/2} \right),
\end{multline*}
which yields the conclusion, since $N(1-1/q_0')-2/q_0'+2-\gamma' = 0$, $\alpha_0 = 2-\gamma'$ and $\alpha_0/2 + \gamma' - 1 = \gamma'/2$.
\end{proof}

We now state a property of $\zeta \mapsto |\zeta|^{\gamma'}$.

\begin{lemma} For any $\gamma' \in (1,2)$, there exists $C$ depending on $\gamma'$ such that
\begin{equation}\label{Ldiff}
|\zeta+\xi|^{\gamma'}-|\zeta|^{\gamma'} \le C(|\zeta|^{\gamma'-1}|\xi| + |\xi|^{\gamma'})
\end{equation}
for all $\zeta, \xi \in \R^N$.
\end{lemma}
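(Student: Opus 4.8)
The plan is to establish the pointwise inequality by a simple case distinction, according to whether $|\xi|$ dominates $|\zeta|$ or not. The starting observation is that the function $f(\zeta) := |\zeta|^{\gamma'}$ is $C^1$ on $\R^N$, with $\nabla f(\zeta) = \gamma'|\zeta|^{\gamma'-2}\zeta$ (extended by $0$ at the origin, which is legitimate since $\gamma'-1>0$), so that $|\nabla f(\zeta)| = \gamma'|\zeta|^{\gamma'-1}$ for every $\zeta$.

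In the regime $|\xi| \ge |\zeta|$, the triangle inequality gives $|\zeta+\xi| \le |\zeta|+|\xi| \le 2|\xi|$, and since $|\zeta|^{\gamma'}\ge 0$ one simply bounds
\[
|\zeta+\xi|^{\gamma'}-|\zeta|^{\gamma'} \le |\zeta+\xi|^{\gamma'} \le 2^{\gamma'}|\xi|^{\gamma'},
\]
which is already of the required form.

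In the complementary regime $|\xi| < |\zeta|$, I would write the difference via the fundamental theorem of calculus along the segment $t\mapsto \zeta+t\xi$, $t\in[0,1]$:
\[
|\zeta+\xi|^{\gamma'}-|\zeta|^{\gamma'} = \int_0^1 \nabla f(\zeta+t\xi)\cdot \xi\, dt \le \gamma'|\xi|\int_0^1 |\zeta+t\xi|^{\gamma'-1}\, dt.
\]
Since $|\zeta+t\xi| \le |\zeta|+|\xi| \le 2|\zeta|$ for all $t\in[0,1]$ and $\gamma'-1\in(0,1)$, the integrand is at most $2^{\gamma'-1}|\zeta|^{\gamma'-1}$, whence $|\zeta+\xi|^{\gamma'}-|\zeta|^{\gamma'} \le \gamma' 2^{\gamma'-1}|\zeta|^{\gamma'-1}|\xi|$. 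Taking $C = \max\{2^{\gamma'},\, \gamma' 2^{\gamma'-1}\}$ (in fact $C=4$ works for every $\gamma'\in(1,2)$) closes the argument.

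There is no genuine obstacle here; the only point deserving a word of care is the mild singularity of $\nabla f$ at the origin, which is harmless precisely because $\gamma'>1$ renders $|\zeta+t\xi|^{\gamma'-1}$ continuous (indeed bounded) along the path even when the path crosses $0$, so the integral representation and the bound on the integrand are both justified.
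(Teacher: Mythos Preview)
Your proof is correct and follows essentially the same approach as the paper: both split into the cases $|\xi|\ge|\zeta|$ and $|\xi|<|\zeta|$ and exploit the gradient bound $|\nabla(|\cdot|^{\gamma'})|=\gamma'|\cdot|^{\gamma'-1}$. In the second case the paper uses convexity at the endpoint $\zeta+\xi$ followed by a concavity step on $t\mapsto t^{\gamma'-1}$, whereas your integral representation bounds the intermediate gradients directly; this is a minor cosmetic difference, and your version is arguably cleaner (note incidentally that in your second case the segment never reaches the origin since $|\zeta+t\xi|\ge|\zeta|-|\xi|>0$, so the remark about the singularity is not even needed there).
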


\begin{proof} Since $\R^n \ni p \mapsto |p|^{\gamma'}$ is convex,
\[
|\zeta+\xi|^{\gamma'}-|\zeta|^{\gamma'} \le \gamma' |\zeta+\xi|^{\gamma'-2} (\zeta+\xi) \cdot \xi.
\]
If $|\zeta| \le |\xi|$,
\[
|\zeta+\xi|^{\gamma'}-|\zeta|^{\gamma'} \le \gamma' |\zeta+\xi|^{\gamma'-1}|\xi| \le \gamma'2^{\gamma'-1}|\xi|^{\gamma'}.
\]
Else, $|\zeta| > |\xi|$; since $(0, \infty) \ni t \mapsto t^{\gamma'-1}$ is concave, and $(0, \infty) \ni t \mapsto t^{\gamma'-2}$ is decreasing,
\begin{align*}
|\zeta+\xi|^{\gamma'}-|\zeta|^{\gamma'} & \le \gamma' (|\zeta+\xi|^{\gamma'-1}- |\zeta|^{\gamma'-1})|\xi| + \gamma'  |\zeta|^{\gamma'-1}|\xi| \\
& \le \gamma'(\gamma'-1) |\zeta|^{\gamma'-2}(|\zeta+\xi|-|\zeta|)|\xi| + \gamma'  |\zeta|^{\gamma'-1}|\xi| \\
& \le \gamma'(\gamma'-1) |\zeta|^{\gamma'-2}|\xi|^2 + \gamma'  |\zeta|^{\gamma'-1}|\xi|\\
& \le \gamma'(\gamma'-1) |\xi|^{\gamma'} + \gamma'  |\zeta|^{\gamma'-1}|\xi|.
\end{align*}
\end{proof}

We finally present a crucial result on the $W^{2,1}_q$ regularity of solutions to HJ equations enjoying suitable H\"older bounds. This is a parabolic analogue of \cite[Proposition 3.3]{CV}.

\begin{proposition}\label{prop:holdertoW2q} Let $R > 0 $, $g \in L^q(Q_{2R})$ and $v \in W^{2,1}_q(Q_{2R})$ be such that $v(0) = 0$,
\[
\|g\|_{L^q(Q_{2R})} + [v]_{\alpha; Q_{2R}} \le c_1\qquad \left|- \partial_t v - \Delta v \right| \le c_2|D v|^\gamma + g
\]
a.e. in $Q_{2R}$, for some $c_1, c_2 > 0$, and  $\alpha+\frac{N+2}{q} = 2$. Then, there exists $K$ depending on $c_1, c_2, R, N, q$ such that
\[
\|\partial_t v\|_{L^q( Q_{R})} + \|D^2 v\|_{L^q(Q_{R})}\le K.
\]
Moreover, the same statement holds if $Q_R$ and $Q_{2R}$ are replaced by cylinders $Q' \ssubset Q$ respectively.
\end{proposition}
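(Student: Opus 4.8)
The plan is to localize by a parabolic blow-up that turns the Hamiltonian term into a small perturbation, apply interior parabolic Calder\'on-Zygmund estimates, close the resulting inequality by absorption using a Gagliardo-Nirenberg interpolation inequality with a H\"older endpoint, and finally pass from small cylinders to $Q_R$ (or $Q'$) by a covering argument.

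\textbf{Rescaling.} For a point $z_0=(x_0,t_0)$ in the smaller cylinder and $\rho>0$ small (to be fixed later, depending only on the structural data), set
\[
\tilde v(y,s) = \rho^{-\alpha}\bigl(v(x_0+\rho y,\, t_0+\rho^2 s) - v(z_0)\bigr).
\]
Because $\alpha+\tfrac{N+2}{q}=2$, a direct computation gives $[\tilde v]_{\alpha;Q_1}\le c_1$, $\|\tilde v\|_{L^\infty(Q_1)}\le C c_1$, $\|\tilde g\|_{L^q(Q_1)}=\|g\|_{L^q(Q_\rho(z_0))}\le c_1$ (here $Q_\rho(z_0)$ is the parabolic cylinder of radius $\rho$ about $z_0$), and that $\tilde v$ solves
\[
\bigl|-\partial_s\tilde v-\Delta\tilde v\bigr|\le c_2\,\rho^{\delta}\,|D\tilde v|^\gamma+\tilde g \quad\text{on }Q_1,\qquad \delta:=\gamma-\frac{(N+2)(\gamma-1)}{q}=\gamma\Bigl(1-\frac{q_0}{q}\Bigr)>0,
\]
the positivity of $\delta$ being precisely where $q>q_0$ enters; for this one only needs $\rho$ small enough that $Q_{2\rho}(z_0)\subset Q_{2R}$ (resp.\ $\subset Q$).

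\textbf{The interpolation inequality.} The key ingredient, a parabolic analogue of the interpolation behind \cite[Proposition 3.3]{CV}, is
\[
\|Dw\|_{L^{\gamma q}(Q')}\le C\bigl([w]_{\alpha;Q'}+\|w\|_{L^\infty(Q')}\bigr)^{1-1/\gamma}\bigl(\|\partial_t w\|_{L^q(Q')}+\|D^2 w\|_{L^q(Q')}+\|w\|_{L^\infty(Q')}\bigr)^{1/\gamma}
\]
on parabolic cylinders $Q'$. Viewing the $\alpha$-H\"older bound as membership in a parabolic H\"older-Besov space and $W^{2,1}_q$ as ``two parabolic derivatives in $L^q$'', the interpolation exponent that places $Dw$ into $L^{\gamma q}$ with nonnegative smoothness is exactly $\nu=1/\gamma$, and this is admissible precisely when $\alpha\ge\alpha_0$, i.e.\ $q\ge q_0$. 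The decisive point is $1/\gamma<1$: raising to the power $\gamma$ bounds $\|Dw\|_{L^{\gamma q}(Q')}^\gamma$ by a constant times a \emph{first} power of $\|\partial_t w\|_{L^q}+\|D^2 w\|_{L^q}$, which is what makes absorption of the superquadratic term possible despite $\gamma>2$. Establishing this inequality (with the right lower-order terms, uniformly on cylinders of comparable size) is the step I expect to be the main obstacle; everything else is routine.

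\textbf{Closing the estimate and conclusion.} On unit cylinders $Q_{1/2}\subset Q_r\subset Q_{r'}\subset Q_1$, take a cutoff $\eta\equiv1$ on $Q_r$ supported in $Q_{r'}$, apply interior parabolic Calder\'on-Zygmund to $\eta\tilde v$, estimate $|-\partial_s\tilde v-\Delta\tilde v|\le c_2\rho^\delta|D\tilde v|^\gamma+\tilde g$ and the commutator terms, and insert the interpolation inequality together with Young's inequality to obtain
\[
\|\tilde v\|_{W^{2,1}_q(Q_r)}\le\bigl(C c_2 c_1^{\gamma-1}\rho^{\delta}+\tfrac14\bigr)\,\|\tilde v\|_{W^{2,1}_q(Q_{r'})}+C c_1^\gamma (r'-r)^{-2}.
\]
Fixing $\rho$ small enough (universally) that $C c_2 c_1^{\gamma-1}\rho^\delta\le\tfrac14$, the coefficient is $\le\tfrac12<1$, and the standard iteration lemma (if $\phi(r)\le\theta\phi(r')+A(r'-r)^{-\beta}$ for $r_0\le r<r'\le r_1$ with $\theta<1$ and $\phi$ bounded, then $\phi(r_0)\le C A(r_1-r_0)^{-\beta}$) — whose hypothesis of finiteness of $\|\tilde v\|_{W^{2,1}_q(Q_1)}$ is exactly the a priori assumption $v\in W^{2,1}_q$ — yields $\|\tilde v\|_{W^{2,1}_q(Q_{1/2})}\le C c_1^\gamma$. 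Undoing the scaling (the exponents cancel because $\alpha+\tfrac{N+2}{q}=2$) gives $\|\partial_t v\|_{L^q(Q_{\rho/2}(z_0))}+\|D^2 v\|_{L^q(Q_{\rho/2}(z_0))}\le C c_1^\gamma$ for every admissible $z_0$, with $\rho$ universal. Covering $Q_R$ (resp.\ $Q'$) by $\lesssim(R/\rho)^{N+2}$ such half-cylinders and summing the $L^q$-bounds gives the statement with $K=K(c_1,c_2,R,N,q)$ (resp.\ $K=K(c_1,c_2,Q',Q,N,q)$).
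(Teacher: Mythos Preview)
Your approach is correct, but it differs from the paper's in two structural ways that are worth noting.

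First, the paper does \emph{not} rescale to make the Hamiltonian coefficient small. Instead, it works directly at scale $R$ with nested cylinders $Q_{\sigma\rho}\subset Q_\rho$ ($R\le\rho\le 2R$), applies interior Calder\'on--Zygmund estimates, and uses the \emph{purely spatial} Gagliardo--Nirenberg inequality slice by slice in time:
\[
\|Dv(t)\|_{L^p(B_\rho)}\le C\|D^2v(t)\|_{L^q(B_\rho)}^{a}[v(t)]_{\alpha;B_\rho}^{1-a},\qquad a=1-\tfrac{q}{N+2},\quad \tfrac1p=\tfrac1q-\tfrac1{N+2}.
\]
Because $ap=q$, integrating in $t$ gives $\|Dv\|_{L^p(Q_\rho)}^p\le C\|D^2v\|_{L^q(Q_\rho)}^q$, hence
\[
\|D^2v\|_{L^q(Q_{\sigma\rho})}\le \frac{E}{(1-\sigma)^2}\,\|D^2v\|_{L^q(Q_\rho)}^{\,a\gamma},
\]
which is a \emph{sublinear} recursion since $a\gamma<1$ exactly when $q>q_0$. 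The iteration then closes without any smallness (as in \cite[Proposition~3.3]{CV}), and no covering is needed. Your route trades this sublinear iteration for the more familiar linear one $\phi(r)\le\theta\phi(r')+A(r'-r)^{-2}$ with $\theta<1$, at the price of an initial blow-up (to manufacture the small factor $\rho^\delta$) and a final covering step.

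Second, your ``main obstacle'', the parabolic interpolation inequality with exponent $1/\gamma$, is in fact avoidable: the slicewise spatial Gagliardo--Nirenberg above is classical (Nirenberg 1966) and already yields the needed control, with the even better exponent $a\gamma<1$ on the $W^{2,1}_q$ side. A related subtlety is that when $\alpha=2-\tfrac{N+2}{q}$, the two parabolic endpoints $C^{\alpha,\alpha/2}$ and $W^{2,1}_q$ share the same parabolic Sobolev index, so direct parabolic interpolation between them is somewhat degenerate; the slicewise trick sidesteps this entirely. In short: your argument works, but the paper's path is shorter and relies only on standard tools.
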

\begin{proof} First, since $[v]_{\alpha; Q_{2R}} \le c_1$ and $v(0) = 0$,
\begin{equation}\label{eq:boundwn1}
\|v\|_{L^q(Q_{2R})}\le c_1 C  R^{\alpha+\frac{N+2}{q}} = c_1 C R^2.
\end{equation}
Let $R \le \rho \le 2R$ and $0 < \sigma < 1$. Arguing as in the proof of \cite[Theorem 7.22]{Lieberman}, and using H\"older's inequality and \eqref{eq:boundwn1}
\begin{equation}\label{eq:key_interm}
\begin{split}
\|D^2 v\|_{L^q(Q_{\sigma \rho})}&\le \frac{C}{(1-\sigma)^2R^2}\left(
R^2 \|  c_2|D v|^\gamma + g \|_{L^q(Q_{\rho})} + \|  v \|_{L^q(Q_{\rho})}
\right)\\
&\le \frac{C}{(1-\sigma)^2}\left(
R^{\gamma-\frac{N+2}{q}(\gamma-1)}\|D v\|_{p;Q_\rho}^\gamma+1
\right).
\end{split}
\end{equation}
where $\frac1p = \frac1q - \frac1{N+2}$. By the Gagliardo-Nirenberg interpolation inequality \cite{Nirenberg_1966}, there exists
$C$ (independent of $ \rho$) such that for a.e. $t \in (0, \rho^2)$,
\[
\|Dv(t)\|_{p ;B_\rho} \le C\left(\|D^2 v(t)\|_{L^q(B_\rho)}^a[v(t)]_{\alpha;B_\rho}^{1-a} + [v(t)]_{\alpha;B_\rho} \right),
\qquad \text{where }a=1-\frac{q}{N+2}<\frac{1}{\gamma} .
\]
Since $ap = q$, if we raise the previous inequality to the power $p$ and integrate in time we obtain
\[
\|D v\|^p_{p;Q_\rho}\le C (\|D^2 v\|_{L^q(Q_\rho)}^q  +  R^2).
\]
If $\|D^2 v\|_{L^q(Q_R)}^q  \le  R^2$, then there is nothing else to prove, since $\|D^2 v\|_{q,Q_R}$ is bounded, and a control on $\|\partial_t v\|_{q,Q_R}$ follows immediately by integrating the differential inequality for $v$.
Otherwise, plugging the resulting inequality into \eqref{eq:key_interm} implies
\begin{equation}\label{eq:smallvsbig}
\|D^2 v\|_{L^q(Q_{\sigma \rho})} \le  \frac{C}{(1-\sigma)^2}\left(R^{\gamma-\frac{N+2}{q}(\gamma-1)} 
\|D^2 v\|^{a\gamma}_{L^q(Q_\rho)}+1
\right).
\end{equation}
\smallskip

As before, if $R^{\gamma-\frac{N+2}{q}(\gamma-1)} 
\|D^2 v\|^{a\gamma}_{L^q(Q_R)} \le 1$, then we are done. Otherwise, \eqref{eq:smallvsbig} yields, for every $R\le \rho \le
2R$ and $0<\sigma<1$,
\begin{equation}\label{eq:iter_1}
\|D^2 v\|_{L^q(Q_{\sigma \rho})}\le\frac{ E}{(1-\sigma)^2}
\|D^2 v\|_{L^q(Q_\rho)}^{a\gamma} \ ,
\end{equation}
where $E =CR^{\gamma-\frac{N+2}{q}(\gamma-1)}$, and $C$ is independent of $\rho,\sigma$. We may then conclude following exactly the same lines as in the proof of \cite[Proposition 3.3]{CV}; the statement with $Q', Q$ follows by a standard covering argument.
\end{proof}

\small

\begin{thebibliography}{10}

\bibitem{CarCan}
P.~Cannarsa and P.~Cardaliaguet.
\newblock H\"{o}lder estimates in space-time for viscosity solutions of
  {H}amilton-{J}acobi equations.
\newblock {\em Comm. Pure Appl. Math.}, 63(5):590--629, 2010.

\bibitem{CPT}
P.~Cardaliaguet, A.~Porretta, and D.~Tonon.
\newblock Sobolev regularity for the first order {H}amilton-{J}acobi equation.
\newblock {\em Calc. Var. Partial Differential Equations}, 54(3):3037--3065,
  2015.

\bibitem{CSil}
P.~Cardaliaguet and L.~Silvestre.
\newblock H\"{o}lder continuity to {H}amilton-{J}acobi equations with
  superquadratic growth in the gradient and unbounded right-hand side.
\newblock {\em Comm. Partial Differential Equations}, 37(9):1668--1688, 2012.

\bibitem{CG2}
M.~Cirant and A.~Goffi.
\newblock Lipschitz regularity for viscous {H}amilton-{J}acobi equations with
  {$L^p$} terms.
\newblock {\em Ann. Inst. H. Poincar\'{e} Anal. Non Lin\'{e}aire},
  37(4):757--784, 2020.

\bibitem{CGpar}
M.~Cirant and A.~Goffi.
\newblock Maximal {$L^q$}-regularity for parabolic {H}amilton-{J}acobi
  equations and applications to mean field games.
\newblock {\em Ann. PDE}, 7(2):Paper No. 19, 40, 2021.

\bibitem{CG4}
M.~Cirant and A.~Goffi.
\newblock On the problem of maximal ${L}^q$-regularity for viscous
  {H}amilton-{J}acobi equations.
\newblock {\em Arch. Rational Mech. Anal.}, 2021.
\newblock doi:10.1007/s00205-021-01641-8.

\bibitem{CT}
M.~Cirant and D.~Tonon.
\newblock Time-dependent focusing mean-field games: the sub-critical case.
\newblock {\em J. Dynam. Differential Equations}, 31(1):49--79, 2019.

\bibitem{CV}
M.~Cirant and G.~Verzini.
\newblock Local {H}\"older and maximal regularity of solutions of elliptic
  equations with superquadratic gradient terms.
\newblock arXiv:2203.06092, 2022.

\bibitem{DAP}
A.~Dall'Aglio and A.~Porretta.
\newblock Local and global regularity of weak solutions of elliptic equations
  with superquadratic {H}amiltonian.
\newblock {\em Trans. Amer. Math. Soc.}, 367(5):3017--3039, 2015.

\bibitem{Evans}
L.~C. Evans.
\newblock Adjoint and compensated compactness methods for {H}amilton-{J}acobi
  {PDE}.
\newblock {\em Arch. Ration. Mech. Anal.}, 197(3):1053--1088, 2010.

\bibitem{G}
A.~Goffi.
\newblock On the optimal ${L}^q$-regularity for viscous {H}amilton-{J}acobi
  equations with sub-quadratic growth in the gradient.
\newblock arXiv:2112.02676, 2021.

\bibitem{GoffiFrac}
A.~Goffi.
\newblock Transport equations with nonlocal diffusion and applications to
  {H}amilton-{J}acobi equations.
\newblock {\em J. Evol. Equ.}, 21(4):4261--4317, 2021.

\bibitem{GP}
A.~Goffi and F.~Pediconi.
\newblock Sobolev regularity for nonlinear {P}oisson equations with {N}eumann
  boundary conditions on {R}iemannian manifolds.
\newblock arXiv:2110.15450, 2021.

\bibitem{Gomesbook}
D.~A. Gomes, E.~A. Pimentel, and V.~Voskanyan.
\newblock {\em Regularity theory for mean-field game systems}.
\newblock SpringerBriefs in Mathematics. Springer, [Cham], 2016.

\bibitem{LSU}
O.~A. Ladyzenskaja, V.~A. Solonnikov, and N.~N. Ural'tseva.
\newblock {\em Linear and quasilinear equations of parabolic type}.
\newblock Translated from the Russian by S. Smith. Translations of Mathematical
  Monographs, Vol. 23. American Mathematical Society, Providence, R.I., 1968.

\bibitem{Lieberman}
G.~M. Lieberman.
\newblock {\em Second order parabolic differential equations}.
\newblock World Scientific Publishing Co. Inc., River Edge, NJ, 1996.

\bibitem{Lions85}
P.-L. Lions.
\newblock Quelques remarques sur les probl\`emes elliptiques quasilin\'{e}aires
  du second ordre.
\newblock {\em J. Analyse Math.}, 45:234--254, 1985.

\bibitem{Nirenberg_1966}
L.~Nirenberg.
\newblock An extended interpolation inequality.
\newblock {\em Ann. Scuola Norm. Sup. Pisa Cl. Sci. (3)}, 20:733--737, 1966.

\bibitem{SP}
L.~A. Peletier and J.~Serrin.
\newblock Gradient bounds and {L}iouville theorems for quasilinear elliptic
  equations.
\newblock {\em Ann. Scuola Norm. Sup. Pisa Cl. Sci. (4)}, 5(1):65--104, 1978.

\bibitem{PQS}
P.~Pol\'{a}\v{c}ik, P.~Quittner, and P.~Souplet.
\newblock Singularity and decay estimates in superlinear problems via
  {L}iouville-type theorems. {II}. {P}arabolic equations.
\newblock {\em Indiana Univ. Math. J.}, 56(2):879--908, 2007.

\bibitem{PSou}
A.~Porretta and P.~Souplet.
\newblock Blow-up and regularization rates, loss and recovery of boundary
  conditions for the superquadratic viscous {H}amilton-{J}acobi equation.
\newblock {\em J. Math. Pures Appl. (9)}, 133:66--117, 2020.

\bibitem{SZ}
P.~Souplet and Q.~S. Zhang.
\newblock Global solutions of inhomogeneous {H}amilton-{J}acobi equations.
\newblock {\em J. Anal. Math.}, 99:355--396, 2006.

\bibitem{SZ06}
P.~Souplet and Q.~S. Zhang.
\newblock Sharp gradient estimate and {Y}au's {L}iouville theorem for the heat
  equation on noncompact manifolds.
\newblock {\em Bull. London Math. Soc.}, 38(6):1045--1053, 2006.

\bibitem{SV18}
L.~F. Stokols and A.~F. Vasseur.
\newblock De {G}iorgi techniques applied to {H}amilton-{J}acobi equations with
  unbounded right-hand side.
\newblock {\em Commun. Math. Sci.}, 16(6):1465--1487, 2018.

\end{thebibliography}

\medskip
\small
\begin{flushright}
\noindent Marco Cirant\\
Dipartimento di Matematica ``Tullio Levi-Civita'', Universit\`a di Padova\\
Via Trieste 63, 35121 Padova, Italy\\
\verb"cirant@math.unipd.it"
\end{flushright}

\end{document}